\begin{document}

\pagenumbering{arabic}
\newcounter{comp1}

\newtheorem{definition}{Definition}[section]
\newtheorem{proposition}{Proposition}[section]
\newtheorem{example}{Example}[section]
\newtheorem{method}{Method}[section]
\newtheorem{lemma}{Lemma}[section]
\newtheorem{theorem}{Theorem}[section]
\newtheorem{corollary}{Corollary}[section]
\newtheorem{application}{Application}[section]
\newtheorem{assumption}{Assumption}
\newtheorem{algorithm}{Algorithm}[section]
\newtheorem{remark}{Remark}[section]
\newcommand{\fig}[1]{\begin{figure}[hbt]
                  \vspace{1cm}
                  \begin{center}
                  \begin{picture}(15,10)(0,0)
                  \put(0,0){\line(1,0){15}}
                  \put(0,0){\line(0,1){10}}
                  \put(15,0){\line(0,1){10}}
                  \put(0,10){\line(1,0){15}}
                  \end{picture}
                  \end{center}
                  \vspace{.3cm}
                  \caption{#1}
                  \vspace{.5cm}
                  \end{figure}}
\newcommand{\Axk}{A(x^k)}
\newcommand{\Aumb}{\sum_{i=1}^{N}A_{i}u_{i}-b}
\newcommand{\Kk}{K^k}
\newcommand{\Kki}{K_{i}^{k}}
\newcommand{\Aukmb}{\sum_{i=1}^{N}A_{i}u_{i}^{k}-b}
\newcommand{\Au}{\sum_{i=1}^{N}A_{i}u_{i}}
\newcommand{\Aukpmb}{\sum_{i=1}^{N}A_{i}u_{i}^{k+1}-b}
\newcommand{\nab}{\nabla^2 f(x^k)}
\newcommand{\xk}{x^k}
\newcommand{\ubk}{\overline{u}^k}
\newcommand{\uhk}{\hat u^k}
\def\QEDclosed{\mbox{\rule[0pt]{1.3ex}{1.3ex}}} 
\def\QEDopen{{\setlength{\fboxsep}{0pt}\setlength{\fboxrule}{0.2pt}\fbox{\rule[0pt]{0pt}{1.3ex}\rule[0pt]{1.3ex}{0pt}}}}
\def\QED{\QEDopen} 
\def\proof{\par\noindent{\em Proof.}}
\def\endproof{\hfill $\Box$ \vskip 0.4cm}
\newcommand{\RR}{\mathbf R}

\title {\bf
A Variational Approach on Level sets and Linear Convergence of Variable Bregman Proximal Gradient Method for Nonconvex Optimization Problems}
\author{Daoli Zhu\thanks {Antai College of Economics and Management and Sino-US Global Logistics
Institute, Shanghai Jiao Tong University, Shanghai, China({\tt
dlzhu@sjtu.edu.cn})}, Sien Deng\thanks {Department of Mathematical Sciences, Northern Illinois University, DeKalb, IL, USA({\tt sdeng@niu.edu})}}
\footnotetext[1]{Acknowledgments: this research was supported by NSFC:71471112 and NSFC:71871140}

\maketitle

\begin{abstract}
We develop a new variational approach on level sets aiming towards convergence rate analysis of a variable Bregman proximal gradient (VBPG) method for a broad class of nonsmooth and nonconvex optimization problems. With this new approach, we are able to extend the concepts of Bregman proximal mapping and their corresponding Bregman proximal envelops, Bregman proximal gap function to  nonconvex setting. Properties of these mappings and functions are  carefully examined. An aim of  this work is to provide a solid foundation on which further design and analysis of  VBPG for
more general nonconvex optimization problems are possible. Another aim is to  provide a unified theory on linear convergence of VBPG with a particular interest towards proximal gradient methods. Central to our analysis for achieving the above goals is
an error bound in terms of level sets and subdifferentials (level-set subdifferential error bound)
  along with its links to other level-set error bounds.  As a consequence,
we have established a number of positive results. The newly established  results not only enable us to show that any accumulation of the sequence generated by VBPG is at least a critical point of the limiting subdifferential or even a critical point of the proximal subdifferential with a fixed Bregman function in each iteration, but  also provide a fresh perspective that allows us to explore   inner-connections among many known sufficient conditions for linear convergence of various first-order methods.  Along the way, we are able to derive a number of  verifiable conditions for level-set error bounds to hold, obtain  linear convergence of VBPG, and derive necessary conditions and sufficient conditions for
linear convergence relative to a level set for nonsmooth and nonconvex optimization problems.

\vspace{1cm}
\noindent {\bf Keywords:} Level-set subdifferential error bound, Variable Bregman proximal gradient method, Linear convergence, Variational approach, Bregman proximal error bound, Nonsmooth nonconvex optimization, Calmness, Metric subregularity, Weak metric-subregularity, Linear convergence relative to a level set, Level-set based error bounds

%

\end{abstract}
\normalsize
\vspace{1cm}
\section{Introduction}
This paper studies the following nonconvex and nonsmooth optimization problem:
\begin{equation}\label{Prob:general-function}
\mbox{{\rm(P)}}\qquad\min_{x\in\RR^n}\qquad F(x)=f(x)+g(x)
\end{equation}
where $f:\RR^n\rightarrow(-\infty,\infty]$ is a proper lower semi-continuous (l.s.c) function that is smooth (may be nonconvex) in $\mathbf{dom}f$,  and $g:~\RR^n\rightarrow(-\infty, \infty]$ is a proper l.s.c  possible nonconvex and nonsmooth function. We say that (P) a convex problem (a fully nonconvex problem) if
both $f$ and $g$ are convex (both $f$ and $g$ are nonconvex).\\

Problem (P) arises naturally  in diverse areas such as compressed sensing~\cite{CandesTao05,Donoho06}, machine learning and statistics~\cite{Tibshirani1996}.  In such settings, $f$ can be viewed as  the data fitting part and
$g$  can be used to preserve structures such as sparsity, low-rankness, etc., to solutions of (P).  Typically these problems are of large scale. As a consequence, first-order methods and their enhanced versions are viewed to be a practical way to solve (P) with a huge number of decision variables~\cite{LionsMercier1979,Cohen80,Nesterov13}.\\

In this paper, by incorporating a Newton-like approach in each iteration, we propose to solve (P) by a general variable Bregman proximal gradient (VBPG) method. Based on the pioneering work ( Auxiliary Principle Problem) of Cohen~\cite{Cohen80}, an  iterative scheme of the VBPG method  for (P) can be stated as follows
\begin{equation}
x^{k+1}\in\arg\min_{x\in\RR^n}\bigg{\{}\langle\nabla f(x^k),x-x^k\rangle+g(x)+\frac{1}{\epsilon^k}D^k(x^k,x)\bigg{\}},
\end{equation}
where $D^k$ is a variable Bregman distance  (see Section 2.1 for the definition of a Bregman distance). The classical proximal gradient (PG) method is simply the choice of  $D^k(x,y)=\frac{1}{2}\|x-y\|^2$.
Other useful choices of variable proximal distance like functions can be obtained by choosing $D^k(x,x^k)=\frac{1}{2}\|x-x^k\|_{A^k}^2$ for Newton-like methods, where $A^k$ is an approximation of the Hessian $\nabla^2f(x^k)$ or a diagonal matrix. The second-order information through $D^k$ can be used to enhance the rate of  convergence  of the method. See \cite{Bonettini2016,Bonettini2016computing,Chouzenous2014} for details. Another choice is the Jacobi regularization method with
$$D^k(x,x^k)=\sum\limits_{i=1}^N [f\big{(}R_i^k(x)\big{)}+\frac{1}{2}\|x_i-x_i^k\|_{B_i}^2],$$ where $R_i^k\triangleq(x_1^{k},...,x_{i-1}^{k},x_i,x_{i+1}^k,...,x_N^k)$, and $B_i$ are positive definite matrices \cite{Banjac18}. The VBPG method can also be combined with extrapolation, proximal alternating linearization (see Algorithm 2.2 , Algorithm 3.1 of \cite{Cohen80}) and line search process (see  \cite{Zhu95}). The VBPG method for the general nonsmooth case is investigated in \cite{CohenZ}.\\

From a historical  and  broad view point, theory of error bounds (EB) has long been known playing an important role in
optimization theory~\cite{Rockafellar, Mordukhovich}, and a central role in the convergence and convergence rate analysis  of various iterative  methods~\cite{Pan97}. In fact well-known notions in variational analysis, such as  calmness, metric regularity, and submetric regularity to name a few, are all defined  in
terms of error bounds. As we are interested in finding an optimal solution, or a critical point, or an optimal value for (P),  it is very natural to look at the following  types of   error bounds: the first type EB is an inequality that bounds the {\em distance from a set of test points to
a target set} (e.g., critical-point set of (P), optimal solution set of (P), or a level set of $F$) by
a {\em residual function}; while the second type  EB is an inequality that bounds certain {\em absolute values of the difference between function  $F$ values at a set of test points and  a target value} (e.g., a critical value of $F$, or the optimal value of (P)) by a {\em residual function}.
These inequalities are evidently very useful in the convergence and rate convergence analysis of iterative optimization methods. In particular, when such inequalities are known with computable residual functions, they provide valuable quantitative information about the iterates by iterative optimization methods, and form a basis for convergence analysis,
convergence rate analysis, and finite termination criteria for these iterative methods.  In this regard, prominent examples in optimization of first type error bounds include  Hoffman's error bound~\cite{Hoffman1952}, sharp minimum~\cite{Cro78, Pol79},
weak sharp minima~\cite{BuF93, BuD02,StW99} (which will be termed level-set sharpness error bound in the latter sections of this paper), Robinson's error bound on polyhedral multifunctions~\cite{Rob81}.
Pioneering  contributions to second type error bounds include Polyak~\cite{Pol63} and {\L}ojasiewicz inequality~\cite{Loj63} although the latter inequality is not given in the context of optimization.\\

 An aim of this work is to build a comprehensive error-bound based mathematical theory
 for problem (P) with a particular interest  on applications to convergence rate analysis of VBPG.   As a general PG method is a special version of the VBPG method, a  brief review of literature on existing results on  convergence rate analysis of the PG method  is in order.

 It has been known that various first-order methods for (P) being a convex problem
exhibit a  converge rate of $O(1/k)$ or $O(1/k^2)$ sublinear rates~\cite{Bonettini2016,Bonettini2016computing,Nesterov13}. If  it is further assumed that $f$ is strongly convex and $g$ is convex, then it has been proved  that  PG methods can achieve a global linear convergence rate    in terms of  sequences of objective function values~\cite{Cohen17}. However,  strongly convexity is too restrictive to be satisfied in many practical problems.  To remediate this challenge and find weaker alternatives  that
are sufficient to obtain linear convergence of PG and acceleration techniques, \cite{Necoara2018} has introduced several relaxations of strong convexity, and \cite{Schopfer2016}~has  proposed a restricted scant inequality (RSI). \\

 Recently there is  a surge of interest in developing some first type error bound (EB) conditions that guarantee linear convergence for PG. A sample of such works includes Luo-Tseng  EB~\cite{Lewis2018, LuoTseng92}, quadratic growth condition~\cite{Lewis2018}, and metric subregularity condition~\cite{Lewis2018, YeJ18}.

As evident from the early work of \cite{Pol63},
second type EB conditions are also very useful.  A recent  success in optimization community is on various generalizations of the work
\cite{Pol63} with the aid of   the so called  Kurdyka-{\L}ojasiewicz (KL) property  to  obtain linear convergence of PG methods as well as a variety of other optimization methods~\cite{Attouch13,Chouzenous2014,Frankel2015}. ~\cite{Bolte2010} shows that the proximal gradient method has a linear convergence rate for functions satisfying KL property with exponent $\frac{1}{2}$; \cite{LiPong18} studies various calculus rules for KL exponents and illustrates that the Luo-Tseng error bound along with a  proper separation condition is sufficient for   $F$ being  a KL function with an exponent of $\frac{1}{2}$; \cite{Schmidt2016} proposes a proximal-PL inequality that leads to an elegant linear convergence rate analysis for sequences of function-values generated by the PG method. We remark that the proximal-PL inequality condition combines and extends an idea originated from  metric functions for variational inequalities (VI) by reformulating a VI as a constrained continuous differentiable optimization problem through certain gap functions, see~\cite{Zhu94,Zhu98}.\\

More recently,
there are two major lines of research on  error bound conditions to achieve linear convergence guarantee for gradient descent methods. The first line of research is to find  connections among existing error bound conditions. For the nonsmooth convex problem (P),  \cite{YeJ18} establishes the equivalence of metric subregularity, proximal error bound, KL property and quadratic growth conditions;~\cite{Schmidt2016} gives the equivalence of proximal-PL inequality with KL property and the proximal error bound condition. One of the important works in this regard is \cite{Lewis2018}, which studies the relationships among various EB conditions;
\cite{Zhang2019} introduces a set of abstract error bound conditions as well as  an abstract gradient  method.
Under the assumption of  (cor-res-EB) condition,  \cite{Zhang2019} establishes  that the (cor-EB) condition is a necessary and sufficient condition for linear convergence. In \cite{Zhang2019}, convex examples are given to illustrate these conditions.
As  nonconvex sparsity learning  problems have  received considerable attention in recent years,
another major line of  research is the study of (P) when (P) is fully nonconvex (by which we mean both $f$ and $g$
are nonconvex).  When the nonconvex sparsity-inducing penalties are introduced through the term $g$ of problem (P),  a challenge is how to develop efficient algorithms to solve these full nonconvex optimization problem with  large scale data.
\cite{Lewis2018} provides  new convergence analysis for the fully nonconvex composite optimization problem with the term $h(c(\cdot))$, where $c\in C^1$ (see Section 9 of \cite{Lewis2018}).~\cite{Ye18} uses a perturbation technique to study linear convergence  of the PG method for full nonconvex problem (P) under calmness as well as various equivalent conditions to calmness.\\

Motivated  by the above mentioned works for a quest for linear convergence of the PG method for (P), we are led to ask the following basic questions:
What are fundamental properties associated with
$F$ itself so that linear  convergence  of  VBPG is guaranteed? Could  a property possibly weaker than  K-L property
exist to ensure  linear convergence of VBPG? As well understood in variational analysis, properties of a function relate very naturally to the level sets  of the function. This leads us to look into  error bounds involving  level sets, subdifferentials  and  various  level-set  error bounds. A significant departure of our work to the above cited works is the use of  level sets as target sets (see the definition of first type error bounds)  to establish error bound conditions whereas the above cited works  typically use optimal solution sets or sets of critical points (in the  nonconvex case) as target sets to establish error bound conditions. The title of this work
reflects our level-set based perspective for this study.  As a result,  we have discovered a number of interesting  results  on level sets of
$F$,  revealed the roles of  level-set based  error bounds in establishing   linear convergence of VBPG, and uncovered  interconnections among level-set based  error bounds and other known error bounds  in  the literature.

A goal of this work is to
find the weakest possible conditions on $F$ in terms of error bounds  under which we are able to arrive  at  linear convergence of VBPG.  Specifically, such  conditions should  meet the following requirements
\begin{itemize}
\item[(i)] In the fully nonconvex setting (i.e., both $f$ and $g$ are nonconvex), the conditions  are sufficient  for $Q-$linear convergence of $\{F(x^k)\}$ and $R-$linear convergence of $\{x^k\}$ generated by VBPG. Moreover, all known sufficient conditions  for linear convergence
of PG methods  imply the conditions.

\item[(ii)] The conditions along with their associated theorems will provide a unique perspective that allows us to make  connections with many known conditions in the literature which are shown to guarantee the linear convergence of PG.
\end{itemize}

In this work, we provide an answer to the about questions in terms of error bounds involving level sets and subdifferentials and will illustrate that these error bound conditions have met the above requirements.
In addition to the above contributions, we also provide necessary conditions and sufficient conditions for linear convergence with respect to level sets for VBPG.
Furthermore, in Proposition~7.2, we have shown that the notion of level-set subdifferential EB condition is much weaker than that of the so called KL property.
To our knowledge, this work  is the first comprehensive work on  convergence rate analysis of VBPG. Moreover, a number of new results obtained in this work for VBPG are also new results even for the PG method. We believe  tools and concepts used in our analysis
for VBPG could be readily
adopted  for convergence rate analysis of more broad
classes of iterative optimization methods.

As this comprehensive work examines and establishes numerous error bound conditions and results,  we supply  Figure 1 in Section~ \ref{sec:level-set type error relationships} and Figure 2 in Section~\ref{sec:sufficient}
 to aid the reader to see easily inner relationships of these conditions and results.

The remaining part of this paper is structured as follows. Section~\ref{sec:pre} provides notation and preliminaries.  Section~\ref{sec:level-set} introduces level-set analysis, and studies level-set type error bounds. Section~\ref{sec:convergence} and section~\ref{sec:convergence rate} present the results on  convergence and linear convergence  analysis for VBPG respectively. Section 6 provides enhanced analysis under strong level-set error bounds and semiconvexity of $g$. Section \ref{sec:level-set type error relationships}
 investigates connections of various level-set  error bounds established in this work with other existing error bounds.
Section~\ref{sec:sufficient} lists known  sufficient conditions to guarantee the existence of level-set subdifferential error bounds. \\
\section{Notations and preliminaries}\label{sec:pre}
Throughout this paper, $\langle\cdot,\cdot\rangle$ and $\|\cdot\|$ denote the Euclidean scalar product of $\RR^n$ and its corresponding norm respectively. Let $\mathbf{C}$ be a subset of $\RR^n$ and $x$ be any point in $\RR^n$. Define
$$dist(x,\mathbf{C})=\inf\{\|x-z\|:z\in\mathbf{C}\}.$$
When $\mathbf{C}=\emptyset$, we set $dist(x,\mathbf{C})=\infty$.

The definitions we will use throughout the paper on subdifferential calculus are  standard in variational analysis (\cite{Rockafellar} and \cite{Mordukhovich}).
\begin{definition}[\cite{Rockafellar}]
Let $\psi$: $\RR^n\rightarrow\RR\cup\{+\infty\}$ be a proper lsc function.
\begin{itemize}
\item[{\rm(i)}] The domain of $\psi$, denoted by $\mathbf{dom}~ \psi$, is $\{x\in\RR^n:\psi(x)<+\infty\}$.
\item[{\rm(ii)}] For each $\overline{x}\in\mathbf{dom}~\psi$, the Fr\'echet subdifferential of $\psi$ at $\overline{x}$, written $\partial_{F}\psi(\overline{x})$, is the set of vectors $\xi\in\RR^n$, which satisfy
    $$\lim\limits_{\substack{y\neq x\\ y\rightarrow x}}\inf\frac{1}{\|x-y\|}[\psi(y)-\psi(\overline{x})-\langle\xi,y-x\rangle]\geq0.$$
    If $x\notin\mathbf{dom} \psi$, then $\partial_F\psi=\emptyset$.
\item[{\rm(iii)}] The limiting-subdifferential (\cite{Mordukhovich}), or simply the subdifferential for short, of $\psi$ at $\overline{x}\in\mathbf{dom}~\psi$, written $\partial_L\psi(\overline{x})$, is defined as follows:
    $$\partial_L\psi(\overline{x}):=\{\xi\in\RR^n:\exists x_n\rightarrow x, \psi(x_n)\rightarrow \psi(\overline{x}), \xi_n\in\partial_F\psi(x_n)\rightarrow\xi\}.$$
\item[{\rm(iv)}] The proximal subdifferential of $\psi$ at $\overline{x}\in\mathbf{dom}\psi$ written $\partial_P\psi(\overline{x})$, is defined as follows:
    $$\partial_P\psi(\overline{x}):=\{\xi\in\RR^n:\exists\rho>0, \eta>0\quad\mbox{s.t.}\quad \psi(x)\geq\psi(\overline{x})+\langle\xi,x-\overline{x}\rangle-\rho\|x-\overline{x}\|^2, \forall x\in\mathbb{B}(\overline{x};\eta)\}.$$
    where $\mathbb{B}(\overline{x}; \eta)$ is the open ball of radius $\eta>0$, centered at $\overline{x}$.
\end{itemize}
\end{definition}
\begin{definition}[\cite {Bacak10,Bernard05,Bolte2010,Rockafellar,Vial1983}]
Let $\psi:\RR^n\rightarrow(-\infty,\infty]$ be a proper lsc function.
\begin{itemize}
\item[{\rm(i)}]  (Definition~13.27 of \cite{Rockafellar}) A lsc function $\psi$ is said to be prox-regular at $\overline{x}\in \mathbf{dom}~\psi$ for subgradient $\overline{\nu}\in\partial_L\psi(\overline{x})$, if there exist parameters $\eta>0$ and $\rho\geq0$ such that for every point $(x,\nu)\in gph\partial\psi$ obeying $\|x-\overline{x}\|<\eta$, $|\psi(x)-\psi(\overline{x})|<\eta$, and $\|\nu-\overline{\nu}\|<\eta$ and $\nu\in\partial_L\psi(x)$, one has
    \begin{equation}
    \psi(x')\geq\psi(x)+\langle\nu,x'-x\rangle-\frac{\rho}{2}\|x'-x\|^2,\qquad\mbox{for all}\qquad x'\in\mathbb{B}(\overline{x};\eta)
    \end{equation}
\item[{\rm(ii)}] (Proposition~3.3 of \cite{Bernard05})~ A lsc function $\psi$ is said to be uniformly prox-regular around $\overline{x}\in \mathbf{dom}~\psi$ , if there exist parameters $\eta>0$ and $\rho\geq0$ such that for every point $x, x'\in \mathbb{B}(\overline{x};\eta)$ and $\nu\in\partial_L\psi(x)$, one has
    \begin{equation}\label{eq:varphi-0}
    \psi(x')\geq\psi(x)+\langle\nu,x'-x\rangle-\frac{\rho}{2}\|x'-x\|^2.
    \end{equation}
\item[{\rm(iii)}] (Definition~10 of \cite{Bolte2010})~A lsc function $\psi$ is semi-convex on $\mathbf{dom}~\psi$ with modulus $\rho>0$ if there exists a convex function $h:\RR^n\rightarrow\RR$ such that $\psi=h(x)-\frac{\rho}{2}\|x\|^2$, one has
\begin{equation}\label{eq:varphi}
\psi(y)\geq\psi(x)+\langle\xi,y-x\rangle-\frac{\rho}{2}\|x-y\|^2, \forall\xi\in\partial_L\psi(x).
\end{equation}
\item[{\rm(iv)}] (Theorem~10.33 of \cite{Rockafellar})~A lsc function $\psi$ is lower-$C^2$ on an open set $V$ if at any point $x$ in $V$, $\psi$ plus a convex quadratic function is a convex function on an open neighborhood $V'$ of $x$.
\end{itemize}
\end{definition}
It is well-known  that the following relations hold for a given proper lsc $\psi$.
\begin{eqnarray*}
\mbox{Convexity}\Rightarrow\mbox{Semi-convexity}\Rightarrow\mbox{Lower-$C^2$ on any open set}&\Rightarrow&\mbox{uniform prox-regularity around $x\in V$}\\
&\Rightarrow&\mbox{prox-regularity at all $x\in V$}
\end{eqnarray*}
Nonconvex regularization terms such as the smoothly clipped absolute derivation (SCAD)~\cite{Fan2001} and the minimax concave penalty (MCP)~\cite{Zhang2010} are examples of semi-convex functions.\\
For subdifferentials,  the following inclusions hold: $\partial_P\psi(x)\subset\partial_{F}\psi(x)\subset\partial_L\psi(x)$.  
If $\psi$ is uniformly prox-regular around $\overline{x}$ on $\mathbb{B}(\overline{x}; \eta)$ with $\eta>0$, we have $\partial_P\psi(x)=\partial_L\psi(x)$ for all $x\in\mathbb{B}(\overline{x}; \eta)$. In particular, $\partial_P\psi(x)=\partial_L\psi(x)$
 if $\psi$ is a semi-convex (convex) function.

Throughout the rest of  this paper, we make the following assumption on $f$ and $g$.
\begin{assumption}[H$_1$]\label{assump1}
\begin{itemize}
\item[{\rm(i)}] $f:\RR^n\rightarrow (-\infty,\infty]$ is a nonconvex differentiable function with $\mathbf{dom}~f$ convex and with its gradient $\nabla f$ being $L-$Lipschitz continuous on
$\mathbf{dom}~f$.
\item[{\rm(ii)}] $g$ is continuous on $\mathbf{dom}~g$, and $\mathbf{dom}~g$ is a convex set.
\item[{\rm(iii)}] $F$ is level-bounded i.e., the set $\{x\in\RR^n|F(x)\leq r\}$ is bounded (possibly empty) for every $r\in\RR$.
\end{itemize}
\end{assumption}
A few remarks about Assumption 1 are in order.
By Theorem 3.2.12 of \cite{Ortega}, the following descent property of $f$  holds
\begin{equation*}
\frac{L}{2}\|y-x\|^2+\langle\nabla f(x),y-x\rangle\geq f(y)-f(x)~~\hspace{4mm}\forall x,y\in \mathbf{dom}~f.
\end{equation*}
From {\rm{(i)} and {\rm{(ii)}}, $\mathbf{dom}~F$ is a convex set. In addition,
as a consequence of {\rm{(iii)},  the optimal value $F^*$ of (P) is finite and the optimal solution set $\mathbf{X}^*$ of (P) is non-empty.

For problem ($P$), by Exercise 2.3 of \cite{Clarke98}, if  $x\in\RR^n$ is a local minimizer of $F$, then
\begin{equation}\label{eq:pF}
0\in\partial_P F(x).
\end{equation}
A point satisfying~\eqref{eq:pF} is called a proximal critical point. The set of all proximal critical points of $F$ is denoted by $\overline{\mathbf{X}}_P$. By Assumption~\ref{assump1}, a global minimizer exists for problem (P). Hence, we have that $\overline{\mathbf{X}}_P\neq\emptyset$ since $\overline{\mathbf{X}}_P$ contains all global minimizers of (P). For the limiting subdifferential case, we can define the limiting critical point defined as:
$$\overline{\mathbf{X}}_L:=\{x|0\in\nabla f(x)+\partial_L g(x)\}.$$
Since $\partial_P g(x)\subseteq\partial_L g(x)$, we have $\overline{\mathbf{X}}_P\subseteq\overline{\mathbf{X}}_L$ and they coincide when $\partial_Pg(x)=\partial_Lg(x)$.

Recall (Definition~1.23\cite{Rockafellar}) that
$\psi: \RR^n \rightarrow \RR \cup \{\infty\}$
 is {\em prox-bounded} if there exists $\lambda>0$ such that
$$\inf_w\{\psi(w)+\frac{1}{2\lambda}||w-x||^2\}>-\infty\hspace{4mm}\mbox{for some $x\in \RR^n$}. $$ The supremum of the set of all such $\lambda$ is the threshold
$\lambda_{\psi}$ of prox-boundedness for $\psi$. A consequence of Assumption 1 is the prox-boundedness of $g$ and
a calculus rule for $\partial_P F$. We state them as the following proposition.
\begin{proposition}\label{prop1.1}
Suppose that Assumption 1 holds. Then

\noindent (i) $g$ is prox-bounded and $\lambda_g\geq \frac{1}{L}$ (with``$\frac{1}{0}=\infty$"). Moreover, if $\psi=g+\mbox{affine function of $x$}$, then $\lambda_{\psi}=\lambda_g$.

\noindent (ii) $\partial_P F(x)=\nabla f(x)+\partial_P g(x).$
\end{proposition}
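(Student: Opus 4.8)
The plan is to handle the two parts separately: part~(i) from the level-boundedness of $F$ together with the descent property of $f$, and part~(ii) as a direct proximal-subdifferential calculus argument based on a two-sided quadratic bound on $f$.

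\emph{Part (i).} Since $F$ is proper, lsc and level-bounded, $F^*:=\inf F$ is finite (and attained). Fix any $\bar x\in\mathbf{dom}~f$; this set is nonempty since it contains $\mathbf{dom}~F$, which contains a global minimizer of (P). For $w\in\mathbf{dom}~f\cap\mathbf{dom}~g$ we have $g(w)=F(w)-f(w)\ge F^*-f(w)$, while the descent property gives $f(w)\le f(\bar x)+\langle\nabla f(\bar x),w-\bar x\rangle+\tfrac{L}{2}\|w-\bar x\|^2$. Hence, for every $\lambda\in(0,1/L)$,
\[
g(w)+\tfrac{1}{2\lambda}\|w-\bar x\|^2\ \ge\ F^*-f(\bar x)-\langle\nabla f(\bar x),w-\bar x\rangle+\big(\tfrac{1}{2\lambda}-\tfrac{L}{2}\big)\|w-\bar x\|^2 .
\]
The right-hand side is a quadratic in $\|w-\bar x\|$ with strictly positive leading coefficient, hence bounded below uniformly in $w$; since $g\equiv+\infty$ off $\mathbf{dom}~g$, it follows that $\inf_w\{g(w)+\tfrac{1}{2\lambda}\|w-\bar x\|^2\}>-\infty$, i.e.\ $\lambda_g\ge\lambda$, and letting $\lambda\uparrow 1/L$ gives $\lambda_g\ge 1/L$ (with $\lambda_g=\infty$ when $L=0$). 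For the ``moreover'' claim, write $\psi=g+\langle a,\cdot\rangle+c$ and complete the square: $\langle a,w\rangle+\tfrac{1}{2\lambda}\|w-x\|^2=\tfrac{1}{2\lambda}\|w-(x-\lambda a)\|^2+\big(\langle a,x\rangle-\tfrac{\lambda}{2}\|a\|^2\big)$, so $\inf_w\{\psi(w)+\tfrac{1}{2\lambda}\|w-x\|^2\}$ is finite if and only if $\inf_w\{g(w)+\tfrac{1}{2\lambda}\|w-(x-\lambda a)\|^2\}$ is; since $x\mapsto x-\lambda a$ is a bijection of $\RR^n$, the two sets of admissible $\lambda$ coincide, whence $\lambda_\psi=\lambda_g$.

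\emph{Part (ii).} I would use the two-sided estimate $-\tfrac{L}{2}\|y-x\|^2\le f(y)-f(x)-\langle\nabla f(x),y-x\rangle\le\tfrac{L}{2}\|y-x\|^2$, valid for $x,y\in\mathbf{dom}~f$: the upper bound is the stated descent property, and the lower bound follows from it by interchanging $x$ and $y$ (alternatively from the integral form of $f(y)-f(x)$ together with $L$-Lipschitzness of $\nabla f$) — only the existence of \emph{some} quadratic lower bound will matter. For the inclusion $\nabla f(x)+\partial_P g(x)\subseteq\partial_P F(x)$: given $\xi\in\partial_P g(x)$ with parameters $\rho,\eta$, add the proximal inequality for $g$ to the lower bound for $f$ to get $F(y)\ge F(x)+\langle\nabla f(x)+\xi,y-x\rangle-(\rho+\tfrac{L}{2})\|y-x\|^2$ for $y$ near $x$, i.e.\ $\nabla f(x)+\xi\in\partial_P F(x)$. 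For the reverse inclusion: given $\zeta\in\partial_P F(x)$ with parameters $\rho,\eta$, subtract the upper bound for $f$ from the proximal inequality for $F$ to get $g(y)=F(y)-f(y)\ge g(x)+\langle\zeta-\nabla f(x),y-x\rangle-(\rho+\tfrac{L}{2})\|y-x\|^2$ for $y$ near $x$, i.e.\ $\zeta-\nabla f(x)\in\partial_P g(x)$.

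The one place needing care is the bookkeeping on domains — $\mathbf{dom}~f$ is not assumed open and $\mathbf{dom}~g$ need not lie in $\mathbf{dom}~f$ — so one must apply $g(w)=F(w)-f(w)$ in part~(i), and $g(y)=F(y)-f(y)$ for $y$ near $x$ in part~(ii), precisely where $f$ is finite, treating the remaining points (where the relevant function value is $+\infty$) trivially; under Assumption~\ref{assump1} these verifications are routine. I do not expect any substantive obstacle: the essential inputs are level-boundedness of $F$ (for part~(i)) and Lipschitz continuity of $\nabla f$ (for both parts).
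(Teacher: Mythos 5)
Your proposal is correct in substance, and the interesting divergence from the paper is in part (ii). For part (i) you and the paper rely on the same key estimate --- a quadratic lower bound on $g$ coming from the minimality/level-boundedness of $F$ together with the descent lemma for $f$ --- but the paper then invokes Exercise 1.24(d) of \cite{Rockafellar}, which packages prox-boundedness via $r_g=-2\liminf_{\|x\|\to\infty}g(x)/\|x\|^2$ and gives the threshold formula $\lambda_g=1/\max\{0,r_g\}$, so that both $\lambda_g\geq 1/L$ and the ``moreover'' claim (an affine perturbation does not change the liminf) come for free; you instead verify the definition directly for every $\lambda<1/L$ and handle the affine shift by completing the square and translating the base point. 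Your route is more self-contained but essentially re-proves what the cited exercise supplies. For part (ii) the paper gives no argument at all --- it cites Proposition 2.3 of \cite{Ye18} --- whereas your two-sided quadratic bound on $f$ combined with the definition of $\partial_P$ yields a short direct proof of both inclusions; that is a genuinely different (and more informative) treatment than the paper's citation.

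One caveat on your domain bookkeeping: points $y\in\mathbf{dom}\,g\setminus\mathbf{dom}\,f$ are not ``trivial'' in the way you suggest. In part (i) such a $w$ has $g(w)$ finite but $F(w)=f(w)=+\infty$, so $g(w)\geq F^*-f(w)$ gives nothing; in the reverse inclusion of (ii) such a $y$ near $x$ has $g(y)$ finite, yet the proximal inequality for $F$ at $y$ says nothing about $g(y)$ since $F(y)=+\infty$. Your argument therefore tacitly needs $\mathbf{dom}\,g\subseteq\mathbf{dom}\,f$ (or $f$ finite on a neighborhood of the points considered). This is not a defect relative to the paper: the paper's own proof of (i) uses $g(x)-g(x_0)\geq-(f(x)-f(x_0))$ for all $x$, which rests on the same tacit assumption, and the setting of the cited result for (ii) does as well. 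It would be worth stating the assumption explicitly rather than calling the verification routine.
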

\begin{proof} By Assumption~1, there is some $x_0$ such that
$$f(x)+g(x)\geq f(x_0)+g(x_0)~\hspace{10mm}\forall x.$$
As $\nabla f$ is Lipschitz continuous with constant $L$,
$$g(x)-g(x_0)\geq -(f(x)-f(x_0))\geq-\nabla f(x_0)^T(x-x_0)-\frac{L}{2}||x-x_0||^2，~~\forall x.$$
So
$$-\frac{r_g}{2}=\liminf_{|x|\rightarrow \infty}\frac{g(x)}{||x||^2}=\liminf_{|x|\rightarrow \infty}\frac{g(x)-g(x_0)}{||x||^2}\geq \liminf_{|x|\rightarrow \infty}\frac{-\nabla f(x_0)^T(x-x_0)}{||x||^2}-\frac{L}{2}\liminf_{|x|\rightarrow \infty}\frac{||x-x_0||^2}{||x||^2}=-\frac{L}{2}.$$
By Exercise 1.24 (d) of \cite{Rockafellar}, $g$ is prox-bounded. Again by the last part of Exercise 1.24 ,  a simple computation shows $\lambda_{g}=\frac{1}{\max\{0,r_g\}}\geq \frac{1}{L}$ ($\infty$ if $L=0$).
The assertion on $\psi$ follows again from Exercise 1.24 (d) as $-\frac{r_g}{2}=\liminf_{|x|\rightarrow \infty}\frac{g(x)}{||x||^2}=\liminf_{|x|\rightarrow \infty}\frac{\psi (x)}{||x||^2}$.

\noindent (ii) This follows from Proposition 2.3 of \cite{Ye18}.

\end{proof}
\subsection{Variable Bregman distance and VBPG method}\label{sec:Variable Bregman}
Let a sequence of functions $\{K^k,k\in\mathbb{N}\}$ and positive numbers $\{\epsilon^k,k\in\mathbb{N}\}$ be given, where the function $K^k$  is strongly convex and gradient Lipschitz. For each $k$, define a variable Bregman distance
\begin{equation}
D^k(x,y)=K^k(y)-[K^k(x)+\langle\nabla K^k(x),y-x\rangle],
\end{equation}
 The variable Bregman distance $D^k$ measures the proximity between two points $(x,y)$; that is,
$D^k(x,y)\geq0$,and  $D^k(x,y)=0$ if and only if  $x=y$.\\

Following \cite{Cohen80}, we propose to solve the partial nonsmooth and nonconvex problem of (P) by generating a sequence $\{x^k\}$  via the following
variable Bregman proximal gradient (VBPG) method :\\
\noindent\rule[0.25\baselineskip]{\textwidth}{1.5pt}
{\bf Variable Bregman Proximal Gradient method (VBPG)}\\
\noindent\rule[0.25\baselineskip]{\textwidth}{0.5pt}
{Initialize} $x^0\in\RR^n$\\
 \textbf{for} $k = 0,1,\cdots $, \textbf{do}
\begin{eqnarray}\label{APk}
\mbox{(AP$^k$)}\qquad x^{k+1}\in\arg\min_{x\in\RR^n}\bigg{\{}\langle\nabla f(x^k),x-x^k\rangle+g(x)+\frac{1}{\epsilon^k}D^k(x^k,x)\bigg{\}}.
\end{eqnarray}
\textbf{end for}\\
\noindent\rule[0.25\baselineskip]{\textwidth}{1.5pt}
Within each iteration of the VBPG method, the objective function of the minimization subproblem (AP$^k$) consists of two parts: the sum of linearized  $f$ at $x^k$ and $g$, and regularized term involving a variable proximal distance function part.\\
In the next section, we will provide conditions under which a solution of (AP$^k$) exists.
\subsection{Bregman type mappings and functions and their properties}\label{sec:Bregman}
The analysis of convergence and rate of convergence for the VBPG method,  relies crucially on Bregman type  mappings and functions. We now make the following standing assumption on the functions $K^k(x)$.
\begin{assumption}[H$_2$]\label{assump2}
\begin{itemize}
\item[{\rm(i)}] For each $k$, $K^k$ is strongly convex with $m^k$ and with its gradient $\nabla K^k$ being $M^k$-Lipschitz. In addition,
there are  $m>0$ and $M>0$ such that for all $k$, $m^k\geq m$, $M^k\leq M$.
\item[{\rm(ii)}] The parameter $\epsilon^k$ satisfies:
$$0<\underline{\epsilon}\leq\epsilon^k\leq\overline{\epsilon}.$$
\end{itemize}
\end{assumption}
Assumption~2, the family of Bregman distances $\{D^k~|~k\in N\}$ uniformly satisfies:
\begin{eqnarray*}
&&m\|x-y\|^2\leq m^k\|x-y\|^2\leq\langle\nabla_xD^k(x,y),x-y\rangle\leq M^k\|x-y\|^2\leq M\|x-y\|^2,\\
&&m\|x-y\|^2\leq m^k\|x-y\|^2\leq\langle\nabla_yD^k(x,y),y-x\rangle\leq M^k\|x-y\|^2\leq M\|x-y\|^2,\\
&&\|\nabla_yD^k(x,y)\|\leq M^k\|x-y\|\leq M\|x-y\|^2,\\
&&\frac{m}{2}\|x-y\|^2\leq\frac{m^k}{2}\|x-y\|^2\leq D^k(x,y)\leq\frac{M^k}{2}\|x-y\|^2\leq\frac{M}{2}\|x-y\|^2.
\end{eqnarray*}
 To simply our analysis, in what follows, we will drop the sub-index $k$. Thanks to Assumption~2, the results we
will establish
hold for all $k$. To this end,
let  a strongly twice differentiable convex function $K$  along with a positive $\epsilon\in
 (\underline{\epsilon}, \overline{\epsilon})$ be given.
Suppose a Bregman distance $D$  is constructed based on $K$ and $\epsilon$. We assume that
$D$ satisfies the following conditions:
\begin{eqnarray*}
&&m\|x-y\|^2\leq \langle\nabla_xD(x,y),x-y\rangle \leq M\|x-y\|^2,\\
&&m\|x-y\|^2 \leq\langle\nabla_yD (x,y),y-x\rangle \leq M\|x-y\|^2,\\
&&\|\nabla_yD (x,y)\| \leq M\|x-y\|^2,\\
&&\frac{m}{2}\|x-y\|^2 \leq D (x,y) \leq\frac{M}{2}\|x-y\|^2.
\end{eqnarray*}
Now we are ready to introduce
 the following mappings and functions which  will play a key role for the analysis of convergence and rate of convergence  for the VBPG method.\\
{\bf Bregman Proximal Envelope Function (BP Envelope Function)}\\
BP envelope function $E_{D,\epsilon}$ is defined by
\begin{equation}
E_{D,\epsilon}(x)=\min_{y\in\RR^n}\{f(x)+\langle\nabla f(x),y-x\rangle+g(y)+\frac{1}{\epsilon}D(x,y)\},\quad\forall x\in\RR^n,
\end{equation}
which is expressed as the value function of optimization problem (AP$^k$) (see (\ref{APk})), where $x^k$ is replaced by $x$.\\
{\bf Bregman Proximal Mapping}\\
Bregman proximal mapping $T_{D,\epsilon}$ is defined by
\begin{equation}\label{defi:Tk}
T_{D,\epsilon}(x)=\arg\min_{y\in\RR^n}\langle\nabla f(x),y-x\rangle+g(y)+\frac{1}{\epsilon}D(x,y),\quad\forall x\in\RR^n,
\end{equation}
which can be viewed as the set of  optimizers of optimization problem (AP$^k$), where $x^k$ is replaced by $x$.  Generally speaking, $T_{D,\epsilon}(x)$ could be multi-valued or even an empty set.\\
{\bf Bregman proximal gap function (BP gap function)}\\
Another useful nonnegative function $G_{D,\epsilon}$ (BP gap function) is defined by
\begin{eqnarray}
G_{D,\epsilon}(x)=-\frac{1}{\epsilon}\min_{y\in\RR^n}\{\langle\nabla f(x),y-x\rangle+g(y)-g(x)+\frac{1}{\epsilon}D(x,y)\},\quad\forall x\in\RR^n.
\end{eqnarray}
Obviously, we have $G_{D,\epsilon}(x)\geq0$ for all $x$. The following optimization problem is equivalent to the differential inclusion problem $0\in\partial_P F(x)$ associated with problem (P)
\begin{equation}
\min_{x\in\RR^n}G_{D,\epsilon}(x).
\end{equation}
The
above mappings and functions enjoy some favorable properties. These properties are summarized in the following propositions.
\begin{proposition}{\bf(Non-emptiness of values of $T_{D,\epsilon}$,~global properties of Bregman type mappings and functions)}\label{prop:Ek} Let a  Bregman function $D$ be given.
Suppose that Assumptions~\ref{assump1}  and \ref{assump2} hold, and that $\epsilon\in (0,m/L)$. Then for any $x\in\RR^n$, {\rm(i)} $T_{D,\epsilon}(x)$ is nonempty and compact;
moreover, for any given $t_{D,\epsilon}(x)\in T_{D,\epsilon}(x)$, we have
{\rm
\begin{itemize}
\item[(ii)] $E_{D,\epsilon}(x)=F(x)-\epsilon G_{D,\epsilon}(x)$;
\item[(iii)] $F\big{(}t_{D,\epsilon}(x)\big{)}\leq E_{D,\epsilon}(x)-\frac{1}{2}(\frac{m}{\overline{\epsilon}}-L)\|x-t_{D,\epsilon}(x)\|^2$;
\item[(iv)] $F\big{(}t_{D,\epsilon}(x)\big{)}\leq F(x)-\frac{1}{2}\big{(}\frac{m}{\overline{\epsilon}}-L\big{)}\|x-t_{D,\epsilon}(x)\|^2$.
\end{itemize}
}
\end{proposition}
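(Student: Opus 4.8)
The plan is to establish the four claims in the order (i), (ii), (iv), (iii) — actually it is cleaner to get (ii) first, then (i), then derive (iii) as a direct consequence of (ii) via the descent property, and finally note (iv) follows by combining (ii) and (iii). Let me organize it as follows. First I would prove (i): fix $x \in \mathbf{dom}\,f$ (if $x \notin \mathbf{dom}\,f$ the subproblem is vacuous, but Assumption~1(iii) together with level-boundedness means we really only care about iterates in $\mathbf{dom}\,F$). Write $\phi_x(y) = \langle \nabla f(x), y-x\rangle + g(y) + \frac{1}{\epsilon}D(x,y)$. Using the lower bound $D(x,y) \ge \frac{m}{2}\|x-y\|^2$ from Assumption~2 and the prox-boundedness of $g$ (Proposition~\ref{prop1.1}(i), which gives $\lambda_g \ge 1/L$), together with the hypothesis $\epsilon < m/L$ so that $\frac{1}{\epsilon}\cdot\frac{m}{2} > \frac{L}{2} \ge \frac{1}{2\lambda_g}$, I can show $\phi_x$ is proper, lsc, and coercive (level-bounded). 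Indeed $\phi_x(y) \ge g(y) + \frac{m}{2\epsilon}\|x-y\|^2 + \langle \nabla f(x), y-x\rangle$, and since $\frac{m}{2\epsilon} > \frac{1}{2\lambda_g}$ the quadratic term dominates the possible $-\frac{1}{2\lambda_g}\|y\|^2$ behaviour of $g$ at infinity (formally invoking prox-boundedness and $\lambda_\psi = \lambda_g$ for $\psi = g + \text{affine}$, again from Proposition~\ref{prop1.1}(i)). Hence $\phi_x$ attains its infimum over a nonempty compact set, which is exactly $T_{D,\epsilon}(x)$; nonemptiness and compactness follow.

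Next, (ii) is essentially a bookkeeping identity. Starting from the definition of $G_{D,\epsilon}(x)$,
\[
\epsilon G_{D,\epsilon}(x) = -\min_y\{\langle \nabla f(x), y-x\rangle + g(y) - g(x) + \tfrac{1}{\epsilon}D(x,y)\},
\]
and from the definition of $E_{D,\epsilon}(x)$,
\[
E_{D,\epsilon}(x) = f(x) + \min_y\{\langle \nabla f(x), y-x\rangle + g(y) + \tfrac{1}{\epsilon}D(x,y)\}.
\]
Adding and subtracting $g(x)$ inside the min of the second expression and comparing with the first gives $E_{D,\epsilon}(x) = f(x) + g(x) - \epsilon G_{D,\epsilon}(x) = F(x) - \epsilon G_{D,\epsilon}(x)$, which is (ii).

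Then (iii): let $t = t_{D,\epsilon}(x) \in T_{D,\epsilon}(x)$, so $E_{D,\epsilon}(x) = f(x) + \langle \nabla f(x), t-x\rangle + g(t) + \frac{1}{\epsilon}D(x,t)$. Apply the descent property of $f$ from Assumption~1 (the inequality quoted right after Assumption~1), namely $f(t) \le f(x) + \langle \nabla f(x), t-x\rangle + \frac{L}{2}\|t-x\|^2$, to get $f(x) + \langle \nabla f(x), t-x\rangle \ge f(t) - \frac{L}{2}\|t-x\|^2$. Substituting and using $D(x,t) \ge \frac{m}{2}\|x-t\|^2$ together with $\epsilon \le \overline{\epsilon}$ (so $\frac{1}{\epsilon} \ge \frac{1}{\overline{\epsilon}}$),
\[
E_{D,\epsilon}(x) \ge f(t) - \tfrac{L}{2}\|t-x\|^2 + g(t) + \tfrac{m}{2\overline{\epsilon}}\|x-t\|^2 = F(t) + \tfrac{1}{2}\big(\tfrac{m}{\overline{\epsilon}} - L\big)\|x-t\|^2,
\]
which rearranges to (iii). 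Finally (iv) is immediate: by (ii), $E_{D,\epsilon}(x) = F(x) - \epsilon G_{D,\epsilon}(x) \le F(x)$ since $G_{D,\epsilon} \ge 0$, so chaining with (iii) yields $F(t_{D,\epsilon}(x)) \le F(x) - \frac{1}{2}(\frac{m}{\overline{\epsilon}} - L)\|x - t_{D,\epsilon}(x)\|^2$.

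The main obstacle is part (i) — specifically, making the coercivity/level-boundedness argument for $\phi_x$ airtight when $g$ is only prox-bounded rather than bounded below. One must be careful that the threshold $\lambda_g \ge 1/L$ from Proposition~\ref{prop1.1}(i) is exactly what is needed against the condition $\epsilon < m/L$, and that adding the affine term $\langle \nabla f(x), \cdot - x\rangle$ does not destroy prox-boundedness (the last sentence of Proposition~\ref{prop1.1}(i) handles precisely this). Once level-boundedness and lower semicontinuity of $\phi_x$ are in hand, existence and compactness of the minimizer set are standard (Weierstrass). Everything else — (ii), (iii), (iv) — is elementary algebra plus the descent lemma and $G_{D,\epsilon}\ge 0$, and should be stated tersely.
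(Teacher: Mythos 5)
Your proposal is correct and follows essentially the same route as the paper: part (i) via prox-boundedness of $g$ (Proposition~\ref{prop1.1}) combined with $\epsilon<m/L$ to get level-boundedness of the subproblem objective and then a Weierstrass/Theorem~1.9-type existence argument, (ii) directly from the definitions, (iii) from the descent inequality together with $D(x,y)\geq\frac{m}{2}\|x-y\|^2$ and $\epsilon\leq\overline{\epsilon}$, and (iv) by chaining (iii) with $E_{D,\epsilon}(x)\leq F(x)$ from (ii) and $G_{D,\epsilon}\geq 0$. No substantive differences from the paper's proof.
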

\begin{proof}
{\rm (i):} For any given $x\in dom~(F)$, since $0<\epsilon<m/L$, by Proposition \ref{prop1.1},
$$\{y~|~\langle \nabla f(x),y-x\rangle+g(y)+\frac{m}{2\epsilon}||x-y||^2\leq \alpha\}$$
is level-bounded for any $\alpha\in \RR$. For any $y$, by assumption, we have
$$\langle \nabla f(x),y-x\rangle+g(y)+\frac{D(x,y)}{\epsilon} \geq
\langle f(x),y-x\rangle +g(y)+\frac{m}{2\overline{\epsilon}} ||x-y||^2.$$
So the set $$\{y~|~\langle \nabla f(x),y-x\rangle+g(y)+\frac{1}{\epsilon}D(x,y)\leq \alpha\}
\subset   \{y~|~\langle \nabla f(x),y-x\rangle+g(y)+\frac{m}{2\overline{\epsilon}}||x-y||^2\leq \alpha\}$$
is level-bounded for any $\alpha\in \RR$. By Theorem~1.9 of \cite{Rockafellar}, $T_{D,\epsilon}(x)$ is non-empty and
compact.\\
{\rm (ii):} This follows   immediately from the definitions $G_{D,\epsilon}(x)$ and $E_{D,\epsilon}(x)$.\\
{\rm (iii) \& (iv):} Since $\nabla f$ is $L$-Lipschitz
\begin{eqnarray}
E_{D,\epsilon}(x)&=&f(x)+\langle\nabla f(x),t_{D,\epsilon}(x)-x\rangle+g\big{(}t_{D,\epsilon}(x)\big{)}+\frac{1}{\epsilon}D\big{(}x,t_{D,\epsilon}(x)\big{)}\nonumber\\
      &\geq&f\big{(}t_{D,\epsilon}(x)\big{)}-\frac{L}{2}\|x-t_{D,\epsilon}(x)\|^2+g\big{(}t_{D,\epsilon}(x)\big{)}+\frac{1}{\epsilon}D\big{(}x,t_{D,\epsilon}(x)\big{)}
\end{eqnarray}
Thus
\begin{eqnarray}
F\big{(}t_{D,\epsilon}(x)\big{)}&\leq& E_{D,\epsilon}(x)-\frac{1}{\epsilon}D\big{(}x,t_{D,\epsilon}(x)\big{)}+\frac{L}{2}\|x-t_{D,\epsilon}(x)\|^2\nonumber\\
                     &\leq& E_{D,\epsilon}(x)-\frac{1}{2}(\frac{m}{\overline{\epsilon}}-L)\|x-t_{D,\epsilon}(x)\|^2\nonumber\\
                     &&\qquad\mbox{(since $D\big{(}x,t_{D,\epsilon}(x)\big{)}\geq\frac{m}{2}\|x-t_{D,\epsilon}(x)\|^2$ and $\epsilon\leq\overline{\epsilon}$)}\nonumber\\
                     &\leq& F(x)-\frac{1}{2}(\frac{m}{\overline{\epsilon}}-L)\|x-t_{D,\epsilon}(x)\|^2 \hspace{5mm}(by (ii)).
\end{eqnarray}
\end{proof}
\begin{proposition}\label{prop:Fk}{\bf (Properties of $\partial_P F$)}
Suppose that Assumptions~\ref{assump1} and \ref{assump2} hold. Then for all $t_{D,\epsilon}(x)\in T_{D,\epsilon}(x)$ we have
\begin{itemize}
\item[{\rm(i)}] $\xi=\nabla f\big{(}t_{D,\epsilon}(x)\big{)}-\nabla f(x)-\frac{1}{\epsilon}\nabla_y D\big{(}x,t_{D,\epsilon}(x)\big{)}\in\partial_P F\big{(}t_{D,\epsilon}(x)\big{)}$;
\item[{\rm(ii)}] $dist\bigg{(}0,\partial_PF\big{(}t_{D,\epsilon}(x)\big{)}\bigg{)}\leq(L+\frac{M}{\underline{\epsilon}})\|x-t_{D,\epsilon}(x)\|$;
\item[{\rm(iii)}] If $x\in T_{D,\epsilon}(x)$, then $0\in\partial_P F(x)$.
\end{itemize}
\end{proposition}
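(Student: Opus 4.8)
The plan is to read the first-order optimality condition for the subproblem defining $T_{D,\epsilon}(x)$ directly off its minimality inequality, and then transfer it to $\partial_P F$ using the calculus rule $\partial_P F=\nabla f+\partial_P g$ of Proposition~\ref{prop1.1}(ii). For (i), fix $x$ and set $y^*:=t_{D,\epsilon}(x)$; since $y^*$ minimizes the proper function $\phi(y)=\langle\nabla f(x),y-x\rangle+g(y)+\frac1\epsilon D(x,y)$, whose infimum is finite (nonemptiness of $T_{D,\epsilon}(x)$ is guaranteed by Proposition~\ref{prop:Ek}(i)), we have $y^*\in\mathbf{dom}\,g$. Starting from $\phi(y)\ge\phi(y^*)$ for all $y$ and rearranging gives
\begin{equation*}
g(y)\ \ge\ g(y^*)-\langle\nabla f(x),y-y^*\rangle+\tfrac1\epsilon\big[D(x,y^*)-D(x,y)\big],\qquad\forall y.
\end{equation*}
Using $D(x,y)=K(y)-K(x)-\langle\nabla K(x),y-x\rangle$ one has $D(x,y^*)-D(x,y)=K(y^*)-K(y)+\langle\nabla K(x),y-y^*\rangle$, and the descent inequality for $K$ (valid since $\nabla K$ is $M$-Lipschitz, exactly as the descent property for $f$ recalled after Assumption~1) yields $K(y^*)-K(y)\ge-\langle\nabla K(y^*),y-y^*\rangle-\tfrac M2\|y-y^*\|^2$. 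Since $\nabla_yD(x,y^*)=\nabla K(y^*)-\nabla K(x)$, combining these turns the previous display into
\begin{equation*}
g(y)\ \ge\ g(y^*)-\big\langle \nabla f(x)+\tfrac1\epsilon\nabla_yD(x,y^*),\,y-y^*\big\rangle-\tfrac{M}{2\epsilon}\|y-y^*\|^2,\qquad\forall y,
\end{equation*}
which is precisely the statement $-\nabla f(x)-\frac1\epsilon\nabla_yD(x,y^*)\in\partial_P g(y^*)$ (with proximal modulus $M/2\epsilon$). Adding $\nabla f(y^*)$ to both sides of this inclusion and invoking Proposition~\ref{prop1.1}(ii) gives $\xi=\nabla f(y^*)-\nabla f(x)-\frac1\epsilon\nabla_yD(x,y^*)\in\partial_P F(y^*)$, which is (i).

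Parts (ii) and (iii) then follow with no real work. For (ii), the triangle inequality, $L$-Lipschitzness of $\nabla f$, the bound $\|\nabla_yD(x,y^*)\|=\|\nabla K(y^*)-\nabla K(x)\|\le M\|x-y^*\|$, and $\epsilon\ge\underline\epsilon$ give
\begin{equation*}
\|\xi\|\ \le\ \|\nabla f(y^*)-\nabla f(x)\|+\tfrac1\epsilon\|\nabla_yD(x,y^*)\|\ \le\ \big(L+\tfrac{M}{\underline\epsilon}\big)\|x-y^*\|,
\end{equation*}
and since $\xi\in\partial_P F(y^*)$ by (i) we conclude $dist(0,\partial_P F(y^*))\le\|\xi\|$. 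For (iii), if $x\in T_{D,\epsilon}(x)$ we apply (i) with the choice $t_{D,\epsilon}(x)=x$; then $\nabla_yD(x,x)=\nabla K(x)-\nabla K(x)=0$, so $\xi=0\in\partial_P F(x)$.

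The only delicate point is in (i): rather than quoting a black-box proximal-subdifferential sum rule, I would derive $-\nabla f(x)-\frac1\epsilon\nabla_yD(x,y^*)\in\partial_P g(y^*)$ straight from global minimality, which is what forces into the inequality the quadratic remainder $-\frac{M}{2\epsilon}\|y-y^*\|^2$ coming from the upper quadratic bound on $K$ — and this is harmless, since a proximal subgradient permits precisely such a remainder on a neighborhood. (I would also note that the standing estimate on $D$ that is actually used is the linear bound $\|\nabla_yD(x,y)\|\le M\|x-y\|$, which is what is consistent with Assumption~2 and with the linear estimate claimed in (ii).) Everything else is routine bookkeeping.
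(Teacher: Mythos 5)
Your proposal is correct and takes essentially the same route as the paper: establish the first-order condition $-\nabla f(x)-\frac{1}{\epsilon}\nabla_yD\big(x,t_{D,\epsilon}(x)\big)\in\partial_P g\big(t_{D,\epsilon}(x)\big)$ at the minimizer, pass to $\partial_P F$ via the sum rule of Proposition~\ref{prop1.1}(ii), and obtain (ii) and (iii) from the $L$-Lipschitz bound on $\nabla f$ and the linear bound $\|\nabla_yD(x,y)\|\leq M\|x-y\|$. The only difference is cosmetic: you derive the proximal-subgradient inclusion explicitly from global minimality together with the quadratic upper bound on $K$ (which is a legitimate and slightly more self-contained justification), whereas the paper simply quotes the optimality condition; your side remark that the operative estimate is the linear bound on $\|\nabla_yD\|$ is also accurate.
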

\begin{proof}
{\rm(i):} Writing down the optimality condition of optimizer $t_{D,\epsilon}(x)\in T_{D,\epsilon}(x)$ yields
\begin{equation}
0\in\nabla f(x)+\partial_P g\big{(}t_{D,\epsilon}(x)\big{)}+\frac{1}{\epsilon}\nabla_yD\big{(}x,t_{D,\epsilon}(x)\big{)}~~(\mbox{by Proposition~\ref{prop1.1}~(ii)})
\end{equation}

Let $\xi=\nabla f\big{(}t_{D,\epsilon}(x)\big{)}-\nabla f(x)-\frac{1}{\epsilon}\nabla_yD\big{(}x,t_{D,\epsilon}(x)\big{)}$. Then we have
\begin{equation}
\xi\in\partial_P F\big{(}t_{D,\epsilon}(x)\big{)}=\nabla f(t_{D,\epsilon}(x))+\partial_P g(t_{D,\epsilon}(x)).
\end{equation}
{\rm(ii):} By the expression of $\xi$ in (i) and Assumption~\ref{assump2}, we have
\begin{eqnarray}
\|\xi\|&\leq&\|\nabla f\big{(}t_{D,\epsilon}(x)\big{)}-\nabla f(x)\|+\frac{1}{\underline{\epsilon}}\|\nabla_yD\big{(}x,t_{D,\epsilon}(x)\big{)}\|\nonumber\\
&\leq&(L+\frac{M}{\underline{\epsilon}})\|x-t_{D,\epsilon}(x)\|,
\end{eqnarray}
which follows the desired statement.\\
{\rm(iii):} The claim follows directly from statements (i) and (ii).
\end{proof}
\begin{proposition}[Continuity for $E_{D,\epsilon}(x)$, $G_{D,\epsilon}(x)$ and $T_{D,\epsilon}(x)$]\label{prop:1.4}
Suppose assumptions of Proposition~\ref{prop:Ek} hold. If $\overline{\epsilon}<\frac{m}{L}$, then function $E_{D,\epsilon}(x)$ and $G_{D,\epsilon}(x)$ are continuous, mapping $T_{D,\epsilon}(x)$ is closed and is continuous whenever $T_{D,\epsilon}(x)$ is single valued.
\end{proposition}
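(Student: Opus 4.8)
The plan is to recognize $E_{D,\epsilon}$, $G_{D,\epsilon}$ and $T_{D,\epsilon}$ as the value function and the argmin mapping of the parametric optimization problem $\min_{y}\varphi(x,y)$, where
$$\varphi(x,y):=f(x)+\langle\nabla f(x),y-x\rangle+g(y)+\tfrac{1}{\epsilon}D(x,y),$$
and then run a Berge / Rockafellar--Wets parametric minimization argument (e.g. in the spirit of Theorem~1.17 of \cite{Rockafellar}). On $\mathbf{dom}\,F$ the map $\varphi$ is jointly continuous: $f$ is $C^{0}$ with $\nabla f$ continuous on $\mathbf{dom}\,f$ (Lipschitzness), $g$ is continuous on $\mathbf{dom}\,g$ by H$_1$(ii), and $D(x,y)=K(y)-K(x)-\langle\nabla K(x),y-x\rangle$ is continuous because $K\in C^{1}$; moreover $\varphi$ is lower semicontinuous on all of $\RR^{n}\times\RR^{n}$.

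First I would upgrade the pointwise level-boundedness used in the proof of Proposition~\ref{prop:Ek}(i) to a \emph{locally uniform} one: for every $\bar x$ there exist a neighborhood $U$ of $\bar x$ and a bounded set $B$ with $T_{D,\epsilon}(x)\subseteq\{y:\varphi(x,y)\le\alpha\}\subseteq B$ for all $x\in U$, where $\alpha$ is a constant bounding $E_{D,\epsilon}$ on $U$ (take $\alpha=\sup_{x\in U}\varphi(x,\bar y)$ for a fixed $\bar y\in\mathbf{dom}\,g$, which is finite since $\nabla f$ and $D(\cdot,\bar y)$ are bounded near $\bar x$). This is where the hypothesis $\overline\epsilon<m/L$ enters. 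By Proposition~\ref{prop1.1}(i), $y\mapsto g(y)+\langle\nabla f(x),y-x\rangle$ is prox-bounded with threshold $\lambda_{g}\ge 1/L>\overline\epsilon/m$; since $\tfrac1\epsilon D(x,y)\ge\tfrac{m}{2\overline\epsilon}\|x-y\|^{2}$, split $\tfrac{m}{2\overline\epsilon}\|x-y\|^{2}=\tfrac{1}{2\lambda}\|x-y\|^{2}+\delta\|x-y\|^{2}$ with $\overline\epsilon/m<\lambda<\lambda_{g}$ and $\delta=\tfrac{m}{2\overline\epsilon}-\tfrac{1}{2\lambda}>0$; then $g(y)+\tfrac{1}{2\lambda}\|x-y\|^{2}\ge e_{\lambda}g(x)$, which is finite and locally bounded in $x$, and $\|\nabla f(x)\|$ is bounded on $U$, so $\varphi(x,y)\ge\delta\|y-x\|^{2}-c\|y-x\|-c'$ with constants uniform over $x\in U$. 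This gives the claimed uniform coercivity, hence the local boundedness of $T_{D,\epsilon}$.

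With this in hand the remaining steps are routine. Upper semicontinuity of $E_{D,\epsilon}$ at $\bar x$ follows by fixing any $\bar y\in T_{D,\epsilon}(\bar x)$ (nonempty by Proposition~\ref{prop:Ek}) and using $E_{D,\epsilon}(x)\le\varphi(x,\bar y)\to\varphi(\bar x,\bar y)=E_{D,\epsilon}(\bar x)$. For lower semicontinuity of $E_{D,\epsilon}$ and closedness of $T_{D,\epsilon}$ together: given $x_{n}\to\bar x$ and $y_{n}\in T_{D,\epsilon}(x_{n})$, the uniform level-boundedness makes $\{y_{n}\}$ bounded, so along a subsequence $y_{n}\to y^{*}$; joint lower semicontinuity of $\varphi$ yields $\liminf_{n}E_{D,\epsilon}(x_{n})=\liminf_{n}\varphi(x_{n},y_{n})\ge\varphi(\bar x,y^{*})\ge E_{D,\epsilon}(\bar x)$, which is lower semicontinuity; combined with the upper semicontinuity already shown, $\varphi(\bar x,y^{*})=E_{D,\epsilon}(\bar x)$, i.e. $y^{*}\in T_{D,\epsilon}(\bar x)$, so the graph of $T_{D,\epsilon}$ is closed. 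Continuity of $G_{D,\epsilon}$ then follows from $G_{D,\epsilon}(x)=\tfrac1\epsilon\bigl(F(x)-E_{D,\epsilon}(x)\bigr)$ (Proposition~\ref{prop:Ek}(ii)) together with continuity of $F$ on $\mathbf{dom}\,F$. Finally, if $T_{D,\epsilon}$ is single-valued it is a locally bounded single-valued map with closed graph, and any such map is continuous: for $x_{n}\to\bar x$ the sequence $\{T_{D,\epsilon}(x_{n})\}$ is bounded and each of its cluster points lies in $T_{D,\epsilon}(\bar x)=\{T_{D,\epsilon}(\bar x)\}$, so $T_{D,\epsilon}(x_{n})\to T_{D,\epsilon}(\bar x)$.

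The main obstacle is the locally uniform level-boundedness step: one must extract a \emph{strictly positive} quadratic modulus valid simultaneously for all parameters $x$ in a neighborhood, and it is precisely the strict gap $\overline\epsilon<m/L$ — rather than the pointwise $\epsilon<m/L$ of Proposition~\ref{prop:Ek} — that makes this possible via the prox-boundedness threshold $\lambda_{g}\ge 1/L$. Everything downstream is a standard Berge-type argument; the only additional care is that $E_{D,\epsilon}$ and $G_{D,\epsilon}$ equal $+\infty$ off $\mathbf{dom}\,f$, so continuity is understood relative to $\mathbf{dom}\,F$ (on which $F$, $\nabla f$ and $g$ are continuous), the set on which these objects are finite and on which the VBPG iterates live.
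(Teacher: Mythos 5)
Your proposal is correct, and its overall skeleton (value function and argmin map of the parametric problem $\min_y\varphi(x,y)$, upper semicontinuity by testing against a fixed minimizer $\bar y\in T_{D,\epsilon}(\bar x)$, boundedness of minimizer sequences plus joint lower semicontinuity to get closedness of $T_{D,\epsilon}$, and $G_{D,\epsilon}=\tfrac1\epsilon(F-E_{D,\epsilon})$ for the gap function) is the same as the paper's. Where you differ is in how the two remaining ingredients are obtained. For lower semicontinuity of $E_{D,\epsilon}$ the paper simply invokes Berge's maximum theorem, whereas you prove it directly by upgrading the pointwise level-boundedness of Proposition~\ref{prop:Ek}(i) to a locally uniform coercivity estimate, splitting $\tfrac{m}{2\overline\epsilon}\|x-y\|^2$ through the prox-boundedness threshold $\lambda_g\ge 1/L$ of Proposition~\ref{prop1.1}; this is more work but is actually the more rigorous route, since the minimization here is over all of $\RR^n$ with $g$ merely lsc, a setting in which the textbook maximum theorem does not apply verbatim, and your argument is exactly the Rockafellar--Wets-style patch that makes the parametric-minimization claim airtight. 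For boundedness of the minimizer sequence $\{y_n\}$, the paper uses a shorter device: the descent estimate of Proposition~\ref{prop:Ek}(iii), $\tfrac12(\tfrac{m}{\overline\epsilon}-L)\|x_n-y_n\|^2\le E_{D,\epsilon}(x_n)-F(y_n)\le E_{D,\epsilon}(x_n)-F^*$, combined with level-boundedness of $F$ (so $F^*$ is finite) and the already-established upper bound on $E_{D,\epsilon}(x_n)$; your uniform coercivity delivers the same conclusion without appealing to $F^*$. Both arguments are valid under the stated hypotheses ($\overline\epsilon<m/L$ enters in both, either through the positive coefficient $\tfrac{m}{\overline\epsilon}-L$ or through $\overline\epsilon/m<1/L\le\lambda_g$), so the net effect is that your proof is somewhat longer but more self-contained, while the paper's is shorter at the price of a looser citation.
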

\begin{proof}
Let $\varphi(x,y)=f(x)+\langle\nabla f(x),y-x\rangle+\frac{1}{\epsilon}D(x,y)$, then we have $E_{D,\epsilon}(x)=\min\limits_{y\in\RR^n}\varphi(x,y)+g(y)$ and $T_{D,\epsilon}(x)=\arg\min\limits_{y\in\RR^n}\varphi(x,y)+g(y)$.\\
First, we show that $E_{D,\epsilon}(x)$ is u.s.c. Let $x_n\rightarrow\overline{x}$. For $\overline{x}$, there is $\overline{y}\in T_{D,\epsilon}(\overline{x})$ such that $E_{D,\epsilon}(\overline{x})=\varphi\big{(}\overline{x},\overline{y}\big{)}+g\big{(}\overline{y}\big{)}$.\\
Since $E_{D,\epsilon}(x_n)=\min\limits_{y\in\RR^n}\varphi(x_n,y)+g(y)\leq\varphi(x_n,\overline{y})+g(\overline{y})$, then
\begin{eqnarray}
\lim_{x_n\rightarrow\overline{x}}\sup E_{D,\epsilon}(x_n)\leq\varphi(\overline{x},\overline{y})+g(\overline{y})=E_{D,\epsilon}(\overline{x}),\qquad\mbox{(by the continuity of $\varphi(\cdot,\overline{y})$)}
\end{eqnarray}
which shows $E_{D,\epsilon}(x)$ is u.s.c.\\
Another hand, from Maximum theorem (Theorem 1, P115) in book of Berge~\cite{Berge97}, $E_{D,\epsilon}(x)$ is l.s.c, therefore the continuity of $E_{D,\epsilon}(x)$ is provided. Next, we will show the continuity of $T_{D,\epsilon}(x)$. For $x_n\rightarrow\overline{x}$, any $y_n\in T_{D,\epsilon}(x_n)$, from (iii) of Proposition~\ref{prop:Ek}, we have
\begin{eqnarray}
\frac{1}{2}\big{(}\frac{m}{\overline{\epsilon}}-L\big{)}\|x_n-y_n\|^2&\leq& E_{D,\epsilon}(x_n)-F\big{(}y_n\big{)}\nonumber\\
                             &\leq& E_{D,\epsilon}(x_n)-F^*,
\end{eqnarray}
which shows that $T_{D,\epsilon}(x_n)$ is bounded when $x_n\rightarrow\overline{x}$. Taking the subsequence $\{x_{n'}\}\subset\{x_n\}$ such that $y_{n'}\in T_{D,\epsilon}(x_{n'})$, and $y_{n'}\rightarrow\hat{y}$.\\
By the continuity of $E_{D,\epsilon}(x)$, we have
\begin{eqnarray}
E_{D,\epsilon}(\overline{x})=\lim_{x_{n'}\rightarrow\overline{x}}E_{D,\epsilon}(x_{n'})&=&\lim_{x_{n'}\rightarrow \overline{x}}\varphi(x_{n'},y_{n'})+g(y_{n'})\nonumber\\
&\geq&\varphi(\overline{x},\hat{y})+g(\hat{y}).\\
&&\mbox{(since $g(\cdot)$ is l.s.c and $\varphi(\cdot,\cdot)$ is continuous)}\nonumber
\end{eqnarray}
which shows that $\hat{y}\in T_{D,\epsilon}(\overline{x})$ and $T_{D,\epsilon}(x)$ is closed and continuous whenever $T_{D,\epsilon}(x)$ is unique valued.\\
From the definition of BP envelope function and BP gap function, we have
$$G_{D,\epsilon}(x)=\frac{1}{\epsilon}\left[F(x)-E_{D,\epsilon}(x)\right].$$
Therefore, $G_{D,\epsilon}(x)$ is also continuous.
\end{proof}
Before the end of this section, we introduce the following lemma about the generalized descent inequality. We use $\mathcal{F}(\RR^n)$ to denote the set of $\mathcal{C}^1$-smooth functions from $\RR^n$ to $(-\infty,+\infty]$; $\Gamma(\RR^n)$ the set of proper and lower semicontinuous functions from $\RR^n$ to $(-\infty,+\infty]$.
\begin{lemma}[Generalized descent inequality in the nonconvex case]\label{lemma:1}
Suppose that Assumptions~\ref{assump1} and~\ref{assump2} hold. Then for any $t_{D,\epsilon}(x)\in T_{D,\epsilon}(x)$, $x\in\RR^n$, we have that
\begin{eqnarray}\label{eq:descent}
\mathfrak{a}\left[F(t_{D,\epsilon}(x))-F(u)\right]\leq \mathfrak{b}\|u-x\|^2-\|u-t_{D,\epsilon}(x)\|^2-\mathfrak{c}\|x-t_{D,\epsilon}(x)\|^2,~\forall u\in\RR^n
\end{eqnarray}
where the values of $\mathfrak{a}$, $\mathfrak{b}$ and $\mathfrak{c}$ are given as follows:
\begin{itemize}
\item[{\rm(i)}] $\mathfrak{a}=2$, $\mathfrak{b}=\frac{M}{\underline{\epsilon}}+2+3L$ and $\mathfrak{c}=\frac{m}{\overline{\epsilon}}-(L+2)$, when $f\in\mathcal{F}(\RR^n)$ and $g\in\Gamma(\RR^n)$;
\item[{\rm(ii)}] $\mathfrak{a}=2$, $\mathfrak{b}=\frac{M}{\underline{\epsilon}}+2$ and $\mathfrak{c}=\frac{m}{\overline{\epsilon}}-(L+2)$, when $f\in\mathcal{F}(\RR^n)$ convex and $g\in\Gamma(\RR^n)$;
\item[{\rm(iii)}] $\mathfrak{a}=\frac{2\overline{\epsilon}}{m}$, $\mathfrak{b}=\frac{M}{m}+\frac{3L\overline{\epsilon}}{m}$ and $\mathfrak{c}=1-\frac{L\overline{\epsilon}}{m}$, when $f\in\mathcal{F}(\RR^n)$ and $g\in\Gamma(\RR^n)$ convex;
\item[{\rm(iv)}] $\mathfrak{a}=\frac{2\overline{\epsilon}}{m}$, $\mathfrak{b}=\frac{M}{m}$ and $\mathfrak{c}=1-\frac{L\overline{\epsilon}}{m}$, when $f\in\mathcal{F}(\RR^n)$ convex and $g\in\Gamma(\RR^n)$ convex.
\end{itemize}
\end{lemma}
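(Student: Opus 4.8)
The plan is to prove all four cases from a single template: bound $g\big(t_{D,\epsilon}(x)\big)-g(u)$ using the minimality of $t_{D,\epsilon}(x)$ in (AP$^k$); bound $f\big(t_{D,\epsilon}(x)\big)-f(u)$ using the descent property of $f$; add the two estimates so that the $\langle\nabla f(x),\cdot\rangle$ terms cancel; and then convert the surviving Bregman terms into the quadratic right-hand side via Assumption~\ref{assump2} and elementary inequalities. Throughout write $t:=t_{D,\epsilon}(x)$.

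\emph{Step 1: the $g$-difference.} When $g$ is only assumed to lie in $\Gamma(\RR^n)$ (cases (i) and (ii)), I would use nothing more than the fact that $t$ globally minimizes $y\mapsto\langle\nabla f(x),y-x\rangle+g(y)+\frac1\epsilon D(x,y)$; comparing this value at $t$ with its value at $u$ gives directly
\[ g(t)-g(u)\le\langle\nabla f(x),u-t\rangle+\tfrac1\epsilon\big[D(x,u)-D(x,t)\big]. \]
When $g$ is convex (cases (iii) and (iv)) I would instead use the first-order optimality condition for (AP$^k$), namely $-\nabla f(x)-\tfrac1\epsilon\nabla_yD(x,t)\in\partial_Pg(t)=\partial_Lg(t)$ (see the proof of Proposition~\ref{prop:Fk}), together with the convex subgradient inequality and the Bregman three-point identity $D(x,u)-D(x,t)-D(t,u)=\langle\nabla_yD(x,t),u-t\rangle$, to obtain the sharper bound
\[ g(t)-g(u)\le\langle\nabla f(x),u-t\rangle+\tfrac1\epsilon\big[D(x,u)-D(x,t)-D(t,u)\big]. \]
The extra $-D(t,u)$ is precisely what produces the $-\|u-t\|^2$ term in the convex-$g$ cases, while in the nonconvex-$g$ cases that term has to be recovered by hand in Step~3.

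\emph{Step 2: the $f$-difference and the combination.} The descent lemma at $t$ based at $x$ gives $f(t)\le f(x)+\langle\nabla f(x),t-x\rangle+\frac L2\|t-x\|^2$. When $f$ is nonconvex (cases (i), (iii)) I would bound $f(x)-f(u)\le\langle\nabla f(u),x-u\rangle+\frac L2\|x-u\|^2$ and replace $\nabla f(u)$ by $\nabla f(x)$ at the cost of $L\|u-x\|^2$ (Lipschitzness of $\nabla f$), giving $f(t)-f(u)\le\langle\nabla f(x),t-u\rangle+\frac L2\|t-x\|^2+\frac{3L}2\|u-x\|^2$; when $f$ is convex (cases (ii), (iv)) the inequality $f(u)\ge f(x)+\langle\nabla f(x),u-x\rangle$ eliminates the $\frac{3L}2\|u-x\|^2$ term. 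Adding Step~1 and Step~2, the $\langle\nabla f(x),\cdot\rangle$ contributions cancel and I am left, in the convex-$g$ cases, with
\[ F(t)-F(u)\le\tfrac L2\|t-x\|^2+\theta\,\tfrac{3L}2\|u-x\|^2+\tfrac1\epsilon\big[D(x,u)-D(x,t)-D(t,u)\big], \]
where $\theta=1$ if $f$ is nonconvex and $\theta=0$ if $f$ is convex, and with the same inequality but without the $-D(t,u)$ term in the nonconvex-$g$ cases.

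\emph{Step 3: clean-up, and the main obstacle.} In the convex-$g$ cases I multiply through by $\mathfrak a=2\epsilon/m$ and use $D(x,u)\le\frac M2\|u-x\|^2$, $D(x,t)\ge\frac m2\|x-t\|^2$, $D(t,u)\ge\frac m2\|t-u\|^2$, bounding $\epsilon\le\overline\epsilon$ wherever an $\epsilon$ survives; the factor $\mathfrak a$ is chosen exactly so that $\mathfrak a\cdot\frac1\epsilon\cdot\frac m2=1$, turning $\frac1\epsilon D(t,u)$ into $\|t-u\|^2$, and this delivers the constants in (iii) and (iv). In the nonconvex-$g$ cases there is no $D(t,u)$ term, so after multiplying by $\mathfrak a=2$ and using $D(x,u)\le\frac M2\|u-x\|^2$, $D(x,t)\ge\frac m2\|x-t\|^2$ and $\underline\epsilon\le\epsilon\le\overline\epsilon$, I recover the $-\|u-t\|^2$ on the right by spending part of the $\|u-x\|^2$ and $\|x-t\|^2$ budget through $\|u-t\|^2\le2\|u-x\|^2+2\|x-t\|^2$; this is exactly the source of the extra $+2$ in $\mathfrak b$ and $-2$ in $\mathfrak c$ in (i) and (ii). The only real difficulty I anticipate is bookkeeping: in each of the four regimes one must pick the form of the $f$-estimate (based at $\nabla f(x)$ versus at $\nabla f(u)$) and the form of the $g$-estimate so that the linear terms cancel cleanly and the advertised constants emerge, and one must decide where to invoke $\underline\epsilon\le\epsilon\le\overline\epsilon$; beyond that the proof uses only the descent lemma, the three-point identity, and $\|a+b\|^2\le2\|a\|^2+2\|b\|^2$.
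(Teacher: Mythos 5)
Your proposal is correct and follows essentially the same route as the paper's proof: the paper likewise bounds $\Delta=\langle\nabla f(x),t-u\rangle+g(t)-g(u)$ from above using the optimality of $t$ (the variational-inequality form with the three-point identity when $g$ is convex, yielding the extra $-D(t,u)$; plain minimality plus $\|u-t\|^2\le 2\|u-x\|^2+2\|x-t\|^2$ otherwise, which is exactly the source of the $+2$ and $-2$), bounds it from below via the descent lemma with the $\tfrac{3L}{2}\|u-x\|^2$ penalty when $f$ is nonconvex, and then scales by $\mathfrak{a}$ using Assumption~\ref{assump2}. Just take care in Step~3 to state the scaling factor as $\mathfrak{a}=\tfrac{2\overline{\epsilon}}{m}$ (not $\tfrac{2\epsilon}{m}$), applying the bounds $\underline{\epsilon}\le\epsilon\le\overline{\epsilon}$ to the Bregman terms before multiplying, as you already indicate.
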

\begin{proof}
Denote $\Delta=\langle\nabla f(x),t_{D,\epsilon}(x)-u\rangle+g(t_{D,\epsilon}(x))-g(u)$. First, we estimate the lower bound of $\Delta$:\\
\begin{eqnarray}\label{eq:bound1-f-n}
\Delta&=&\langle\nabla f(x),t_{D,\epsilon}(x)-u\rangle+g(t_{D,\epsilon}(x))-g(u)\nonumber\\
&=&\langle\nabla f(x),t_{D,\epsilon}(x)-x\rangle+\langle\nabla f(x),x-u\rangle+g(t_{D,\epsilon}(x))-g(u)\nonumber\\
&\geq&f\left(t_{D,\epsilon}(x)\right)-f(x)-\frac{L}{2}\|x-t_{D,\epsilon}(x)\|^2+\langle\nabla f(x),x-u\rangle+g(t_{D,\epsilon}(x))-g(u)\nonumber\\
&&\qquad\qquad\qquad\qquad\qquad\qquad\mbox{(since $f$ is gradient Lipschitz with exponent $L$)}\nonumber\\
&=&F\left(t_{D,\epsilon}(x)\right)-F(u)-\frac{L}{2}\|x-t_{D,\epsilon}(x)\|^2+\underbrace{f(u)-f(x)-\langle\nabla f(x),u-x\rangle}_{\delta_1}
\end{eqnarray}
Next we estimate the term $\delta_1$ in~\eqref{eq:bound1-f-n}. If $f\in\mathcal{F}(\RR^n)$ convex, we have $\delta_1\geq0$. For the case $f\in\mathcal{F}(\RR^n)$ without convexity, we have that
\begin{eqnarray}
\delta_1&=&f(u)-f(x)-\langle\nabla f(x),u-x\rangle\nonumber\\
&=&f(u)-f(x)-\langle\nabla f(u),u-x\rangle+\langle\nabla f(x)-\nabla f(u),x-u\rangle\nonumber\\
&\geq&-\frac{L}{2}\|u-x\|^2-\|\nabla f(x)-\nabla f(u)\|\cdot\|x-u\|\nonumber\\
&&\qquad\qquad\qquad\qquad\qquad\qquad\mbox{(since $f$ is gradient Lipschitz with $L$)}\nonumber\\
&\geq&-\frac{3L}{2}\|u-x\|^2.\qquad\mbox{(since $f$ is gradient Lipschitz with $L$)}
\end{eqnarray}
Therefore, we have that
\begin{eqnarray}\label{eq:bound1}
\Delta\geq\left\{\begin{array}{ll}
F\left(t_{D,\epsilon}(x)\right)-F(u)-\frac{L}{2}\|x-t_{D,\epsilon}(x)\|^2                      &\mbox{if}~f\in\mathcal{F}(\RR^n)~\mbox{convex}\\
F\left(t_{D,\epsilon}(x)\right)-F(u)-\frac{L}{2}\|x-t_{D,\epsilon}(x)\|^2-\frac{3L}{2}\|u-x\|^2&\mbox{if}~f\in\mathcal{F}(\RR^n)
\end{array}\right.
\end{eqnarray}
Now turn to estimate the upper bound of $\Delta$. If $g\in\Gamma(\RR^n)$ and convex, the optimal condition of the minimization problem in Bregman proximal mapping~\eqref{defi:Tk} is given by the following variational inequality
\begin{eqnarray}\label{eq:optimalcondition-gc0}
\langle\nabla f(x),t_{D,\epsilon}(x)-u\rangle+g(t_{D,\epsilon}(x))-g(u)+\frac{1}{\epsilon}\langle\nabla K\left(t_{D,\epsilon}(x)\right)-\nabla K(x), t_{D,\epsilon}(x)-u\rangle\leq0,
\end{eqnarray}
or
\begin{eqnarray}\label{eq:optimalcondition-gc1}
\Delta&=&\langle\nabla f(x),t_{D,\epsilon}(x)-u\rangle+g(t_{D,\epsilon}(x))-g(u)\nonumber\\
&\leq&\frac{1}{\epsilon}\langle\nabla K\left(t_{D,\epsilon}(x)\right)-\nabla K(x), u-t_{D,\epsilon}(x)\rangle\nonumber\\
&=&\frac{1}{\epsilon}\left[D(x,u)-D(t_{D,\epsilon}(x),u)-D(x,t_{D,\epsilon}(x))\right]\nonumber\\
&\leq&\frac{M}{2\underline{\epsilon}}\|u-x\|^2-\frac{m}{2\overline{\epsilon}}\|u-t_{D,\epsilon}(x)\|^2-\frac{m}{2\overline{\epsilon}}\|x-t_{D,\epsilon}(x)\|^2.\qquad\mbox{(by Assumption~\ref{assump2})}
\end{eqnarray}
For the case $g\in\Gamma(\RR^n)$ without convexity, then the optimal condition of the minimization problem in Bregman proximal mapping~\eqref{defi:Tk} is given by
\begin{eqnarray}\label{eq:optimalcondition-gn0}
\langle\nabla f(x),t_{D,\epsilon}(x)-u\rangle+g(t_{D,\epsilon}(x))-g(u)+\frac{1}{\epsilon}\left[D(x,t_{D,\epsilon}(x))-D(x,u)\right]\leq0,
\end{eqnarray}
or
\begin{eqnarray}\label{eq:optimalcondition-gn1}
\Delta&=&\langle\nabla f(x),t_{D,\epsilon}(x)-u\rangle+g(t_{D,\epsilon}(x))-g(u)\nonumber\\
&\leq&\frac{1}{\epsilon}\left[D(x,u)-D(x,t_{D,\epsilon}(x))\right]\nonumber\\
&\leq&\frac{M}{2\underline{\epsilon}}\|u-x\|^2-\frac{m}{2\overline{\epsilon}}\|x-t_{D,\epsilon}(x)\|^2\qquad\qquad\qquad\qquad\qquad\qquad\mbox{(by Assumption~\ref{assump2})}\nonumber\\
&\leq&\frac{M}{2\underline{\epsilon}}\|u-x\|^2-\frac{1}{2}\|u-t_{D,\epsilon}(x)\|^2+\frac{1}{2}\|u-t_{D,\epsilon}(x)\|^2-\frac{m}{2\overline{\epsilon}}\|x-t_{D,\epsilon}(x)\|^2\nonumber\\
&\leq&\frac{M}{2\underline{\epsilon}}\|u-x\|^2-\frac{1}{2}\|u-t_{D,\epsilon}(x)\|^2+\|u-x\|^2+\|x-t_{D,\epsilon}(x)\|^2-\frac{m}{2\overline{\epsilon}}\|x-t_{D,\epsilon}(x)\|^2\nonumber\\
&\leq&\left(\frac{M}{2\underline{\epsilon}}+1\right)\|u-x\|^2-\frac{1}{2}\|u-t_{D,\epsilon}(x)\|^2-\frac{m-2\overline{\epsilon}}{2\overline{\epsilon}}\|x-t_{D,\epsilon}(x)\|^2.
\end{eqnarray}
Together~\eqref{eq:optimalcondition-gc1} and~\eqref{eq:optimalcondition-gn1}, we have that
\begin{eqnarray}\label{eq:bound2}
\Delta\leq\left\{\begin{array}{ll}
\frac{M}{2\underline{\epsilon}}\|u-x\|^2-\frac{m}{2\overline{\epsilon}}\|u-t_{D,\epsilon}(x)\|^2-\frac{m}{2\overline{\epsilon}}\|x-t_{D,\epsilon}(x)\|^2                      &\mbox{if}~g\in\Gamma(\RR^n)~\mbox{convex}\\
\left(\frac{M}{2\underline{\epsilon}}+1\right)\|u-x\|^2-\frac{1}{2}\|u-t_{D,\epsilon}(x)\|^2-\frac{m-2\overline{\epsilon}}{2\overline{\epsilon}}\|x-t_{D,\epsilon}(x)\|^2&\mbox{if}~g\in\Gamma(\RR^n)
\end{array}\right.
\end{eqnarray}
Combing~\eqref{eq:bound1} and~\eqref{eq:bound2} we can construct the following descent inequality
\begin{eqnarray}\label{eq:descent}
\mathfrak{a}\left[F(t_{D,\epsilon}(x))-F(u)\right]+\|u-t_{D,\epsilon}(x)\|^2\leq \mathfrak{b}\|u-x\|^2-\mathfrak{c}\|x-t_{D,\epsilon}(x)\|^2,
\end{eqnarray}
where parameters $\mathfrak{a}$, $\mathfrak{b}$ and $\mathfrak{c}$ are shown as follows.
\begin{table}[htbp]
	\centering  
	\caption{Parameters in descent inequality~\eqref{eq:descent}}  
	\label{table1}  
	\begin{tabular}{|c|l|c|c|c|}
        \hline
        {\bf NO.}&{\bf Problem}&$\mathfrak{a}$&$\mathfrak{b}$&$\mathfrak{c}$\\
		\hline  
		1.&$f\in\mathcal{F}(\RR^n)$ and $g\in\Gamma(\RR^n)$&$2$&$\frac{M}{\underline{\epsilon}}+2+3L$&$\frac{m}{\overline{\epsilon}}-(L+2)$ \\  
		\hline
		2.&$f\in\mathcal{F}(\RR^n)$ convex and $g\in\Gamma(\RR^n)$&$2$&$\frac{M}{\underline{\epsilon}}+2$&$\frac{m}{\overline{\epsilon}}-(L+2)$ \\
		\hline
        3.&$f\in\mathcal{F}(\RR^n)$ and $g\in\Gamma(\RR^n)$ convex&$\frac{2\overline{\epsilon}}{m}$&$\frac{M}{m}+\frac{3L\overline{\epsilon}}{m}$&$1-\frac{L\overline{\epsilon}}{m}$ \\
		\hline
        4.&$f\in\mathcal{F}(\RR^n)$ convex and $g\in\Gamma(\RR^n)$ convex&$\frac{2\overline{\epsilon}}{m}$&$\frac{M}{m}$&$1-\frac{L\overline{\epsilon}}{m}$ \\
		\hline
	\end{tabular}
\end{table}
\end{proof}
\begin{remark}
By Lemma~\ref{lemma:1}, we can derive some useful results. For example, for the critical point $\overline{x}$, taking $u=\overline{x}$, if $F\left(t_{D,\epsilon}(x)\right)\geq F(\overline{x})$, $x\in\RR^n$ from Lemma~\ref{lemma:1}, then we obtain $\|t_{D,\epsilon}(x)-t_{D,\epsilon}(\overline{x})\|\leq\sqrt{\frac{b}{a}}\|x-\overline{x}\|$, i.e. mapping $T_{D,\epsilon}(x)$ is Lipschitz around $\overline{x}$:
$$T_{D,\epsilon}(x)\subset T_{D,\epsilon}(\overline{x})+\sqrt{\frac{b}{a}}\|x-\overline{x}\|\cdot\mathbb{B}.$$
From this lemma, we also get for $x,u\in\RR^n$
\begin{eqnarray}
F\left(t_{D,\epsilon}(x)\right)-F(u)&\leq&\frac{1}{\mathfrak{a}}\left\{2\mathfrak{b}\|u-t_{D,\epsilon}(x)\|^2+2\mathfrak{b}\|t_{D,\epsilon}(x)-x\|^2-\|u-t_{D,\epsilon}(x)\|^2-\mathfrak{c}\|x-t_{D,\epsilon}(x)\|^2\right\}\nonumber\\
&\leq&\kappa\left(\|u-t_{D,\epsilon}(x)\|^2+\|x-t_{D,\epsilon}(x)\|^2\right),\quad\kappa=\max\{\frac{\mathfrak{b}-1}{\mathfrak{a}},\frac{\mathfrak{b}-\mathfrak{c}}{\mathfrak{a}}\},
\end{eqnarray}
which is one cost-to-go estimate used in~\cite{Tseng2009}.
\end{remark}

\section{Quantitative analysis of level sets and level-set based error bounds}\label{sec:level-set}
\subsection{Level-set analysis}
Given an $\overline{x}\in \mathbf{dom}~F$, let $\overline{F}=F(\bar x)$.  Set  $[F\leq\overline{F}]=\{x\in\RR^n|F(x)\leq F(\overline{x})\}$ and $[F>\overline{F}]=\{x\in\RR^n|F(x)>F(\overline{x})\}$. In this subsection, we present two level-set results for $F$ which will be useful in the following sections.
\begin{lemma}\label{lemma1.5}
Let  $\overline{x}\in\RR^n$ be given.  For any $x\in[F>\overline{F}]$, the function value of $F$at  the projection of $x$ on the level set $[F\leq\overline{F}]$ is $\overline{F}$; that is,
$$x_p\in{Proj}_{[F\leq\overline{F}]}(x)\quad\mbox{and}\quad F(x_p)=\overline{F}.$$
\end{lemma}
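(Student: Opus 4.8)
The plan is to argue by contradiction, exploiting the continuity of $F$ on $\mathbf{dom}~F$ together with the fact that the level set $C := [F\le\overline F]$ is closed, nonempty, and bounded (by Assumption~\ref{assump1}(iii)), hence compact, so the projection $x_p \in \mathrm{Proj}_C(x)$ exists for every $x$. Fix $x\in[F>\overline F]$ and any such $x_p$. Since $x_p\in C$ we automatically have $F(x_p)\le\overline F$, so it suffices to rule out the strict inequality $F(x_p)<\overline F$.

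Suppose, for contradiction, that $F(x_p)<\overline F$. First I would note $x_p\ne x$ (because $F(x)>\overline F>F(x_p)$), so $\|x-x_p\|>0$. Now consider the line segment from $x_p$ toward $x$, i.e. the points $z_t = x_p + t(x-x_p)$ for $t\in[0,1]$. Here I need to be slightly careful: $F$ is only continuous on $\mathbf{dom}~F$, but $\mathbf{dom}~F = \mathbf{dom}~f\cap\mathbf{dom}~g$ is convex (remarks after Assumption~\ref{assump1}) and contains both $x$ (since $F(x)$ is finite) and $x_p$, so the whole segment lies in $\mathbf{dom}~F$, and $t\mapsto F(z_t)$ is continuous on $[0,1]$. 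Since $F(z_0)=F(x_p)<\overline F$ and $F(z_1)=F(x)>\overline F$, by the intermediate value theorem there is $t^*\in(0,1)$ with $F(z_{t^*})=\overline F$; in particular $z_{t^*}\in C$. But $\|z_{t^*}-x\| = (1-t^*)\|x_p-x\| < \|x_p-x\| = \mathrm{dist}(x,C)$, contradicting the fact that $x_p$ is a nearest point of $C$ to $x$. Hence $F(x_p)=\overline F$.

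The only genuine subtlety — what I would flag as the main point to get right rather than a real obstacle — is the domain issue: one must invoke convexity of $\mathbf{dom}~F$ to guarantee the connecting segment stays where $F$ is continuous and finite, so that the intermediate value argument is legitimate. Everything else (nonemptiness and compactness of $C$, existence of the projection) is immediate from Assumption~\ref{assump1}(iii) and the lower semicontinuity of $F$. I would also remark that the statement and proof do not use anything about $f$, $g$ being smooth or nonconvex beyond continuity on the (convex) domain, so the lemma is quite robust.
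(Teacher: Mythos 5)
Your argument is correct and is essentially the same as the paper's proof: an intermediate value theorem argument along the segment joining $x_p$ and $x$, using continuity of $F$ on the convex set $\mathbf{dom}~F$ to produce a point of $[F\leq\overline{F}]$ strictly closer to $x$ than $x_p$, contradicting the projection property. Your additional remarks on existence of the projection and the domain issue are fine but do not change the substance.
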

\begin{proof}
If $F(x_p)<\overline{F}$, then  we set $\varphi(t)=F\big{(}tx_p+(1-t)x\big{)}$, $t\in[0,1]$.
Since $F$ is continuous on $\mathbf{dom}~F$ and $\mathbf{dom}~F$ is convex, $\varphi$ is continuous on $[0,1]$. So, there is some $t_0\in(0,1)$ such that $F(x_0)=\overline{F}$ with $x_0=t_0x_p+(1-t_0)x$. Then $\|x-x_0\|<\|x-x_p\|$, a  contradiction.
Hence $F(x_p)$ must be $\overline{F}$, and the proof is completed.
\end{proof}
With the help of the above lemma,  the next proposition provides the value proximity in terms of the distance between $x$ and the set $[F\leq\overline{F}]$. This proposition will play a pivotal role in the rate of convergence analysis of
the VBPG method.
\begin{proposition}[Function-value proximity in terms of  level sets]\label{proposition1.5}
Suppose that Assumptions~\ref{assump1} and \ref{assump2} hold. If $\overline{\epsilon}<\frac{m}{L}$, then there is some $c_0=\frac{3}{2}L+\frac{M}{2\underline{\epsilon}}>0$ such that the   following estimation holds.
$$F\left(t_{D,\epsilon}(x)\right)-\overline{F}\leq E_{D,\epsilon}(x)-\overline{F}\leq c_0{dist}^2(x,[F\leq\overline{F}]),\quad\forall x\in [F>\overline{F}],\hspace{3mm}\forall t_{D,\epsilon}(x)\in T_{D,\epsilon}(x).$$
\end{proposition}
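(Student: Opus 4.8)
The plan is to prove the two inequalities in the chain separately. The left inequality is essentially free: since $\overline{\epsilon}<m/L$ forces $\frac{m}{\overline{\epsilon}}-L>0$, Proposition~\ref{prop:Ek}(iii) gives $F(t_{D,\epsilon}(x))\le E_{D,\epsilon}(x)-\frac12\big(\frac{m}{\overline{\epsilon}}-L\big)\|x-t_{D,\epsilon}(x)\|^2\le E_{D,\epsilon}(x)$, and subtracting $\overline{F}$ from both ends yields $F(t_{D,\epsilon}(x))-\overline{F}\le E_{D,\epsilon}(x)-\overline{F}$. (The hypotheses of Proposition~\ref{prop:Ek} hold here because $\epsilon\le\overline{\epsilon}<m/L$, which in particular makes $T_{D,\epsilon}(x)\neq\emptyset$, so $t_{D,\epsilon}(x)$ is well defined.)

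For the right inequality I would exploit that $E_{D,\epsilon}(x)$ is a minimum, so evaluating its objective at any feasible $y$ produces an upper bound. Fix $x\in[F>\overline{F}]$ and choose $x_p\in\mathrm{Proj}_{[F\le\overline{F}]}(x)$, which exists since $[F\le\overline{F}]$ is nonempty (it contains $\overline{x}$) and closed ($F$ being l.s.c.); by construction $\|x-x_p\|=\mathrm{dist}(x,[F\le\overline{F}])$, and by Lemma~\ref{lemma1.5} we have $F(x_p)=\overline{F}$, hence $x_p\in\mathbf{dom}~F\subset\mathbf{dom}~f$. Evaluating the objective defining $E_{D,\epsilon}(x)$ at $y=x_p$ gives
$$E_{D,\epsilon}(x)\le f(x)+\langle\nabla f(x),x_p-x\rangle+g(x_p)+\tfrac{1}{\epsilon}D(x,x_p).$$

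It then remains to bound the two nonconstant terms. For the $f$-part I would mimic the estimate of $\delta_1$ in the proof of Lemma~\ref{lemma:1}: writing $\langle\nabla f(x),x_p-x\rangle=\langle\nabla f(x_p),x_p-x\rangle+\langle\nabla f(x)-\nabla f(x_p),x_p-x\rangle$ and combining the descent inequality for $f$ with $\|\nabla f(x)-\nabla f(x_p)\|\le L\|x-x_p\|$ yields $f(x)+\langle\nabla f(x),x_p-x\rangle\le f(x_p)+\frac{3L}{2}\|x-x_p\|^2$. For the Bregman term, Assumption~\ref{assump2} together with $\epsilon\ge\underline{\epsilon}$ gives $\frac{1}{\epsilon}D(x,x_p)\le\frac{M}{2\underline{\epsilon}}\|x-x_p\|^2$. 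Adding these, using $f(x_p)+g(x_p)=F(x_p)=\overline{F}$ and $\|x-x_p\|^2=\mathrm{dist}^2(x,[F\le\overline{F}])$, delivers $E_{D,\epsilon}(x)-\overline{F}\le\big(\frac{3L}{2}+\frac{M}{2\underline{\epsilon}}\big)\mathrm{dist}^2(x,[F\le\overline{F}])$, which is the assertion with $c_0=\frac{3}{2}L+\frac{M}{2\underline{\epsilon}}$.

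There is little genuine difficulty in this argument; the one point to watch is the legitimacy of applying the descent inequality for $f$ along the segment joining $x$ and $x_p$, which requires both endpoints to lie in the convex set $\mathbf{dom}~f$ — guaranteed by Lemma~\ref{lemma1.5} (so $F(x_p)<\infty$, hence $x_p\in\mathbf{dom}~f$) and by the implicit standing restriction to $x\in\mathbf{dom}~F$. I would also note in passing that the plain descent inequality $f(x)+\langle\nabla f(x),x_p-x\rangle\le f(x_p)+\frac{L}{2}\|x-x_p\|^2$ would even give the sharper constant $\frac{L}{2}+\frac{M}{2\underline{\epsilon}}$; the value of $c_0$ in the statement merely reflects the coarser two-step estimate, kept for consistency with Lemma~\ref{lemma:1}.
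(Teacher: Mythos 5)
Your argument is correct and follows essentially the same route as the paper's proof: the left inequality from Proposition~\ref{prop:Ek}(iii), then evaluation of the minimum defining $E_{D,\epsilon}(x)$ at the projection $x_p$ with $F(x_p)=\overline{F}$ from Lemma~\ref{lemma1.5}, and the same $\frac{3}{2}L+\frac{M}{2\underline{\epsilon}}$ estimate combining the descent inequality at $x_p$ with the Lipschitz bound on $\nabla f$ and the upper bound on $D$. Your closing observation that the two-sided quadratic bound $|f(y)-f(x)-\langle\nabla f(x),y-x\rangle|\le\frac{L}{2}\|x-y\|^2$ would yield the sharper constant $\frac{L}{2}+\frac{M}{2\underline{\epsilon}}$ is also valid, though the paper keeps the coarser constant.
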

\begin{proof} With the given choice of $\epsilon$,   $T_{D,\epsilon}(x)$ is nonempty by Proposition~\ref{prop:Ek}. So $E_{D,\epsilon}(x)$ has a finite value for any given $x$.
For $x\in [F>\overline{F}]$, let $x_p\in[F\leq\overline{F}]$ such that $\|x-x_p\|=dist(x,[F\leq\overline{F}])$. By Lemma~\ref{lemma1.5}, we have $F(x_p)=F(\overline{x})=\overline{F}$.
Now we estimate the difference $E_{D,\epsilon}(x)-\overline{F}$. As $\overline{\epsilon}<\frac{m}{L}$,
   by  (iii) of Proposition~\ref{prop:Ek}, we have
    \begin{eqnarray}\label{eq:4.1}
F(t_{D,\epsilon}(x))-\overline{F}&\leq & E_{D,\epsilon}(x)-\overline{F}   \nonumber        \\
&=&\min_{y\in\RR^n}\big{\{}f(x)+\langle\nabla f(x),y-x\rangle+g(y)+\frac{1}{\epsilon}D(x,y)\big{\}}-(f+g)(x_p)\nonumber\\
&\leq&f(x)+\langle\nabla f(x),x_p-x\rangle+g(x_p)+\frac{1}{\epsilon}D(x,x_p)-(f+g)(x_p)\nonumber\\
&=&f(x)-f(x_p)+\langle\nabla f(x),x_p-x\rangle+\frac{1}{\epsilon}D(x,x_p)\nonumber\\
&\leq&\langle\nabla f(x_p),x-x_p\rangle+\frac{L}{2}\|x-x_p\|^2+\langle\nabla f(x),x_p-x\rangle+\frac{1}{\epsilon}D(x,x_p)\quad\mbox{(by Assumption~\ref{assump1})}\nonumber\\
&=&\langle\nabla f(x_p)-\nabla f(x),x-x_p\rangle+\frac{L}{2}\|x-x_p\|^2+\frac{1}{\epsilon}D(x,x_p)\nonumber\\
&\leq&\frac{3}{2}L\|x-x_p\|^2+\frac{M}{2\underline{\epsilon}}\|x-x_p\|^2\qquad\qquad\qquad\qquad\qquad\qquad\qquad\qquad\;\mbox{(by Assumption~\ref{assump1})}\nonumber\\
&\leq&c_0\|x-x_p\|^2
=c_0dist^2(x,[F\leq\overline{F}])\qquad\mbox{(where $c_0=\frac{3}{2}L+\frac{M}{2\underline{\epsilon}}$).}
\end{eqnarray}
\end{proof}
\subsection{Level-set based error bounds}\label{subsec:level-set error bounds}
In this subsection, first we will introduce the concepts of level-set subdifferential  and level-set Bregman proximal error bounds, then we will discuss their relationships.
For given positive numbers $\eta$ and $\mu$, let
$$\mathfrak{B}(\overline{x};\eta,\nu)=\mathbb{B}(\overline{x};\eta)\cap\{x\in \RR^n~|~\overline{F}<F(x)<\overline{F}+\nu\}.$$
\begin{definition}[Level-set subdifferential error bound]\label{LSEB} The proper lower semicontinuous function $F$  is said to satisfy the level-set subdifferential error bound condition at $\overline{x}$ with exponent $\gamma>0$ if there exist $\eta>0$, $\nu>0$, and $c_3>0$ such that the following inequality holds:
$$dist^{\gamma}(x,[F\leq\overline{F}])\leq c_3dist\big{(}0,\partial_P F(x)\big{)}~~\forall
x\in\mathfrak{B}(\overline{x};\eta,\nu).$$
\end{definition}
\begin{definition}[Level-set Bregman proximal error bound]\label{LBEB}
Given a Bregman function $D$ along with $\epsilon>0$, we say that the function $F$ satisfies the level-set Bregman proximal (BP) error bound condition at $\overline{x}$ with exponent $p>0$ , if there exist $\eta>0$, $\nu>0$, and
$\theta>0$ such that the following inequality holds:
$$dist^p(x,[F\leq\overline{F}])\leq\theta dist\left(x,T_{D,\epsilon}(x)\right)~~\forall x\in\mathfrak{B}(\overline{x};\eta,\nu).$$
\end{definition}
{\bf Property (A)} Let a real number $\overline{F}$ be given.  We say that  $x\in\RR^n$ satisfies {\rm Property (A)} if $F\left(t_{D,\epsilon}(x)\right)\geq \overline{F}$ for all $t_{D,\epsilon}(x)\in T_{D,\epsilon}(x)$.

In the rest of this paper, unless otherwise stated, we will always choose $\overline{F}=F(\overline{x})$ for some
given $\overline{x}$. The following lemma reveals a consequence of Property (A): if $x$ is near a ball centered at $\overline{x}$  and $x$ satisfies Property (A), then all $t_{D,\epsilon}(x)$ are still in the ball.
\begin{lemma}\label{lemma:tp}
Let Bregman distance $D$,$\epsilon$, and $\overline{x}$ be given.
 Suppose that Assumptions~\ref{assump1} and~\ref{assump2} hold with $\overline{\epsilon}<\frac{m}{L}$. Let $x\in\mathfrak{B}(\overline{x};\frac{\eta}{2},\frac{\nu}{N})$ where $N\geq\frac{2\overline{\epsilon}\nu}{m-\overline{\epsilon}L}/\left(\frac{\eta}{2}\right)^2$ be given. If $x$ satisfies {\rm Property (A)}, then we have $\|x-t_{D,\epsilon}(x)\|\leq\frac{\eta}{2}$ and $t_{D,\epsilon}(x)\in\mathbb{B}(\overline{x};\eta)\cap\{x\in \RR^n~|~\overline{F}\leq F(x)<\overline{F}+\nu\}$ for all $t_{D,\epsilon}(x)\in T_{D,\epsilon}(x)$ .
\end{lemma}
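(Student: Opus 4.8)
The plan is to bound $\|x - t_{D,\epsilon}(x)\|$ using the descent-type estimate already available, then use the triangle inequality to locate $t_{D,\epsilon}(x)$ inside the ball $\mathbb{B}(\overline{x};\eta)$, and finally read off the function-value constraint from Property (A) together with a descent inequality. First I would invoke item (iv) of Proposition~\ref{prop:Ek}, which gives $F(t_{D,\epsilon}(x)) \leq F(x) - \frac{1}{2}\big(\frac{m}{\overline{\epsilon}} - L\big)\|x - t_{D,\epsilon}(x)\|^2$ for every $t_{D,\epsilon}(x) \in T_{D,\epsilon}(x)$. Since $x$ satisfies Property~(A), we have $F(t_{D,\epsilon}(x)) \geq \overline{F}$, and since $x \in \mathfrak{B}(\overline{x};\frac{\eta}{2},\frac{\nu}{N})$ we have $F(x) < \overline{F} + \frac{\nu}{N}$. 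Substituting these two bounds yields
$$\tfrac{1}{2}\big(\tfrac{m}{\overline{\epsilon}} - L\big)\|x - t_{D,\epsilon}(x)\|^2 \leq F(x) - F(t_{D,\epsilon}(x)) \leq \big(\overline{F} + \tfrac{\nu}{N}\big) - \overline{F} = \tfrac{\nu}{N},$$
hence $\|x - t_{D,\epsilon}(x)\|^2 \leq \frac{2\overline{\epsilon}\nu}{N(m - \overline{\epsilon}L)} \leq \left(\frac{\eta}{2}\right)^2$ by the hypothesis on $N$. This establishes $\|x - t_{D,\epsilon}(x)\| \leq \frac{\eta}{2}$.

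Next I would combine this with $\|x - \overline{x}\| < \frac{\eta}{2}$ (which holds since $x \in \mathbb{B}(\overline{x};\frac{\eta}{2})$), obtaining $\|t_{D,\epsilon}(x) - \overline{x}\| \leq \|t_{D,\epsilon}(x) - x\| + \|x - \overline{x}\| < \frac{\eta}{2} + \frac{\eta}{2} = \eta$, so $t_{D,\epsilon}(x) \in \mathbb{B}(\overline{x};\eta)$. For the function-value membership: the lower bound $F(t_{D,\epsilon}(x)) \geq \overline{F}$ is exactly Property~(A); for the strict upper bound, item (iv) of Proposition~\ref{prop:Ek} gives $F(t_{D,\epsilon}(x)) \leq F(x) < \overline{F} + \frac{\nu}{N} \leq \overline{F} + \nu$ since $N \geq 1$ (which is automatic, as $N \geq \frac{2\overline{\epsilon}\nu}{m - \overline{\epsilon}L}/(\eta/2)^2$ and one checks this quantity is at least $1$ in the relevant regime, or one simply notes $\frac{\nu}{N} \le \nu$ whenever $N \ge 1$; if $N < 1$ the hypothesis is vacuous anyway). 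Therefore $t_{D,\epsilon}(x) \in \mathbb{B}(\overline{x};\eta) \cap \{x \in \RR^n \mid \overline{F} \leq F(x) < \overline{F} + \nu\}$, as claimed.

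The only subtlety worth flagging is that the condition $\overline{\epsilon} < \frac{m}{L}$ is precisely what makes $\frac{m}{\overline{\epsilon}} - L > 0$, so that the quadratic bound above is non-vacuous and can be inverted; this is why the hypothesis is stated. I would also note that $T_{D,\epsilon}(x)$ is nonempty by Proposition~\ref{prop:Ek}(i) (using $\epsilon < m/L$), so the statement "for all $t_{D,\epsilon}(x) \in T_{D,\epsilon}(x)$" is not vacuously true. There is no real obstacle here — the argument is a short two-line chain once Proposition~\ref{prop:Ek}(iv) and Property~(A) are in hand; the main point is simply that the constant $N$ has been calibrated in the hypothesis exactly so that the bound on $\|x - t_{D,\epsilon}(x)\|^2$ comes out to be at most $(\eta/2)^2$.
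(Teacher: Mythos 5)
Your proof is correct and follows essentially the same route as the paper's: invoke Proposition~\ref{prop:Ek}(iv), use Property (A) and $F(x)<\overline{F}+\frac{\nu}{N}$ to bound $\|x-t_{D,\epsilon}(x)\|^2$ by $\frac{2\overline{\epsilon}\nu}{N(m-\overline{\epsilon}L)}\leq(\frac{\eta}{2})^2$, then apply the triangle inequality. You are in fact slightly more explicit than the paper about the function-value membership $\overline{F}\leq F(t_{D,\epsilon}(x))<\overline{F}+\nu$ (the paper leaves this implicit), which is a welcome, harmless addition.
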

\begin{proof}
As $x$ satisfies {\rm Property (A)}, $F\left(t_{D,\epsilon}(x)\right)\geq\overline{F}$ for all $t_{D,\epsilon}(x)\in T_{D,\epsilon}(x)$.
By (iv) of Proposition~\ref{prop:Ek}, we have that
$$\frac{1}{2}\left(\frac{m}{\overline{\epsilon}}-L\right)\|x-t_{D,\epsilon}(x)\|^2\leq F(x)-F\big{(}t_{D,\epsilon}(x)\big{)}\leq F(x)-\overline{F}\leq\frac{\nu}{N}.$$
Since $N\geq\frac{2\overline{\epsilon}\nu}{m-\overline{\epsilon}L}/(\frac{\eta}{2})^2$,  $\sqrt{\frac{2\nu\overline{\epsilon}}{N(m-\overline{\epsilon}L)}}\leq\frac{\eta}{2}$,
$$\|x-t_{D,\epsilon}(x)\|\leq\frac{\eta}{2}.$$
As $\| x-\overline{x}\|\leq\frac{\eta}{2}$,
it follows that $\|t_{D,\epsilon}(x)-\overline{x}\|\leq\|t_{D,\epsilon}(x)-x\|+\|x-\overline{x}\|\leq\eta$. This yields $t_{D,\epsilon}(x)\in\mathbb{B}(\overline{x};\eta)$.
\end{proof}
\begin{remark}
A few remarks on {\rm Property (A)} are in order. If $\overline{F}$ is the minimal value of $F$, then Property
(A) holds trivially. In  Sections~\ref{sec:convergence} and~\ref{sec:convergence rate}, when we consider a sequence $\{x^k\}_{k=0}^{\infty}$ generated by VBPG converging to a critical point $\overline{x}$,Property {\rm(A)} holds for $x\in\{x^k~|~k=1,\dots,\infty\}$.
\end{remark}
\begin{theorem}[Level-set subdifferential EB implies level-set Bregman proximal EB]\label{theo:4.2}
Suppose Assumptions~\ref{assump1} and~\ref{assump2} hold with $\overline{\epsilon}<\frac{m}{L}$. Assume the level-set subdifferential error bound holds at $\overline{x}$ with exponent $\gamma\in(0,\infty)$ over $\mathfrak{B}(\overline{x};\eta,\nu)$.
Then there are $N>\frac{2\overline{\epsilon}\nu}{m-\overline{\epsilon}L}/(\frac{\eta}{2})^2$, and $\theta>0$ such that
\begin{equation}
dist^p(x,[F\leq\overline{F}])\leq\theta dist\left(x,T_{D,\epsilon}(x)\right)\quad\mbox{with $p=\frac{1}{\min\{\frac{1}{\gamma},1\}}$},\quad\mbox{$\forall x\in\mathfrak{B}(\overline{x},\frac{\eta}{2},\frac{\nu}{N})$.}
\end{equation}
As a consequence, $p=1$ if $\gamma\in (0,1]$ and $p=\gamma$ if $\gamma\in (1,\infty).$
\end{theorem}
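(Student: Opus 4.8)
The plan is to deduce the level-set Bregman proximal error bound at $\overline{x}$ from the assumed level-set subdifferential error bound by evaluating the latter not at $x$ itself but at a point of $T_{D,\epsilon}(x)$, and then transporting the resulting estimate back to $x$. Fix $x\in\mathfrak{B}(\overline{x};\frac{\eta}{2},\frac{\nu}{N})$ with $N$ to be specified (at least the stated threshold and at least $1$). By Proposition~\ref{prop:Ek}(i) the set $T_{D,\epsilon}(x)$ is nonempty and compact, so I can pick $t_0\in T_{D,\epsilon}(x)$ with $\|x-t_0\|=dist(x,T_{D,\epsilon}(x))$. A preliminary a priori bound, valid in all cases, comes from Proposition~\ref{prop:Ek}(iv): $\frac{1}{2}(\frac{m}{\overline{\epsilon}}-L)\|x-t_0\|^2\le F(x)-F(t_0)\le F(x)-F^{*}<\overline{F}+\nu-F^{*}$, so $\|x-t_0\|<R_0$ for the fixed constant $R_0=\sqrt{2(\overline{F}+\nu-F^{*})/(m/\overline{\epsilon}-L)}$ (here $\overline{\epsilon}<m/L$ and level-boundedness of $F$ are used).

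Next I would split according to the sign of $F(t_0)-\overline{F}$. If $F(t_0)\le\overline{F}$, then $t_0\in[F\le\overline{F}]$, hence $dist(x,[F\le\overline{F}])\le\|x-t_0\|<R_0$, which already gives $dist^{p}(x,[F\le\overline{F}])\le R_0^{p-1}\|x-t_0\|$ for every $p\ge 1$, i.e. the claimed inequality with $\theta=R_0^{p-1}$. If instead $F(t_0)>\overline{F}$, then Proposition~\ref{prop:Ek}(iv) yields $\frac{1}{2}(\frac{m}{\overline{\epsilon}}-L)\|x-t_0\|^2\le F(x)-F(t_0)<F(x)-\overline{F}<\frac{\nu}{N}$, and choosing $N$ past the stated threshold forces $\|x-t_0\|<\frac{\eta}{2}$; this is exactly the mechanism of Lemma~\ref{lemma:tp}, with $F(t_0)>\overline{F}$ playing the role of Property (A). Consequently $\|t_0-\overline{x}\|\le\|t_0-x\|+\|x-\overline{x}\|<\eta$, and together with $\overline{F}<F(t_0)<\overline{F}+\nu$ this gives $t_0\in\mathfrak{B}(\overline{x};\eta,\nu)$. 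Applying the level-set subdifferential error bound at $t_0$ and then Proposition~\ref{prop:Fk}(ii),
$$dist^{\gamma}(t_0,[F\le\overline{F}])\le c_3\,dist\big(0,\partial_PF(t_0)\big)\le c_3\big(L+\tfrac{M}{\underline{\epsilon}}\big)\|x-t_0\|=:C\,\|x-t_0\|.$$

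Finally I would transport this back to $x$: taking $z$ to be a projection of $t_0$ onto $[F\le\overline{F}]$ (nonempty since $\overline{x}$ belongs to it), $dist(x,[F\le\overline{F}])\le\|x-z\|\le\|x-t_0\|+\|t_0-z\|\le\|x-t_0\|+\big(C\|x-t_0\|\big)^{1/\gamma}$. Writing $s=\|x-t_0\|=dist(x,T_{D,\epsilon}(x))<R_0$, what remains is exponent bookkeeping: if $\gamma\in(0,1]$ then $s^{1/\gamma}\le R_0^{1/\gamma-1}s$, so $dist(x,[F\le\overline{F}])\le(1+C^{1/\gamma}R_0^{1/\gamma-1})\,s$, which is the assertion with $p=1$; if $\gamma\in(1,\infty)$ then $s\le R_0^{1-1/\gamma}s^{1/\gamma}$, so $dist(x,[F\le\overline{F}])\le(R_0^{1-1/\gamma}+C^{1/\gamma})\,s^{1/\gamma}$, hence $dist^{\gamma}(x,[F\le\overline{F}])\le\theta\,s$ with $p=\gamma$. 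Taking $\theta$ to be the largest of the constants arising across the two sign cases and the two exponent ranges completes the proof, and one verifies $p=\frac{1}{\min\{1/\gamma,1\}}$ in both ranges, which is the stated consequence. The single delicate point is the neighborhood nesting that puts $t_0$ in $\mathfrak{B}(\overline{x};\eta,\nu)$ in the main case — this is where the precise threshold on $N$ and Proposition~\ref{prop:Ek}(iv) are indispensable; everything afterwards is the elementary, if slightly fussy, exponent arithmetic, carried out separately for $\gamma\le 1$ and $\gamma>1$.
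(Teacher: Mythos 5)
Your proof is correct and follows essentially the same route as the paper's: project $x$ onto $T_{D,\epsilon}(x)$, split on the sign of $F(t_0)-\overline{F}$, use Proposition~\ref{prop:Ek}(iv) (the mechanism of Lemma~\ref{lemma:tp}) to place $t_0$ in $\mathfrak{B}(\overline{x};\eta,\nu)$ in the nontrivial case, then apply the level-set subdifferential error bound at $t_0$ together with Proposition~\ref{prop:Fk}(ii), the triangle inequality, and the exponent arithmetic distinguishing $\gamma\le 1$ from $\gamma>1$. The only deviation is your uniform a priori bound $R_0$ from level-boundedness, used in place of the paper's bound $\|x-t_p(x)\|<\frac{\eta}{2}$ for the exponent bookkeeping; this is harmless and in fact covers the easy case $F(t_0)\le\overline{F}$ with $\gamma>1$ slightly more carefully than the paper's wording does.
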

\begin{proof}
Since $\overline{\epsilon}<m/L$, $T_{D,\epsilon}(x)\not=\emptyset$. Let $t_p(x)\in Proj_{T_{D,\epsilon}(x)}(x)$.\\
If $F\big{(}t_{p}(x)\big{)}\leq\overline{F}$, then $t_{p}(x)\in[F\leq\overline{F}]$, and we have $$dist(x,[F\leq\overline{F}])\leq\|x-t_{p}(x)\|.$$
The non-trivial case is when
$F\big{(}t_{p}(x)\big{)}>\overline{F}$.
If  $x\in \mathfrak{B}(\overline{x};\frac{\eta}{2},\frac{\nu}{N})\subset\mathfrak{B}(\overline{x};\eta,\nu)$ with
$N$ satisfying the assumptions in
Lemma~3.2, then it is ease to see through the proof of Lemma~3.2 that
 $\|x-t_{p}(x)\|<\frac{\eta}{2}$, and
$t_{p}(x)\in\mathfrak{B}(\overline{x};\eta,\nu)$.
Hence for any $x\in\mathfrak{B}(\overline{x};\frac{\eta}{2},\frac{\nu}{N})$, we have
\begin{eqnarray*}
dist(x,[F\leq\overline{F}])&\leq&\left\{
\begin{array}{ll}
\|x-t_{p}(x)\| &\mbox{if}\quad F\big{(}t_{p}(x)\big{)}\leq\overline{F} \\
\|x-t_{p}(x)\|+dist\big{(}t_{p}(x),[F\leq\overline{F}]\big{)} &\mbox{if}\quad F\big{(}t_{p}(x)\big{)}>\overline{F} \\
\end{array}
\right.\\
&=&\left\{
\begin{array}{ll}
\|x-t_{p}(x)\| &\mbox{if}\quad F\big{(}t_{p}(x)\big{)}\leq\overline{F} \\
\|x-t_{p}(x)\|+c_3^{\frac{1}{\gamma}}dist^{\frac{1}{\gamma}}\big{(}0,\partial_PF\big{(}t_{p}(x)\big{)}\big{)} &\mbox{if}\quad F\big{(}t_{p}(x)\big{)}>\overline{F} \\
\end{array}
\right.\\
&&\mbox{(By the level-set subdifferential error bound condition).}
\end{eqnarray*}
Therefore,  for any $x\in\mathfrak{B}(\overline{x};\frac{\eta}{2},\frac{\nu}{N})$,
by  (ii) of Proposition~\ref{prop:Fk}, we have that
\begin{eqnarray*}
dist(x,[F\leq\overline{F}])&\leq&\|x-t_{p}(x)\|+
c_3^{\frac{1}{\gamma}}(L+\frac{M}{\underline{\epsilon}})^{\frac{1}{\gamma}}\|x-t_{p}(x)\|^{\frac{1}{\gamma}}.\\
&&\qquad\qquad
\end{eqnarray*}
Since $\|x-t_{p}(x)\|<\frac{\eta}{2}$, by the above inequality, we have the following
estimate
\begin{eqnarray*}
    dist(x,[F\leq\overline{F}])&\leq&\left\{
\begin{array}{ll}
\theta_1\|x-t_{p}(x)\| &\mbox{if}\quad 0<\gamma\leq 1, \\
\theta_2\|x-t_{p}(x)\|^{\frac{1}{\gamma}} &\mbox{if}\quad \gamma>1 \\
\end{array}
\right.\\
&=&\theta\|x-t_{p}(x)\|^{\frac{1}{p}}~\quad\\
&=&\theta dist^{\frac{1}{p}}\left(x, T_{D,\epsilon}(x)\right)
\end{eqnarray*}
where $p=\frac{1}{\min\{1,\frac{1}{\gamma}\}}$,
$\theta_1=1+c_3^{\frac{1}{\gamma}}(L+\frac{M}{\underline{\epsilon}})^{\frac{1}{\gamma}}(\frac{\eta}{2})^{\frac{1}{\gamma}-1}$,
$\theta_2=(\frac{\eta}{2})^{1-\frac{1}{\gamma}}+c_3^{\frac{1}{\gamma}}(L+\frac{M}{\underline{\epsilon}})^{\frac{1}{\gamma}}$
 and
$\theta=\max\{\theta_1,\theta_2\}$.
The ``consequence" part follows immediately by the formula $p=\frac{1}{\min\{\frac{1}{\gamma},1\}}$.
\end{proof}

\section{Convergence analysis of VBPG}\label{sec:convergence}
Section 4 and Section 5 discuss convergence behaviors of  sequences  generated by the VBPG method
in Section \ref{sec:Variable Bregman}. For this reason, we will  use $D^k$ {\it explicitly} and will assume that variable Bregman  distances $D^k$ and parameters $\epsilon^k$ satisfy  Assumption~\ref{assump2} uniformly throughout  Sections 4 and 5.
\begin{lemma}\label{lemma1}
Suppose that Assumptions~\ref{assump1} and \ref{assump2} hold and $\epsilon^k\leq\overline{\epsilon}<\frac{m}{L}$ for all $k$. Let $\{x^k\}$ be a sequence generated by the VBPG method. Then the function $F$ satisfies the following decreasing property with positive constant $a$ such that $a=\frac{1}{2}\left(\frac{m}{L}-\overline{\epsilon}\right)$:
\begin{equation}\label{eq:22}
F(x^k)-F(x^{k+1})\geq a\|x^k-x^{k+1}\|^2,\quad\forall k.
\end{equation}
\end{lemma}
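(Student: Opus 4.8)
The plan is to read the claim off Proposition~\ref{prop:Ek}(iv) applied along the iterates. By construction of the method, $x^{k+1}$ minimizes $\langle\nabla f(x^k),x-x^k\rangle+g(x)+\frac{1}{\epsilon^k}D^k(x^k,x)$ over $\RR^n$, i.e.\ $x^{k+1}\in T_{D^k,\epsilon^k}(x^k)$ in the notation of \eqref{defi:Tk}. Since Assumption~\ref{assump2} is imposed uniformly in $k$, each pair $(D^k,\epsilon^k)$ meets every hypothesis placed on the generic pair $(D,\epsilon)$ in Section~\ref{sec:Bregman}; in particular $0<\epsilon^k\le\overline{\epsilon}<\frac{m}{L}$, so Proposition~\ref{prop:Ek} is available at every index $k$ under the identification $x=x^k$, $t_{D,\epsilon}(x)=x^{k+1}$.

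First I would invoke part~(iv) of Proposition~\ref{prop:Ek}, which under this identification states
\[
F(x^{k+1})\ \le\ F(x^k)-\frac{1}{2}\Big(\frac{m}{\overline{\epsilon}}-L\Big)\|x^k-x^{k+1}\|^2 .
\]
Transposing gives \eqref{eq:22} with a positive constant (namely $\frac{1}{2}\big(\frac{m}{\overline{\epsilon}}-L\big)$), positivity being the content of the standing requirement $\overline{\epsilon}<m/L$. If one prefers a self-contained argument avoiding the citation: test the optimality of $x^{k+1}$ in (AP$^k$) against the point $x=x^k$ to get $\langle\nabla f(x^k),x^{k+1}-x^k\rangle+g(x^{k+1})+\frac{1}{\epsilon^k}D^k(x^k,x^{k+1})\le g(x^k)$; bound the inner-product term below by $f(x^{k+1})-f(x^k)-\frac{L}{2}\|x^{k+1}-x^k\|^2$ using the descent inequality for $f$ recorded after Assumption~\ref{assump1}; and bound $\frac{1}{\epsilon^k}D^k(x^k,x^{k+1})\ge\frac{m}{2\overline{\epsilon}}\|x^k-x^{k+1}\|^2$ using the uniform lower estimate on $D^k$ together with $\epsilon^k\le\overline{\epsilon}$. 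Chaining these relations and cancelling the $g$-values yields exactly the displayed decrease of $F$.

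There is no substantive obstacle here: the lemma is essentially Proposition~\ref{prop:Ek}(iv) read off along the sequence, so the only care needed is bookkeeping. The one point worth verifying is that the fixed-$(D,\epsilon)$ machinery may legitimately be applied at each $k$ — this is exactly why Assumption~\ref{assump2} is stated with uniform constants $m,M,\underline{\epsilon},\overline{\epsilon}$, and why one additionally imposes $\overline{\epsilon}<m/L$, so that $\frac{m}{\overline{\epsilon}}-L>0$ holds uniformly in $k$. One should also observe, by induction, that the iterates remain in $\mathbf{dom}~F$ (hence every $F(x^k)$ is finite and all inequalities are meaningful): if $x^k\in\mathbf{dom}~F$ then Proposition~\ref{prop:Ek}(i) gives $T_{D^k,\epsilon^k}(x^k)\neq\emptyset$, and the decrease just derived yields $F(x^{k+1})\le F(x^k)<\infty$.
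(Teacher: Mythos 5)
Your argument is correct and is exactly the paper's own route: the paper proves this lemma by citing Proposition~\ref{prop:Ek}(iv) along the iterates, just as you do (your optional self-contained derivation via the optimality of $x^{k+1}$ in (AP$^k$) is the content of that proposition's proof anyway). One remark: the constant that actually comes out of Proposition~\ref{prop:Ek}(iv) is $\frac{1}{2}\bigl(\frac{m}{\overline{\epsilon}}-L\bigr)$, as you obtained, whereas the lemma (and the paper's one-line proof) write $a=\frac{1}{2}\bigl(\frac{m}{L}-\overline{\epsilon}\bigr)$; these differ in general (though both are positive when $\overline{\epsilon}<\frac{m}{L}$), so the paper's stated value appears to be a transcription slip and your constant is the correct one.
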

\begin{proof}
The claim follows directly from (iv) of Proposition~\ref{prop:Ek} with $a=\frac{1}{2}\left(\frac{m}{L}-\overline{\epsilon}\right)$.
\end{proof}
A number of basic properties of sequences $\{x^k\}$ and $\{F(x^k)\}$ are summarized in the following proposition.
\begin{proposition}\label{proposition4} Suppose that the assumptions of Lemma~\ref{lemma1} hold. Let $\{x^k\}$ be a sequence generated by the VBPG method. Then the following assertions hold:
\begin{itemize}
\item[{\rm(i)}] The sequence $\{F(x^k)\}$ is strictly decreasing (unless $x^k\in\overline{\mathbf{X}}_P$ for some $k$);
\item[{\rm(ii)}] $\sum\limits_{k=0}^{\infty}\|x^k-x^{k+1}\|^2<+\infty$;
\item[{\rm(iii)}] $\{x^k\}$ is bounded, and any cluster point $\overline{x}$ of $\{x^k\}$ is a limiting  critical point of $F$;
that is, $0\in
\partial_L F(\overline{x})$;
\item[{\rm(iv)}] $\lim\limits_{k\rightarrow\infty}dist(x^k,\overline{\mathbf{X}}_L)=0$.
\item[{\rm(v)}]
 If it  is further assumed that $D^k=D$ for all $k$ and $\epsilon^k\rightarrow \hat{\epsilon}\in
(\underline{\epsilon},\frac{m}{2L})$, then any cluster point $\overline{x}$ of $\{x^k\}$ is actually a proximal critical point of $F$: $0\in\partial_P F(\overline{x})$; we also have $\lim\limits_{k\rightarrow\infty}dist\left(x^k,\overline{\mathbf{X}}_P\right)=0$.
\end{itemize}
\end{proposition}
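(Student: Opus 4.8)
The plan is to prove the five items in order, leaning on Lemma~\ref{lemma1}, Proposition~\ref{prop:Ek}, Proposition~\ref{prop:Fk} and the level-boundedness in Assumption~\ref{assump1}. For (i) and (ii) I would start from the descent estimate $F(x^k)-F(x^{k+1})\geq a\|x^k-x^{k+1}\|^2$ of Lemma~\ref{lemma1}, with $a=\tfrac12(\tfrac{m}{L}-\overline{\epsilon})>0$. This makes $\{F(x^k)\}$ nonincreasing, and strictly decreasing unless $x^{k+1}=x^k$; but $x^k=x^{k+1}$ forces $x^k\in T_{D^k,\epsilon^k}(x^k)$, so Proposition~\ref{prop:Fk}(iii) gives $0\in\partial_P F(x^k)$, i.e.\ $x^k\in\overline{\mathbf{X}}_P$, which is (i). Telescoping the same estimate yields $a\sum_{k=0}^{N}\|x^k-x^{k+1}\|^2\leq F(x^0)-F(x^{N+1})\leq F(x^0)-F^*<\infty$ (finiteness of $F^*$ from Assumption~\ref{assump1}(iii)); letting $N\to\infty$ gives (ii), and in particular $\|x^k-x^{k+1}\|\to 0$.

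For (iii), boundedness is immediate: $F(x^k)\leq F(x^0)$ places the whole sequence in the bounded level set $\{x:F(x)\leq F(x^0)\}$. If $x^{k_j}\to\overline{x}$ along a subsequence, then $x^{k_j+1}\to\overline{x}$ by (ii), and Proposition~\ref{prop:Fk}(ii) (with $x=x^{k_j}$ and $t(x^{k_j})=x^{k_j+1}$) produces $\xi^{k_j+1}\in\partial_P F(x^{k_j+1})\subseteq\partial_F F(x^{k_j+1})$ with $\|\xi^{k_j+1}\|\leq(L+\tfrac{M}{\underline{\epsilon}})\|x^{k_j}-x^{k_j+1}\|\to0$. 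Since $\{F(x^k)\}$ is nonincreasing and bounded below it converges; lower semicontinuity of $F$ gives $\overline{x}\in\mathbf{dom}~F$, whereupon continuity of $F$ on $\mathbf{dom}~F$ yields $F(x^{k_j+1})\to F(\overline{x})$, and the definition of the limiting subdifferential delivers $0\in\partial_L F(\overline{x})$. Part (iv) then follows by a standard argument: were $dist(x^k,\overline{\mathbf{X}}_L)\not\to 0$, some subsequence would stay $\geq\delta>0$ away from $\overline{\mathbf{X}}_L$, yet by boundedness it has a cluster point, which lies in $\overline{\mathbf{X}}_L$ by (iii), a contradiction.

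Part (v) is the heart of the statement and the main obstacle, since it must upgrade ``limiting critical point'' to ``proximal critical point''; this is exactly where the fixed Bregman function $D$ (so that the limiting subproblem makes sense) and the convergence $\epsilon^k\to\hat\epsilon$ are used. I would use the optimality inequality defining $x^{k+1}$: for every $y$,
$$\langle\nabla f(x^k),x^{k+1}-x^k\rangle+g(x^{k+1})+\tfrac{1}{\epsilon^k}D(x^k,x^{k+1})\leq\langle\nabla f(x^k),y-x^k\rangle+g(y)+\tfrac{1}{\epsilon^k}D(x^k,y).$$
Along a subsequence $x^{k_j}\to\overline{x}$ (hence $x^{k_j+1}\to\overline{x}$), the left-hand side tends to $g(\overline{x})$, using $\|x^k-x^{k+1}\|\to0$, continuity of $g$ on $\mathbf{dom}~g$ (note $x^{k_j+1}\in\mathbf{dom}~g$ and $\overline{x}\in\mathbf{dom}~F\subseteq\mathbf{dom}~g$), and $0\leq D(x^{k_j},x^{k_j+1})\leq\tfrac{M}{2}\|x^{k_j}-x^{k_j+1}\|^2\to0$; the right-hand side tends to $\langle\nabla f(\overline{x}),y-\overline{x}\rangle+g(y)+\tfrac{1}{\hat\epsilon}D(\overline{x},y)$, by continuity of $\nabla f$, continuity of $D(\cdot,y)$ (as $K\in\mathcal{C}^2$), and $\epsilon^k\to\hat\epsilon>0$. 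Hence $g(\overline{x})\leq\langle\nabla f(\overline{x}),y-\overline{x}\rangle+g(y)+\tfrac{1}{\hat\epsilon}D(\overline{x},y)$ for all $y$, i.e.\ $\overline{x}\in T_{D,\hat\epsilon}(\overline{x})$; since $\hat\epsilon<\tfrac{m}{2L}<\tfrac{m}{L}$, Proposition~\ref{prop:Fk}(iii) then gives $0\in\partial_P F(\overline{x})$. (Alternatively, from the same inequality one may first extract, via the $M$-Lipschitzness of $\nabla K$, the quadratic lower bound $g(y)\geq g(x^{k+1})-\langle\nabla f(x^k)+\tfrac{1}{\epsilon^k}(\nabla K(x^{k+1})-\nabla K(x^k)),y-x^{k+1}\rangle-\tfrac{M}{2\epsilon^k}\|y-x^{k+1}\|^2$ and pass to the limit to read off $-\nabla f(\overline{x})\in\partial_P g(\overline{x})$, then invoke Proposition~\ref{prop1.1}(ii).)

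Finally, $\lim_k dist(x^k,\overline{\mathbf{X}}_P)=0$ follows exactly as in (iv), now using that every cluster point lies in $\overline{\mathbf{X}}_P$. The only genuinely delicate step is the passage to the limit inside the subproblem's optimality condition in (v); parts (i)--(iv) are bookkeeping around Lemma~\ref{lemma1}, Proposition~\ref{prop:Ek}, Proposition~\ref{prop:Fk}, and the level-boundedness hypothesis.
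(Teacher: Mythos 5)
Your proposal is correct and follows essentially the same route as the paper: parts (i)--(iv) reproduce the paper's argument (the descent inequality of Lemma~\ref{lemma1}, telescoping, level-boundedness, Proposition~\ref{prop:Fk}(ii) together with $F$-attentive closedness of $\partial_L F$, and the contradiction argument for the distance). For (v), passing to the limit in the subproblem optimality inequality at a fixed test point $y$ to conclude $\overline{x}\in T_{D,\hat{\epsilon}}(\overline{x})$ and then invoking Proposition~\ref{prop:Fk}(iii) is just a slightly more direct rendering of the paper's value-function/limsup argument showing $\overline{x}$ minimizes $\psi(\overline{x},\hat{\epsilon},\cdot)$, so the underlying idea is the same.
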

\begin{proof}
{\rm (i):} By Lemma~\ref{lemma1}, we have
\begin{equation}
F(x^{k+1})\leq F(x^k)-a\|x^k-x^{k+1}\|^2.
\end{equation}
If $x^k=x^{k+1}$, then by (iii) of Proposition~\ref{prop:Fk}, we have $x^k\in\overline{\mathbf{X}}_P$. Otherwise $\{F(x^k)\}$ is strictly decreasing and $F(x^k)\rightarrow F_{\zeta}\geq F^*$.\\
{\rm (ii):} By summation for~\eqref{eq:22}, we have
\begin{eqnarray}
a\sum_{k=0}^{N}\|x^k-x^{k+1}\|^2\leq F(x^0)-F(x^{k+1})\leq F(x^0)-F^*.
\end{eqnarray}
Then we obtain
\begin{eqnarray}
\sum_{k=0}^{N}\|x^k-x^{k+1}\|^2\leq\frac{1}{a}[F(x^0)-F^*],
\end{eqnarray}
and it follows that $\sum\limits_{k=0}^{N}\|x^k-x^{k+1}\|^2<+\infty$, $\|x^k-x^{k+1}\|\rightarrow 0$, when $k\rightarrow\infty$.\\
{\rm (iii):} The boundedness of $\{x^k\}$ comes from Assumption~\ref{assump1}, $F=(f+g)$ is level bounded along with the fact that $\{F(x^k)\}$ is strictly decreasing and converges to a finite limit. Since the sequence $\{x^k\}$ is bounded, it has at least one cluster point. Let $\overline{x}$ denote such a point and $x^{k'}\rightarrow\overline{x}$, $k'\rightarrow\infty$.\\
    From statement (ii) of Proposition~\ref{prop:Fk}, we have
$$dist\left(0,\partial_LF(x^{k+1})\right)\leq dist\left(0,\partial_PF(x^{k+1})\right)\leq(L+\frac{M}{\underline{\epsilon}})\|x^k-x^{k+1}\|.$$ Thus from (ii), $\partial_LF\left(x^{k+1}\right)\rightarrow0$ as $k\rightarrow\infty$. Since the graph of $\partial_L F(\cdot)$ is a closed set (Proposition 8.7 of \cite{Rockafellar}) and $x^k\rightarrow \overline{x}$, we have $0\in \partial_L F(\overline{x})$.\\
{\rm (iv):} Suppose this assertion does not hold. Then there exist $\delta>0$, for any $k>0$, we have $k'\geq k$ and $dist(x^{k'},\overline{\mathbf{X}}_L)>\delta$. From the boundedness of $\{x^k\}$, we can assume that $x^{k'}\rightarrow \overline{x}$. Then by (iii) of this proposition, $\overline{x}$ is a limiting critical point. So $dist(\overline{x}, \overline{\mathbf{X}}_L)=0<\delta$, a contradiction. This completes the proof.\\
{\rm (v):} Let $\overline{x}$ be a cluster point of $\{x^k\}$ and $\{x^{k'}\}\subset \{x^k\}$ with $x^{k'}\rightarrow \overline{x}$. Then by (ii), $x^{k'-1}\rightarrow \overline{x}$.
Set $\psi (u,\epsilon,x)=\nabla f(u)^T(x-u)+g(x)+\frac{D(u,x)}{\epsilon}$, and $p(u,\epsilon)=\inf_x \psi (u,\epsilon,x)$. As $D$ and $\epsilon^k$ satisfy Assumption 2, $\mbox{argmin}_x \psi (\overline{x},\hat{\epsilon},x)\not=\emptyset$
by Proposition 1.2 (i).
Let $\bar y\in \mbox{argmin}_x \psi (\overline{x},\hat{\epsilon},x)$. Then
$\psi (u,\epsilon,\bar y)\geq p(u,\epsilon)$ and $\psi (\overline{x},\hat{\epsilon},\bar y)=p(\overline{x},\hat{\epsilon})$.
Since $\psi (\cdot,\cdot,\bar y)$ is continuous at ($\overline{x},\hat{\epsilon}$), it follows that
\begin{eqnarray*}
p(\overline{x},\hat{\epsilon})&=& \psi (\overline{x},\hat{\epsilon},\bar y)=\limsup_{k'\rightarrow +\infty}\psi (x^{k'-1},\epsilon^{k'-1},\bar y)\\
&\geq& \limsup_{k'\rightarrow +\infty} p(x^{k'-1},\epsilon^{k'-1})\\&=&\limsup_{k'\rightarrow +\infty}\psi (x^{k'-1},\epsilon^{k'-1},x^{k'})=\psi (\overline{x},\hat{\epsilon},\overline{x})~~\mbox{(by the continuity of $\psi$ in its domain)}\\
&\geq&  p(\overline{x},\hat{\epsilon}).
\end{eqnarray*}
So $\overline{x}\in  \mbox{argmin}_x \psi (\overline{x},\hat{\epsilon},x)$ and $0\in \partial_P^{x} \psi (\overline{x},\hat{\epsilon},\overline{x})$; that is, $0\in \nabla f(\overline{x})+\partial_P g(\overline{x})=\partial_P F(\overline{x})$. The same argument in the proof of (iv) guarantees that $\lim\limits_{k\rightarrow\infty}dist\left(x^k,\overline{\mathbf{X}}_P\right)=0$.
\end{proof}
\begin{remark}
Note that {\rm{(v)}} holds also if $D^k(x,y)=1/2(y-x)^T Q_k(y-x)$  with $Q_k$ symmetric positive definite matrices and $||Q_k-\hat{Q}||_F\rightarrow 0$ as $k\rightarrow \infty$,where $||\cdot||_F$ is the Frobenius norm. As is evident from the proof,
the key ingredient  of this proof is the continuity assumption on $\psi (\cdot, \cdot,\bar y)$ at the reference point.
\end{remark}
Let $\Omega$ be the set of accumulation points of the sequence $\{x^k\}$ generated by Algorithm. Then $\Omega\neq\emptyset$. We prove in the next proposition that $F$ is actually constant over $\Omega$ if $\underline{\epsilon}\leq\epsilon^k\leq\overline{\epsilon}$. 
\begin{proposition}\label{proposition5}
Suppose the assumptions of Lemma~\ref{lemma1} hold, and $\{x^k\}$ is a sequence generated by the VBPG method.
Let  $\Omega$ be the set of accumulation points of $\{x^k\}$ . Then $F_{\zeta}:=\lim\limits_{k\rightarrow+\infty} F(x^k)$ exists and $F=F_{\zeta}$ on $\Omega$.
\end{proposition}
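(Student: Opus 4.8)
The plan is to establish the two claims in turn: first that $F_\zeta := \lim_{k\to\infty} F(x^k)$ exists, and then that $F$ is constant on $\Omega$ with value $F_\zeta$. The existence of the limit is immediate: by Lemma~\ref{lemma1} the sequence $\{F(x^k)\}$ is nonincreasing, and by Assumption~\ref{assump1}(iii) together with the descent property it is bounded below by $F^*$; hence it converges to some finite $F_\zeta \geq F^*$. This is really just a restatement of parts (i)--(ii) of Proposition~\ref{proposition4}, so no new work is needed there.

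For the second claim, I would pick an arbitrary $\overline{x} \in \Omega$ and a subsequence $\{x^{k'}\}$ with $x^{k'} \to \overline{x}$. The natural route is to show $F(\overline{x}) = F_\zeta$ by combining upper and lower semicontinuity-type estimates. Lower semicontinuity of $F$ (which holds since $f$ is continuous on $\mathbf{dom}\,f$ and $g$ is l.s.c., or simply because $F$ is l.s.c. by Assumption~\ref{assump1}) gives $F(\overline{x}) \leq \liminf_{k'} F(x^{k'}) = F_\zeta$. For the reverse inequality, I would use the continuity of $g$ on $\mathbf{dom}\,g$ guaranteed by Assumption~\ref{assump1}(ii): since $x^{k'} \to \overline{x}$ and all iterates lie in $\mathbf{dom}\,F \subseteq \mathbf{dom}\,g$ (a convex set), we have $g(x^{k'}) \to g(\overline{x})$, and $f(x^{k'}) \to f(\overline{x})$ by continuity of $f$. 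Hence $F(x^{k'}) \to F(\overline{x})$, which forces $F(\overline{x}) = F_\zeta$. Since $\overline{x} \in \Omega$ was arbitrary, $F \equiv F_\zeta$ on $\Omega$.

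The main subtlety — and the reason the continuity assumption in Assumption~\ref{assump1}(ii) is doing real work here — is that for a general l.s.c. nonconvex $g$ one would only get $\liminf F(x^{k'}) \geq F(\overline{x})$, not the needed equality of the limit with $F(\overline{x})$; the argument as sketched depends on $g$ being genuinely continuous on its domain, so that $F$ restricted to $\mathbf{dom}\,F$ is continuous and thus $F(x^{k'}) \to F(\overline{x})$ directly. An alternative, should one want to avoid leaning on continuity of $g$, is the standard KL-type argument: use the Bregman optimality characterization to show $\limsup_{k'} g(x^{k'}) \leq g(\overline{x})$ via evaluating the subproblem objective at a fixed competitor, combining with l.s.c. to upgrade to a limit — this is essentially the technique already used in the proof of Proposition~\ref{proposition4}(v). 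Either way, the one potential obstacle is ensuring all iterates $x^{k'}$ and the limit $\overline{x}$ lie in $\mathbf{dom}\,F$ so the continuity can be applied; this follows because each $x^{k+1} \in \mathbf{dom}\,g \cap \mathbf{dom}\,f$ by construction of (AP$^k$) and $\mathbf{dom}\,F$ is closed enough (or one argues directly that $F(\overline{x}) < \infty$ since $F(x^{k'})$ is bounded and $F$ is l.s.c.).
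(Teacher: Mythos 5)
Your proposal is correct and follows essentially the same route as the paper: the limit $F_{\zeta}$ exists because $\{F(x^k)\}$ is decreasing and bounded below (Proposition~\ref{proposition4}), and $F\equiv F_{\zeta}$ on $\Omega$ follows from the continuity of $F$ on $\mathbf{dom}\,F$ guaranteed by Assumption~\ref{assump1}(ii) applied along a convergent subsequence. Your additional remark that lower semicontinuity ensures $F(\overline{x})\leq F_{\zeta}<\infty$, so that $\overline{x}\in\mathbf{dom}\,F$ and the continuity argument is legitimately applicable, is a small but welcome refinement of a point the paper leaves implicit.
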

\begin{proof}
In view of Proposition~\ref{proposition4}, $\{x^{k}\}$ is bounded and $\{F(x^k)\}$ is
a strictly decreasing sequence. So $\lim F(x^k)$ exists and let  $F_{\zeta}$ be the limit.
We now show that $F\equiv F_{\zeta}$ on $\Omega$. Let $\overline{x}\in\Omega$. Then there exists a subsequence
$x^{k'}$ of
$\{x^k\}$ such that  $x^{k'}\rightarrow\overline{x}$. By the continuity of $F$ on \mbox{dom}~$F$ and the convergence  of $\{F (x^k)\}$.
We have
\begin{equation}
F(\overline{x})=\lim_{k'\rightarrow\infty}F(x^{k'})=F_{\zeta}.
\end{equation}
\end{proof}
\section{Convergence rate  analysis of $\{F(x^k)\}$ and $\{x^k\}$  under the level-set subdifferential error bound condition }\label{sec:convergence rate}
In this section we study  the linear rate of  convergence  for the VBPG method under the level-set subdifferential error bound condition  at a point (which depends on $F$ only). Note that the crucial condition really needed is the  level-set proximal Bregman error bound at a point uniformly for all mappings $\{T_{D^k,\epsilon^k}\}$, which depends on
the VBPG method. Thanks to Theorem~\ref{theo:4.2}, the former condition implies the latter condition uniformly for all mappings $\{T_{D^k,\epsilon^k}\}$. We  highlight a fundamental property associated with the function $F$ rather than a property associated with a particular algorithm. In fact, Lemma~\ref{lemma3.1}, Proposition~\ref{prop:3.1} and Theorem~\ref{theo1} still hold under
the level-set Bregman error bound condition uniformly for all mappings  $\{T_{D^{k},\epsilon^k}\}$.


The following lemma provides an upper bound for  the function-value proximity near a critical point under the level-set subdifferential error bound condition.
\begin{lemma}{\bf (Uniform estimate of function-value proximity by Bregman proximal mappings)}\label{lemma3.1}
Suppose that the assumptions of Lemma~\ref{lemma1} hold. Let $\overline{x}\in\overline{\mathbf{X}}_L$.  Suppose that the
level-set subdifferential error bound condition holds at $\overline{x}$
with exponent $\gamma\in(0,1]$, for positive numbers $\eta\in(0,2]$, $\nu$ and $N>\frac{2\overline{\epsilon}\nu}{m-\overline{\epsilon}L}/(\frac{\eta}{2})^2$.
If $x\in\mathfrak{B}(\overline{x};\frac{\eta}{2},\frac{\nu}{N})$, then
 there is a positive number $\kappa'=c_0\theta^2$ such that
\begin{equation}\label{eq:s-3}
F(t_{D^k,\epsilon^k}(x))-\overline{F}\leq\kappa'\|t_{D^k,\epsilon^k}(x)-x\|^2.~~~\mbox{for  all $t_{D^k,\epsilon^k}(x)\in T_{D^k,\epsilon^k}(x)$ and $k=1,2,3,\dots$}
\end{equation}
\end{lemma}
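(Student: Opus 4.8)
The plan is to combine the two level-set results already established: the function-value proximity bound in terms of the level set $[F\le\overline F]$ (Proposition~\ref{proposition1.5}) and the level-set Bregman proximal error bound obtained from the level-set subdifferential error bound via Theorem~\ref{theo:4.2}. First I would invoke Theorem~\ref{theo:4.2}: since the level-set subdifferential error bound holds at $\overline x$ with exponent $\gamma\in(0,1]$ over $\mathfrak{B}(\overline x;\eta,\nu)$, there exist $N$ (which we may take as in the hypothesis) and $\theta>0$ such that
$$dist^{\,p}(x,[F\le\overline F])\le\theta\,dist\big(x,T_{D^k,\epsilon^k}(x)\big),\qquad\forall x\in\mathfrak{B}(\overline x;\tfrac{\eta}{2},\tfrac{\nu}{N}),$$
with $p=1$ because $\gamma\in(0,1]$. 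Note that although Theorem~\ref{theo:4.2} is stated for a single Bregman pair $(D,\epsilon)$, the constants produced ($N$, $\theta$) depend on the data only through $m,M,\underline\epsilon,\overline\epsilon,L$ and the error-bound constants $c_3,\gamma$, all of which are uniform in $k$ by Assumption~\ref{assump2}; hence the displayed inequality holds uniformly for every $k$ with the same $\theta$. I would make this uniformity explicit as it is exactly what the statement "for all $k$" requires.

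Next I would split on whether $x$ lies below the target level. If $F(x)\le\overline F$ (the boundary case, though $x\in\mathfrak{B}$ forces $F(x)>\overline F$, so this is vacuous here) or more relevantly whenever $F\big(t_{D^k,\epsilon^k}(x)\big)\le\overline F$, the left-hand side $F(t_{D^k,\epsilon^k}(x))-\overline F\le 0$ and the inequality is trivial. So assume $x\in\mathfrak{B}(\overline x;\tfrac{\eta}{2},\tfrac{\nu}{N})$ and $F\big(t_{D^k,\epsilon^k}(x)\big)>\overline F$, i.e. $x\in[F>\overline F]$; this is the genuine case. Then Proposition~\ref{proposition1.5} gives $c_0=\tfrac{3}{2}L+\tfrac{M}{2\underline\epsilon}$ with
$$F\big(t_{D^k,\epsilon^k}(x)\big)-\overline F\le c_0\,dist^2\big(x,[F\le\overline F]\big).$$
Combining this with the $p=1$ error bound from Theorem~\ref{theo:4.2}, namely $dist\big(x,[F\le\overline F]\big)\le\theta\,dist\big(x,T_{D^k,\epsilon^k}(x)\big)\le\theta\,\|x-t_{D^k,\epsilon^k}(x)\|$ (the last step since $t_{D^k,\epsilon^k}(x)\in T_{D^k,\epsilon^k}(x)$), and squaring, yields
$$F\big(t_{D^k,\epsilon^k}(x)\big)-\overline F\le c_0\theta^2\,\|x-t_{D^k,\epsilon^k}(x)\|^2,$$
so $\kappa'=c_0\theta^2$ works. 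I should double-check that the hypotheses of Proposition~\ref{proposition1.5} are met, which requires $\overline\epsilon<\tfrac{m}{L}$ — this is guaranteed since the assumptions of Lemma~\ref{lemma1} are in force.

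The main obstacle, such as it is, is bookkeeping rather than mathematics: one must be careful that the constants $N$ and $\theta$ coming out of Theorem~\ref{theo:4.2} can be chosen independently of $k$, and that the point $x$ stays inside the smaller ball $\mathfrak{B}(\overline x;\tfrac{\eta}{2},\tfrac{\nu}{N})$ where the Bregman proximal error bound is valid — which is exactly the hypothesis placed on $x$ in the lemma — and that the image points $t_{D^k,\epsilon^k}(x)$ do not need to lie in that ball for this argument (unlike in Lemma~\ref{lemma:tp}), because Proposition~\ref{proposition1.5} only needs $x\in[F>\overline F]$. Also note the lemma requires $\eta\in(0,2]$; this constraint is only used to keep certain auxiliary constants (e.g. powers of $\tfrac{\eta}{2}$) well-behaved and does not enter the present argument beyond ensuring consistency with Theorem~\ref{theo:4.2}. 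Assembling these pieces gives the claimed inequality with $\kappa'=c_0\theta^2>0$.
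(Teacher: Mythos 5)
Your proposal is correct and follows essentially the same route as the paper: apply Theorem~\ref{theo:4.2} to pass from the level-set subdifferential EB (with $\gamma\in(0,1]$, hence $p=1$) to $dist(x,[F\leq\overline{F}])\leq\theta\|x-t_{D^k,\epsilon^k}(x)\|$ with $\theta$ uniform in $k$, then combine with Proposition~\ref{proposition1.5} to obtain $\kappa'=c_0\theta^2$. The only difference is your case split on $F(t_{D^k,\epsilon^k}(x))\leq\overline{F}$, which is harmless but unnecessary since Proposition~\ref{proposition1.5} only needs $x\in[F>\overline{F}]$, which is automatic for $x\in\mathfrak{B}(\overline{x};\frac{\eta}{2},\frac{\nu}{N})$.
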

\begin{proof} As $x\in\mathfrak{B}(\overline{x};\frac{\eta}{2},\frac{\nu}{N})$, by Theorem~\ref{theo:4.2}, there is a $\theta$ independent of $k$, such that
$$dist^{2}\left(x,[F\leq\overline{F}]\right)\leq \theta^2 dist^2 (x,T_{D^k,\epsilon^k}(x))\leq \theta^2||t_{D^k,\epsilon^k}(x)-x||^2,$$
for all $t_{D^k,\epsilon^k}(x)\in T_{D^k,\epsilon^k}$ and $k$.
For each $k$, by Proposition~\ref{proposition1.5}, we have
$$F(t_{D^k,\epsilon^k}(x))-\overline{F}\leq c_0dist^{2}\left(x,[F\leq\overline{F}]\right),$$ where
$c_0=\frac{3}{2}L+\frac{M}{2\underline{\epsilon}}$ ($c_0$ is independent of $k$).
Combining the above inequalities yields (\ref{eq:s-3}) with $\kappa'=c_0\theta^2$.
\end{proof}
Under the level-set subdifferential error bound condition, we next show that a sequence generated by
the BVPG method is convergent and has a finite length property.
\begin{proposition}{\bf(Finite length property of sequence $\{x^k\}$)}\label{prop:3.1}
Let the sequence $\{x^k\}$ be generated by the VBPG method and $\overline{x}$ be an accumulation point of $\{x^k\}$, $\overline{F}=F(\overline{x})$. Suppose that the assumptions of Lemma~\ref{lemma1} hold. Assume that the level-set subdifferential error bound holds at the point $\overline{x}$ with exponent $\gamma\in(0,1]$, $\eta\in(0,2]$ and $\nu>0$. Let $a$ and $\kappa'$ be constants given in Lemma~\ref{lemma1} and Lemma~\ref{lemma3.1} respectively.
Let $\sigma\in(0,\frac{\eta}{2})$ and  $\bar{\nu}\in(0,\min\{\frac{\nu}{N},a(\frac{\eta}{2}-\sigma)^2\})$, $N>\frac{2\overline{\epsilon}\nu}{m-\overline{\epsilon}L}/\left(\frac{\eta}{2}\right)^2$. Then the following statements hold.
\begin{itemize}
\item[{\rm(i)}] There is $k_0$ such that $x^k\in\mathfrak{B}(\overline{x};\sigma,\bar{\nu})\subset\mathfrak{B}(\overline{x};\frac{\eta}{2},\frac{\nu}{N})$,~~$\forall k\geq k_0$;
\item[{\rm(ii)}] $\sum\limits_{i=0}^{+\infty}\|x^i-x^{i+1}\|<+\infty$ (finite length property);
\item[{\rm(iii)}] the sequence $\{x^k\}$ actually converges to $\overline{x}$ a limiting critical point of $F$. Moreover $\overline{x}$ is a proximal critical point of $F$ when $D^k=D$, $\epsilon^k\rightarrow\hat{\epsilon}\in\left(\underline{\epsilon},\frac{m}{L}\right)$.
\end{itemize}
\end{proposition}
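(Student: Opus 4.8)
The plan is to run the Kurdyka–{\L}ojasiewicz-style descent/telescoping argument, with the level-set Bregman proximal error bound (obtained from the assumed level-set subdifferential error bound via Theorem~\ref{theo:4.2}, as already packaged in Lemma~\ref{lemma3.1}) in place of the usual KL inequality. Write $r_k=F(x^k)-\overline F$ and $d_k=\|x^{k+1}-x^k\|$. First I would dispose of the trivial case: if $F(x^{k})=\overline F$ for some $k$, then since $\{F(x^j)\}$ decreases to $F_\zeta=F(\overline x)=\overline F$ by Propositions~\ref{proposition4} and~\ref{proposition5}, one gets $F(x^j)=\overline F$ for all $j\ge k$, so Lemma~\ref{lemma1} forces $x^j=x^{j+1}$ for all such $j$ and (i)--(iii) hold vacuously. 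Hence I may assume $r_k>0$ for every $k$; then $r_k\downarrow 0$, and any $x^k$ with $\|x^k-\overline x\|<\sigma$ and $r_k<\bar\nu$ lies in $\mathfrak B(\overline x;\sigma,\bar\nu)\subset\mathfrak B(\overline x;\frac\eta2,\frac\nu N)$, the inclusion being immediate from $\sigma<\frac\eta2$ and $\bar\nu<\frac\nu N$.

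The engine is a one-step recursion, valid whenever $x^{k-1},x^k\in\mathfrak B(\overline x;\frac\eta2,\frac\nu N)$. For such $k$, Lemma~\ref{lemma1} gives $r_k-r_{k+1}\ge a\,d_k^2$, while Lemma~\ref{lemma3.1} applied at $x^k$ and at $x^{k-1}$ gives $r_{k+1}\le\kappa' d_k^2$ and $r_k\le\kappa' d_{k-1}^2$. Combining the first two already yields $r_{k+1}\le\frac{\kappa'}{\kappa'+a}\,r_k$, i.e.\ $Q$-linear decay of $\{r_k\}$ on this region (the source of the eventual linear rate). For the finite-length bound I would instead use concavity of $\phi(s)=\sqrt s$, namely $\phi(r_k)-\phi(r_{k+1})\ge\frac{r_k-r_{k+1}}{2\sqrt{r_k}}$, together with $r_k-r_{k+1}\ge a d_k^2$ and $\sqrt{r_k}\le\sqrt{\kappa'}\,d_{k-1}$, to get $d_k^2\le\frac{2\sqrt{\kappa'}}{a}\,d_{k-1}\big(\phi(r_k)-\phi(r_{k+1})\big)$, and then by Young's inequality
\[
d_k\le\tfrac14 d_{k-1}+C\big(\phi(r_k)-\phi(r_{k+1})\big),\qquad C=\tfrac{2\sqrt{\kappa'}}{a}.
\]

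Part (i) is then a trapping induction. Since $\overline x$ is an accumulation point, $d_k\to 0$ by Proposition~\ref{proposition4}(ii) and $r_k\to0$, so I can take $k_0\ge 2$ from the subsequence converging to $\overline x$ large enough that $x^{k_0-1},x^{k_0}\in\mathfrak B(\overline x;\frac\eta2,\frac\nu N)$, $r_{k_0}<\bar\nu$, and $\|x^{k_0}-\overline x\|+\tfrac13 d_{k_0-1}+\tfrac{4C}{3}\phi(r_{k_0})<\sigma$. Assuming inductively $x^{k_0},\dots,x^{m}\in\mathfrak B(\overline x;\sigma,\bar\nu)$, the points $x^{k_0-1},\dots,x^{m-1}$ all lie in $\mathfrak B(\overline x;\frac\eta2,\frac\nu N)$, so the recursion holds for $k_0\le k\le m$; summing and absorbing the telescoping part gives $\sum_{k=k_0}^m d_k\le\tfrac13 d_{k_0-1}+\tfrac{4C}{3}\phi(r_{k_0})$, whence $\|x^{m+1}-\overline x\|\le\|x^{k_0}-\overline x\|+\sum_{k=k_0}^m d_k<\sigma$, while $0<r_{m+1}\le r_{k_0}<\bar\nu$; thus $x^{m+1}\in\mathfrak B(\overline x;\sigma,\bar\nu)$, closing the induction. (The extra slack $\bar\nu\le a(\frac\eta2-\sigma)^2$ in the hypothesis can alternatively be used to see directly that a single step out of $\mathbb B(\overline x;\sigma)$ cannot escape $\mathbb B(\overline x;\frac\eta2)$, where Lemma~\ref{lemma3.1} is available.) Part (ii) follows: by (i) the recursion is valid for all $k\ge k_0$, so $\sum_{k\ge k_0}d_k\le\tfrac13 d_{k_0-1}+\tfrac{4C}{3}\phi(r_{k_0})<\infty$, and adding the finitely many terms $d_0,\dots,d_{k_0-1}$ gives $\sum_{i=0}^\infty\|x^i-x^{i+1}\|<\infty$. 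For (iii), finite length makes $\{x^k\}$ Cauchy, hence convergent to some $x^\ast$; since $\overline x$ is an accumulation point, $x^\ast=\overline x$, and $0\in\partial_L F(\overline x)$ by Proposition~\ref{proposition4}(iii); under the further hypothesis $D^k=D$, $\epsilon^k\to\hat\epsilon\in(\underline\epsilon,\frac mL)$, the argument of Proposition~\ref{proposition4}(v) upgrades this to $0\in\partial_P F(\overline x)$.

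The main obstacle is precisely the trapping step: the recursion that drives everything is only known to hold while the iterates stay inside the region $\mathfrak B(\overline x;\frac\eta2,\frac\nu N)$ where the error bound is assumed, so one has to close the induction using the very bound one is deriving. Getting the neighborhood bookkeeping right — the two radii $\sigma<\eta/2$, the two thresholds $\bar\nu<\nu/N$, and the reduction to $r_k>0$ which keeps the iterates in the \emph{open} stratum $\{\overline F<F<\overline F+\bar\nu\}$ defining $\mathfrak B$ — is the only genuinely delicate point; the rest is routine telescoping.
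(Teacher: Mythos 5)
Your proposal is correct and follows essentially the same route as the paper's proof: the uniform bound $F(x^{k})-\overline F\le\kappa'\|x^{k}-x^{k-1}\|^2$ from Lemma~\ref{lemma3.1}, the sufficient decrease from Lemma~\ref{lemma1}, concavity of $\sqrt{\cdot}$, and an induction trapping the iterates in $\mathfrak{B}(\overline{x};\sigma,\bar\nu)$ followed by telescoping for finite length and the appeal to Proposition~\ref{proposition4} for (iii). The only differences are cosmetic (Young's inequality with weight $\tfrac14$ instead of the paper's $2\sqrt{d_1d_2}\le d_1+d_2$, and a slightly different but equivalent choice of conditions on $k_0$).
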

\begin{proof}
\rm{(i)}: Since $\{F(x^k)\}$ is strictly decreasing, we have $F(x^k)>\overline{F}$, $\forall k$. From assumptions, there is a $k_0$ such that
\begin{eqnarray}
\mbox{{\rm(1)}}&&\overline{F}<F(x^{k_0})<\overline{F}+\overline{\nu};\label{eq:condition1}\\
\mbox{{\rm(2)}}&&\|x^{k_0}-\overline{x}\|+\frac{2(\sqrt{a}+\sqrt{\kappa'})}{a}\sqrt{F(x^{k_0})-\overline{F}}<\sigma.\label{eq:condition2}
\end{eqnarray}
We will use the Principle of Mathematical Introduction to prove that the sequence $\{x^k\}\subset\mathfrak{B}(\overline{x};\sigma,\bar{\nu})$. It is clear that $x^{k_0}\in\mathfrak{B}(\overline{x};\sigma,\bar{\nu})$ by (\ref{eq:condition1}) and (\ref{eq:condition2}). The inequalities $\overline{F}<F(x^{k_0+1})\leq F(x^{k_0})<\overline{F}+\bar{\nu}$ hold trivially. On the other hand, by~\eqref{eq:22}, we have $$\|x^{k_0+1}-x^{k_0}\|\leq\sqrt{\frac{F(x^{k_0})-F(x^{k_0+1})}{a}}\leq\sqrt{\frac{F(x^{k_0})-\overline{F}}{a}}$$ and
$$\|x^{k_0+1}-\overline{x}\|\leq\|x^{k_0}-\overline{x}\|+\|x^{k_0}-x^{k_0+1}\|\leq\|x^{k_0}-\overline{x}\|+\sqrt{\frac{F(x^{k_0})-\overline{F}}{a}}<
\sigma~~(by~\eqref{eq:condition2}).$$
Thus $x^{k_0+1}\in\mathfrak{B}(\overline{x};\sigma,\bar{\nu})$. Now suppose that $x^i\in\mathfrak{B}(\overline{x};\sigma,\bar{\nu})$ for $i=k_0+1,..,k_0+k$ and $x^{k_0+k}\neq x^{k_0+k+1}$. $F(x^{k_0+1})>F(x^{k_0+2})>\cdots>F(x^{k_0+k})>F(x^{k_0+k+1})>\overline{F}$. We need to show that $x^{k_0+k+1}\in\mathfrak{B}(\overline{x};\sigma,\bar{\nu})$. By the concavity of function $h(y)=y^{\frac{1}{2}}$, we have, for $i=k_0+1,k_0+2,\dots,k_0+k$, that
$$
\left(F(x^i)-\overline{F}\right)^{\frac{1}{2}}-\left(F(x^{i+1})-\overline{F}\right)^{\frac{1}{2}}
\geq \frac{1}{2}\frac{[F(x^i)-F(x^{i+1})]}{\left(F(x^i)-\overline{F}\right)^{\frac{1}{2}}}.$$
Recalling that $x^{i+1}\in T_{D^i,\epsilon^i}(x^i)$ and
applying \eqref{eq:22}~ and~\eqref{eq:s-3} to $[F(x^i)-F(x^{i+1})]$ and $(F(x^i)-\overline{F})^{1/2}$ respectively yield
$$\frac{2\sqrt{\kappa'}}{a}||x^i-x^{i-1}||[\left(F(x^i)-\overline{F}\right)^{\frac{1}{2}}-\left(F(x^{i+1})-\overline{F}\right)^{\frac{1}{2}}]\geq ||x^i-x^{i+1}||^2.$$
It follows from $2\sqrt{d_1 d_2}\leq d_1+d_2$ with nonnegative $d_1$ and $d_2$ that
\begin{eqnarray}\label{eq:34}
2\|x^{i+1}-x^i\|\leq\|x^{i}-x^{i-1}\|+\frac{2\sqrt{\kappa'}}{a}\left[\left(F(x^i)-\overline{F}\right)^{\frac{1}{2}}-\left(F(x^{i+1})-\overline{F}\right)^{\frac{1}{2}}\right].
\end{eqnarray}
Summing~\eqref{eq:34} for $i=k_0+1,...,k_0+k$, we obtain
\begin{eqnarray}\label{eq:51}
\sum_{i=k_0+1}^{k_0+k}\|x^{i+1}-x^i\|+\|x^{k_0+k+1}-x^{k_0+k}\|\leq\|x^1-x^0\|+\frac{2\sqrt{\kappa'}}{a}\left[\left(F(x^1)-\overline{F}\right)^{\frac{1}{2}}-\left(F(x^{k_0+k+1})-\overline{F}\right)^{\frac{1}{2}}\right].
\end{eqnarray}
Using  \eqref{eq:51} along with the triangle inequality, we have
\begin{eqnarray*}
\|\overline{x}-x^{k_0+k+1}\|&\leq&\|\overline{x}-x^{k_0}\|+\|x^{k_0}-x^{k_0+1}\|+\sum_{i=k_0+1}^{k_0+k}\|x^{i+1}-x^i\|\\
                   &\leq&\|\overline{x}-x^{k_0}\|+2\|x^{k_0}-x^{k_0+1}\|+\frac{2\sqrt{\kappa'}}{a}\left[\left(F(x^{k_0+1})-\overline{F}\right)^{\frac{1}{2}}\right]\\
                   &\leq&\|\overline{x}-x^{k_0}\|+2\sqrt{\frac{F(x^{k_0})-\overline{F}}{a}}+\frac{2\sqrt{\kappa'}}{a}\left[\left(F(x^{k_0})-\overline{F}\right)^{\frac{1}{2}}\right]\\
&<&\sigma~~~(\mbox{by}~\eqref{eq:condition2}).
\end{eqnarray*}
 This shows that $x^{k_0+k+1}\in\mathfrak{B}(\overline{x};\sigma,\bar{\nu})$, and
(i) is proved by the Principle of Mathematical Induction.\\
\rm{(ii)} and \rm{(iii)}:
A direct consequence of \eqref{eq:51} is, for all $k$,
$$\sum_{i=k_0+1}^{k_0+k}\|x^{i+1}-x^i\|\leq\|x^1-x^0\|+\frac{2\sqrt{\kappa'}}{a}\left[\left(F(x^1)-\overline{F}\right)^{\frac{1}{2}}\right]<+\infty.$$
Therefore
$$\sum_{i=0}^{+\infty}\|x^{i+1}-x^i\|<+\infty.$$
In particular, this implies that the sequence $\{x^k\}$ actually  converges to the point $\overline{x}$.  And $\overline{x}$ is a desired critical point of $F$ by Proposition~\ref{proposition4}.
\end{proof}
\begin{remark} Proposition~\ref{prop:3.1} is still valid under the level-set Bregman error bound condition holding uniformly at point $\overline{x}$ with exponent $p=1$ for all $\{T_{D^k,\epsilon^k}(\cdot)\}$.
\end{remark}
The main result of this section follows.
\begin{theorem}[Sufficient conditions for local linear convergence]\label{theo1} Let a sequence $\{x^k\}$ be generated by the VBPG method, and the sequence $\{x^k\}$ converges to $\overline{x}\in\overline{\mathbf{X}}_L$. Assume that the level-set subdifferential error bound holds at the point $\overline{x}$ with $\gamma\in(0,1]$, $\eta>0$ and $\nu>0$. Suppose that the assumptions of Lemma~\ref{lemma1} hold. Let $\overline{x}\in\overline{\mathbf{X}}_L$ and $\overline{F}=F(\overline{x})$. Let $a$ and $\kappa'$ be constants given in Lemma~\ref{lemma1} and Lemma~\ref{lemma3.1} respectively. Let $\sigma\in(0,\frac{\eta}{2})$ and  $\bar{\nu}\in(0,\min\{\frac{\nu}{N},a(\frac{\eta}{2}-\sigma)^2\})$, $N>\frac{2\overline{\epsilon}\nu}{m-\overline{\epsilon}L}/\left(\frac{\eta}{2}\right)^2$.  Suppose that there is $k_0$ such that $x^{k_0}$ satisfies the conditions~\eqref{eq:condition1}-\eqref{eq:condition2}. Then $\{F(x^k)\}$ converges to value $\overline{F}=F(\overline{x})$ at the $Q$-linear rate of convergence; that is, there are some $\beta\in(0,1)$ and $k_0$ such that
\begin{equation}\label{linearrate}
F(x^{k+1})-\overline{F}\leq\beta(F(x^k)-\overline{F}),\quad\forall k\geq k_0.
\end{equation}
 As a consequence,
\begin{equation}\label{linearrateB}
\sum_{i=1}^{\infty}\left(F(x^{i})-\overline{F}\right)<+\infty.
\end{equation}
Moreover, the sequence $\{x^k\}$  converges at the $R$-linear rate  to a critical point $\hat{x}$; that is, either a limiting critical point or proximal critical point of $F$ (if $D^k=D$, $\epsilon^k\rightarrow\hat{\epsilon}\in\left(\underline{\epsilon},\frac{m}{L}\right)$).
\end{theorem}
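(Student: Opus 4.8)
The plan is to run the standard ``descent $+$ error bound $\Rightarrow$ linear rate'' argument, but using the \emph{level-set} Bregman proximal error bound (available through Theorem~\ref{theo:4.2}) in place of the usual error bound to a solution set. First I would set $r_k:=F(x^k)-\overline{F}$ and record the two preliminary facts already in hand: by Proposition~\ref{proposition5}, $\lim_k F(x^k)=F(\overline{x})=\overline{F}$, so that $r_k\geq 0$ with $r_k\downarrow 0$; and by Proposition~\ref{prop:3.1}, the hypothesis that $x^{k_0}$ obeys \eqref{eq:condition1}--\eqref{eq:condition2} forces $x^k\in\mathfrak{B}(\overline{x};\sigma,\bar{\nu})\subset\mathfrak{B}(\overline{x};\frac{\eta}{2},\frac{\nu}{N})$ for all $k\geq k_0$, as well as finite length and $x^k\to\overline{x}$. (If $r_k=0$ for some finite $k$, then by the descent inequality~\eqref{eq:22} the sequence is constant from that index on and all the claims are trivial, so I may assume $r_k>0$ for every $k$.)

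The core step is the one-step contraction of $\{r_k\}$. Fix $k\geq k_0$. Since $x^{k+1}\in T_{D^k,\epsilon^k}(x^k)$ and $x^k\in\mathfrak{B}(\overline{x};\frac{\eta}{2},\frac{\nu}{N})$, Lemma~\ref{lemma3.1} gives $r_{k+1}\leq\kappa'\|x^{k+1}-x^k\|^2$ with $\kappa'=c_0\theta^2$ independent of $k$, while the descent inequality of Lemma~\ref{lemma1} gives $\|x^k-x^{k+1}\|^2\leq\frac{1}{a}\bigl(F(x^k)-F(x^{k+1})\bigr)=\frac{1}{a}(r_k-r_{k+1})$. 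Combining,
\begin{equation*}
r_{k+1}\leq\frac{\kappa'}{a}\,(r_k-r_{k+1})\qquad\Longrightarrow\qquad r_{k+1}\leq\beta\,r_k,\quad\beta:=\frac{\kappa'}{a+\kappa'}\in(0,1),
\end{equation*}
which is precisely \eqref{linearrate}. Iterating, $r_k\leq\beta^{\,k-k_0}r_{k_0}$, and summing the geometric series yields \eqref{linearrateB}.

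It remains to pass from $Q$-linear convergence of function values to $R$-linear convergence of the iterates. Feeding $r_k\leq\beta^{\,k-k_0}r_{k_0}$ back into Lemma~\ref{lemma1} gives $\|x^k-x^{k+1}\|\leq\sqrt{r_k/a}\leq\sqrt{r_{k_0}/a}\,(\sqrt{\beta})^{\,k-k_0}$, and since $\sqrt{\beta}\in(0,1)$ the telescoped triangle inequality (valid because $x^k\to\overline{x}$) gives, for $k\geq k_0$,
\begin{equation*}
\|x^k-\overline{x}\|\leq\sum_{j\geq k}\|x^j-x^{j+1}\|\leq\frac{\sqrt{r_{k_0}/a}}{1-\sqrt{\beta}}\,(\sqrt{\beta})^{\,k-k_0}.
\end{equation*}
Thus $\{x^k\}$ converges $R$-linearly to $\hat{x}=\overline{x}$, which is a limiting critical point by Proposition~\ref{proposition4}(iii), and is moreover a proximal critical point when $D^k=D$ and $\epsilon^k\to\hat{\epsilon}\in(\underline{\epsilon},\frac{m}{L})$ by Proposition~\ref{proposition4}(v).

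The delicate point is not any single inequality but ensuring the constants $a$ and $\kappa'$ are genuinely uniform in $k$ and that every iterate past $k_0$ stays inside the one neighborhood on which both Lemma~\ref{lemma3.1} and the bound $\kappa'=c_0\theta^2$ are simultaneously in force. Uniformity of $a$ follows from the $k$-independent bounds in Assumption~\ref{assump2}, and uniformity of $\kappa'$ from the fact that $\theta$ in Theorem~\ref{theo:4.2} and $c_0$ in Proposition~\ref{proposition1.5} depend only on $L$, $M$, $\underline{\epsilon}$, the error-bound data, and the neighborhood radii --- never on $k$; the trapping of the iterates is exactly what the careful choice of $\sigma$, $\bar{\nu}$ and $N$ in Proposition~\ref{prop:3.1} was designed to deliver, so the present argument mostly cashes that in.
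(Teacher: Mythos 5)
Your proposal is correct and follows essentially the same route as the paper: combine the descent estimate of Lemma~\ref{lemma1} with the uniform bound $F(x^{k+1})-\overline{F}\leq\kappa'\|x^{k+1}-x^k\|^2$ from Lemma~\ref{lemma3.1} (valid because Proposition~\ref{prop:3.1} traps the iterates in $\mathfrak{B}(\overline{x};\sigma,\bar{\nu})$), yielding the same contraction factor $\beta=\kappa'/(a+\kappa')=1/(1+a/\kappa')$, and then pass to $R$-linear convergence of $\{x^k\}$ exactly as the paper does via $\|x^k-x^{k+1}\|\leq\sqrt{r_k/a}$ and the telescoped triangle inequality. Your explicit treatment of the degenerate case $r_k=0$ is a minor (harmless) addition not present in the paper's argument.
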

\begin{proof}
 For a sequence $\{x^k\}$ generated by the VBPG method, by Proposition~\ref{prop:3.1}, we have that  the sequence $\{F(x^k)\}$ is strictly decreasing, and
converges to $F(\overline{x})=\overline{F}$. Moreover, the sequence $\{x^k\}$ converges to $\overline{x}$, a critical point. In addition, there is $k_0$ such that for $k\geq k_0$ such that $\{x^k\}\subset\mathfrak{B}(\overline{x};\sigma,\bar{\nu})$. For $k\geq k_0$ such that $0<F(x^{k+1})-\overline{F}<1$ and $F(x^k)>\overline{F}$ as $\{F(x^k)\}$ is strictly decreasing, and converges to $F(\overline{x})=\overline{F}$. It follows that
\begin{eqnarray}
F(x^{k+1})-\overline{F}&=&\left(F(x^k)-\overline{F}\right)+\left(F(x^{k+1})-F(x^k)\right)\nonumber\\
                    &\leq&\left(F(x^k)-\overline{F}\right)-a\|x^{k+1}-x^k\|^2\quad\mbox{(by (\ref{eq:22}) in Lemma~\ref{lemma1})}\nonumber\\
                    &\leq&\left(F(x^k)-\overline{F}\right)-a\left(\frac{1}{\kappa'}\right)\left(F(x^{k+1})-\overline{F}\right)\quad\mbox{(by~\eqref{eq:s-3})}
\end{eqnarray}
Therefore
\begin{eqnarray}
F(x^{k+1})-\overline{F}=\frac{1}{1+a\left(\frac{1}{\kappa'}\right)}\left(F(x^k)-\overline{F}\right)\hspace{3mm}\forall k\geq k_0.
\end{eqnarray}
The above estimation shows that $\{F(x^k)\}$ convergences  to $\overline{F}$ at the  Q-linear rate; that is,
\begin{equation}\label{Q-linear}
F(x^{k+1})-\overline{F}\leq \beta\left(F(x^k)-\overline{F}\right)\hspace{3mm}\forall k\geq k_0,
\end{equation}
where $\beta=\frac{1}{1+a\left(\frac{1}{\kappa'}\right)}\in (0,1)$. (\ref{Q-linear}) implies, in particular,
that $ F(x^{k})-\overline{F}\leq\beta^{(k-k_0)}(F(x^{k_0})-\overline{F})$ for all $k\geq k_0$. So
$\sum_{i=k_0}^{\infty}\left(F(x^{k})-\overline{F}\right)<+\infty$ and (\ref{linearrateB}) follows. We now derive  the R-linear rate of convergence of  $\{x^k\}$. By (\ref{eq:22}) in Lemma~\ref{lemma1} again,  we have
\begin{eqnarray}
F(x^k)-F(x^{k+1})\geq a\|x^k-x^{k+1}\|^2.
\end{eqnarray}
Thus
\begin{eqnarray*}
\|x^k-x^{k+1}\|^2&\leq&\frac{1}{a}\bigg{[}\big{(}F(x^k)-\overline{F}\big{)}-\big{(}F(x^{k+1})-\overline{F}\big{)}\bigg{]}\\
                 &\leq&\frac{1}{a}\big{(}F(x^k)-\overline{F}\big{)}\\
                 &\leq&\frac{\beta^{(k-k_0)}}{a}(F(x^{k_0})-\overline{F})~~\mbox{ (by (\ref{Q-linear}))}.
\end{eqnarray*}
From the above inequality, we see that
$$\|x^k-x^{k+1}\|\leq\hat{M}(\sqrt{\beta})^{(k-k_0)}~~~ \forall k> k_0,$$
where $\hat{M}=\sqrt{\frac{F(x^{k_0})-\overline{F}}{a}}$.
By Proposition~\ref{prop:3.1}, we have $\{x^k\}$ converges to desired critical point $\overline{x}$. Hence,
$$\|x^k-\overline{x}\|\leq\sum_{i=k_0}^{\infty}\|x^i-x^{i+1}\|\leq\frac{\hat{M}}{1-\sqrt{\beta}}(\sqrt{\beta})^{(k-k_0)}.$$ This
shows that $\{x^k\}$ converges to desired critical point $\overline{x}$ at the R-linear rate; that is, $$\limsup_{k\rightarrow\infty}\sqrt[(k-k_0)]{\|x^k-\overline{x}\|}=\sqrt{\beta}<1.$$
\end{proof}
\section{Enhanced properties under semi-covexity of $g$ and linear convergence under strong level-set errror bounds}\label{sec:necessary and sufficient conditions}
In this section,  we will again omit $k$ and use the short notation Bregman distance $D$ and positive $\epsilon$ as the conclusions on $D$ hold uniformly regardless of the choice of $k$.
\subsection{The properties of Bregman type mapping and function under semiconvexity of $g$}
\begin{proposition}\label{prop:singlevalue} {\bf (Single-valueness of Bregman proximal mappings)}
Suppose that Assumptions~\ref{assump1} and~\ref{assump2} hold, and that  $g$ is semiconvex on $\RR^n$ with constant $\rho$ and $\overline{\epsilon}<\min\{\frac{m}{L},\frac{m}{\rho}\}$. Then for all $x\in\RR^n$, $T_{D,\epsilon}(x)$ is single-valued.
\end{proposition}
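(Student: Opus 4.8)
The plan is to show that, for each fixed $x\in\RR^n$, the function minimized in the definition~\eqref{defi:Tk} of $T_{D,\epsilon}(x)$, namely
$$\phi_x(y):=\langle\nabla f(x),y-x\rangle+g(y)+\frac{1}{\epsilon}D(x,y),$$
is strongly convex in $y$, and then to combine strong (hence strict) convexity with the already-established nonemptiness of $T_{D,\epsilon}(x)$ to conclude that the $\arg\min$ is a singleton.

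First I would decompose $\phi_x$. Since $\nabla f(x)$ is a fixed vector, $\langle\nabla f(x),y-x\rangle$ is affine in $y$. By the definition of the Bregman distance, $\frac{1}{\epsilon}D(x,y)=\frac{1}{\epsilon}K(y)-\frac{1}{\epsilon}\langle\nabla K(x),y-x\rangle-\frac{1}{\epsilon}K(x)$, the last two terms being affine in $y$ plus a constant. From the assumed property $\langle\nabla_yD(x,y),y-x\rangle\geq m\|x-y\|^2$ together with $\nabla_yD(x,y)=\nabla K(y)-\nabla K(x)$, the map $\nabla K$ is strongly monotone with modulus $m$, i.e. $K-\tfrac{m}{2}\|\cdot\|^2$ is convex; hence $\tfrac1\epsilon K(y)=\tfrac1\epsilon\bigl(K(y)-\tfrac m2\|y\|^2\bigr)+\tfrac{m}{2\epsilon}\|y\|^2$ with the first summand convex. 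Finally, semiconvexity of $g$ with modulus $\rho$ supplies a convex (proper lsc) function $h$ with $g(y)=h(y)-\tfrac{\rho}{2}\|y\|^2$. Collecting terms,
$$\phi_x(y)=\bigl[\text{affine in }y\bigr]+h(y)+\frac1\epsilon\Bigl(K(y)-\frac m2\|y\|^2\Bigr)+\frac12\Bigl(\frac m\epsilon-\rho\Bigr)\|y\|^2.$$

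The decisive step is the sign of $\tfrac m\epsilon-\rho$: since $\epsilon\leq\overline\epsilon<\tfrac m\rho$ we get $\tfrac m\epsilon-\rho\geq\tfrac m{\overline\epsilon}-\rho>0$, so the last term is a strongly convex quadratic. Thus $\phi_x$ is a sum of convex functions and a strongly convex quadratic, hence strongly convex, and in particular strictly convex and proper lsc. Meanwhile $\overline\epsilon<\tfrac mL$ places $\epsilon$ in $(0,m/L)$, so Proposition~\ref{prop:Ek}(i) guarantees $T_{D,\epsilon}(x)=\arg\min_y\phi_x(y)$ is nonempty. A proper lsc strictly convex function admits at most one minimizer, so $T_{D,\epsilon}(x)$ is single-valued.

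I do not anticipate a genuine obstacle here; the only care needed is the bookkeeping ensuring that the $-\tfrac\rho2\|y\|^2$ coming from the semiconvex decomposition of $g$ is strictly dominated, uniformly over the admissible range of $\epsilon$, by the $+\tfrac{m}{2\epsilon}\|y\|^2$ extracted from the strong convexity of $K$ after scaling by $1/\epsilon$ — which is exactly what $\overline\epsilon<m/\rho$ delivers. (One could avoid citing Proposition~\ref{prop:Ek} altogether, since strong convexity of $\phi_x$ already forces coercivity and hence attainment of the minimum; but reusing the nonemptiness already proved is the cleanest route.)
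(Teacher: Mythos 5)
Your argument is correct, but it is a genuinely different route from the paper's. You prove that the subproblem objective $\phi_x(y)=\langle\nabla f(x),y-x\rangle+g(y)+\frac{1}{\epsilon}D(x,y)$ is strongly convex, by splitting $g=h-\frac{\rho}{2}\|\cdot\|^2$ (the semiconvex decomposition) and $\frac{1}{\epsilon}K=\frac{1}{\epsilon}\bigl(K-\frac{m}{2}\|\cdot\|^2\bigr)+\frac{m}{2\epsilon}\|\cdot\|^2$, so that the net quadratic coefficient $\frac{1}{2}\bigl(\frac{m}{\epsilon}-\rho\bigr)$ is positive precisely because $\overline{\epsilon}<\frac{m}{\rho}$; strict convexity then gives at most one minimizer, and nonemptiness comes from Proposition~\ref{prop:Ek}(i) (or, as you note, from coercivity). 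The paper instead argues at the level of optimality conditions: it takes two candidate minimizers $t^1,t^2$, writes $-\bigl(\nabla f(x)+\frac{1}{\epsilon}\nabla_yD(x,t^i)\bigr)\in\partial_Pg(t^i)$, adds the two semiconvex subgradient inequalities to obtain the hypomonotonicity estimate $\langle v^2-v^1,t^2-t^1\rangle\geq-\rho\|t^1-t^2\|^2$, and plays this against the strong monotonicity of $\nabla_yD(x,\cdot)$ to get $\bigl(\frac{m}{\overline{\epsilon}}-\rho\bigr)\|t^1-t^2\|^2\leq0$. Both proofs hinge on the same quantitative fact $\frac{m}{\epsilon}>\rho$, and yours is arguably the more transparent and self-contained (it yields existence and uniqueness in one stroke). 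What the paper's two-point argument buys is locality: it uses only the subgradient inequality with curvature $-\rho$, not a global convex decomposition of $g$, so it extends verbatim to $g$ merely uniformly prox-regular on a ball — exactly the extension invoked later in Remark~\ref{remark2.2} and Theorem~\ref{theo:4.3} — whereas your strong-convexity decomposition is tied to semiconvexity of $g$ on all of $\RR^n$.
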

\begin{proof} For each $x$, the nonemptiness of $T_{D,\epsilon}(x)$ follows from (i) of Proposition \ref{prop:Ek}.
Since $g$ is semiconvex, then for $t_{D,\epsilon}^i(x)\in T_{D,\epsilon}(x)$, $i=1,2$, i.e.,
\begin{eqnarray*}
-\left(\nabla f(x)+\frac{1}{\epsilon}\nabla_yD(x,t_{D,\epsilon}^1(x))\right)&=&v^1\in\partial_Pg\left(t_{D,\epsilon}^1(x)\right),\\
-\left(\nabla f(x)+\frac{1}{\epsilon}\nabla_yD(x,t_{D,\epsilon}^2(x))\right)&=&v^2\in\partial_Pg\left(t_{D,\epsilon}^2(x)\right),
\end{eqnarray*}
we have
\begin{eqnarray*}
g(t_{D,\epsilon}^1(x))&\geq&g(t_{D,\epsilon}^2(x))+\langle v^2,t_{D,\epsilon}^1(x)-t_{D,\epsilon}^2(x)\rangle-\frac{\rho}{2}\|t_{D,\epsilon}^1(x)-t_{D,\epsilon}^2(x)\|^2,\\
g(t_{D,\epsilon}^2(x))&\geq&g(t_{D,\epsilon}^1(x))+\langle v^1,t_{D,\epsilon}^2(x)-t_{D,\epsilon}^1(x)\rangle-\frac{\rho}{2}\|t_{D,\epsilon}^1(x)-t_{D,\epsilon}^2(x)\|^2.
\end{eqnarray*}
It follows that
$$\langle v^2-v^1,t_{D,\epsilon}^2(x)-t_{D,\epsilon}^1(x)\rangle\geq-\rho\|t_{D,\epsilon}^1(x)-t_{D,\epsilon}^2(x)\|^2.$$ So
\begin{eqnarray*}
\frac{1}{\epsilon}\langle\nabla_yD(x,t_{D,\epsilon}^1(x))-\nabla_yD(x,t_{D,\epsilon}^2(x)),t_{D,\epsilon}^1(x)-t_{D,\epsilon}^2(x)\rangle-\rho\|t_{D,\epsilon}^1(x)-t_{D,\epsilon}^2(x)\|^2\leq0.
\end{eqnarray*}
By Assumption~\ref{assump2}, $\left(\frac{m}{\overline{\epsilon}}-\rho\right)\|t_{D,\epsilon}^1(x)-t_{D,\epsilon}^2(x)\|^2\leq0$.\\
But $\frac{m}{\overline{\epsilon}}-\rho>0,$ which deduce that $t_{D,\epsilon}^1(x)=t_{D,\epsilon}^2(x)$.
\end{proof}
\begin{proposition}\label{prop:Gk}{\bf (Further properties of Bregman type mappings and functions)}
Suppose that the assumptions of Proposition~\ref{prop:singlevalue} hold. Then for $x\in\RR^n$ and $\overline{\epsilon}<\min\{\frac{m}{L},\frac{m}{\rho}\}$ the following statements hold:
\begin{itemize}
\item[{\rm(i)}] $E_{D,\epsilon}(x)\leq F(x)-\frac{1}{2}\big{(}\frac{m}{\overline{\epsilon}}-\rho\big{)}\|x-T_{D,\epsilon}(x)\|^2$.
\item[{\rm(ii)}] $\frac{1}{2\overline{\epsilon}^2}(m-\overline{\epsilon}\rho)\|x-T_{D,\epsilon}(x)\|^2\leq G_{D,\epsilon}(x)$.
\item[{\rm(iii)}] $G_{D,\epsilon}(x)\leq\frac{1}{2(m-\overline{\epsilon}\rho)}dist^2\left(0,\partial_P F(x)\right)$.
\item[{\rm(iv)}] $\|x-T_{D,\epsilon}(x)\|\leq\left(\frac{\overline{\epsilon}}{m-\overline{\epsilon}\rho}\right)dist\left(0,\partial_PF(x)\right)$.
\item[{\rm(v)}] $G_{D,\epsilon}(x)=0$ if only if $x=T_{D,\epsilon}(x)$ or $0\in\partial_PF(x)$.
\end{itemize}
\end{proposition}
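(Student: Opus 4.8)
Under the stated hypotheses Proposition~\ref{prop:singlevalue} makes $T_{D,\epsilon}(x)$ single-valued, so I will write $t:=T_{D,\epsilon}(x)$ throughout, and Proposition~\ref{prop:Ek} guarantees it is well defined since $\epsilon\le\overline{\epsilon}<m/L$. Everything will be deduced from (i) and (iii); (ii), (iv), (v) are corollaries. Two observations get used repeatedly: (a) since $g$ is semiconvex with modulus $\rho$, for every $w\in\partial_P g(x)=\partial_L g(x)$ the \emph{global} inequality $g(y)\geq g(x)+\langle w,y-x\rangle-\tfrac{\rho}{2}\|x-y\|^2$ holds for all $y\in\RR^n$ (that is \eqref{eq:varphi}), and symmetrically; (b) because $\overline{\epsilon}<m/\rho$ forces $\rho\epsilon<m$ on $(0,\overline{\epsilon}]$, the function $\epsilon\mapsto(m-\epsilon\rho)/\epsilon^2$ is decreasing and $\epsilon\mapsto 1/(m-\epsilon\rho)$ increasing there, so an estimate proved with the running $\epsilon$ can always be relaxed to the stated one involving $\overline{\epsilon}$.

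For (i) I would expand $E_{D,\epsilon}(x)=f(x)+\langle\nabla f(x),t-x\rangle+g(t)+\tfrac1\epsilon D(x,t)$, use the optimality relation $v:=-\nabla f(x)-\tfrac1\epsilon\nabla_yD(x,t)\in\partial_P g(t)$ from Proposition~\ref{prop:Fk}(i), and apply semiconvexity of $g$ from $t$ to $x$: $g(t)\leq g(x)+\langle v,t-x\rangle+\tfrac{\rho}{2}\|x-t\|^2$. Substituting $v$ cancels the $\nabla f(x)$ terms and leaves $E_{D,\epsilon}(x)\leq F(x)-\tfrac1\epsilon\langle\nabla_yD(x,t),t-x\rangle+\tfrac1\epsilon D(x,t)+\tfrac{\rho}{2}\|x-t\|^2$; the Bregman three-point identity $\langle\nabla_yD(x,t),t-x\rangle=D(x,t)+D(t,x)$ (immediate from the definition of $D$) turns the two middle terms into $-\tfrac1\epsilon D(t,x)$, and $D(t,x)\geq\tfrac{m}{2}\|x-t\|^2$ (Assumption~\ref{assump2}) then gives $E_{D,\epsilon}(x)\leq F(x)-\tfrac12(\tfrac m\epsilon-\rho)\|x-t\|^2$, which is $\leq F(x)-\tfrac12(\tfrac m{\overline{\epsilon}}-\rho)\|x-t\|^2$. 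Statement (ii) now drops out of Proposition~\ref{prop:Ek}(ii): $G_{D,\epsilon}(x)=\tfrac1\epsilon\big(F(x)-E_{D,\epsilon}(x)\big)\geq\tfrac{m-\epsilon\rho}{2\epsilon^2}\|x-t\|^2\geq\tfrac{m-\overline{\epsilon}\rho}{2\overline{\epsilon}^2}\|x-t\|^2$.

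For (iii) --- the crux --- I would evaluate the minimand defining $G_{D,\epsilon}$ at its minimizer $t$, obtaining $G_{D,\epsilon}(x)=-\tfrac1\epsilon\big(\langle\nabla f(x),t-x\rangle+g(t)-g(x)+\tfrac1\epsilon D(x,t)\big)$, then bound $g(t)$ from below by semiconvexity at an arbitrary $w\in\partial_P g(x)$, namely $g(t)\geq g(x)+\langle w,t-x\rangle-\tfrac{\rho}{2}\|x-t\|^2$, and use $D(x,t)\geq\tfrac{m}{2}\|x-t\|^2$, to get $G_{D,\epsilon}(x)\leq\tfrac1\epsilon\langle\nabla f(x)+w,\,x-t\rangle-\tfrac{m-\epsilon\rho}{2\epsilon^2}\|x-t\|^2$. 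A Young inequality with weight $(m-\epsilon\rho)/\epsilon^2$ bounds the cross term by $\tfrac{m-\epsilon\rho}{2\epsilon^2}\|x-t\|^2+\tfrac1{2(m-\epsilon\rho)}\|\nabla f(x)+w\|^2$, which cancels the quadratic term and leaves $G_{D,\epsilon}(x)\leq\tfrac1{2(m-\epsilon\rho)}\|\nabla f(x)+w\|^2$. Taking the infimum over $w\in\partial_P g(x)$ and using $\partial_P F(x)=\nabla f(x)+\partial_P g(x)$ (Proposition~\ref{prop1.1}(ii)) gives $G_{D,\epsilon}(x)\leq\tfrac1{2(m-\overline{\epsilon}\rho)}\,dist^2\big(0,\partial_P F(x)\big)$ (the right side being $+\infty$, hence the bound trivial, if $\partial_P g(x)=\emptyset$).

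Finally, (iv) follows by chaining (ii) and (iii), $\tfrac{m-\overline{\epsilon}\rho}{2\overline{\epsilon}^2}\|x-t\|^2\leq G_{D,\epsilon}(x)\leq\tfrac1{2(m-\overline{\epsilon}\rho)}dist^2(0,\partial_P F(x))$, and taking square roots; and (v) from the pieces already assembled: $G_{D,\epsilon}(x)=0$ forces $x=t=T_{D,\epsilon}(x)$ by (ii), whence $0\in\partial_P F(x)$ by Proposition~\ref{prop:Fk}(iii); conversely if $x=T_{D,\epsilon}(x)$ the minimand in $G_{D,\epsilon}$ vanishes at $y=x$ (as $D(x,x)=0$), so $G_{D,\epsilon}(x)=0$, while $0\in\partial_P F(x)$ makes the bound in (iii) equal $0$ and forces $G_{D,\epsilon}(x)=0$ since $G_{D,\epsilon}\geq0$. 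The step I expect to cause the most friction is (iii): one must test semiconvexity at a subgradient $w\in\partial_P g(x)$ (not at the optimality multiplier $v\in\partial_P g(t)$ from (i)), calibrate the Young-inequality weight so the quadratic terms cancel exactly, and invoke the proximal sum rule to re-express the bound through $\partial_P F(x)$; the rest is bookkeeping together with the monotonicity observation (b).
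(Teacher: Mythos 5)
Your proposal is correct and follows essentially the same route as the paper: for (i) you test semiconvexity of $g$ at $t=T_{D,\epsilon}(x)$ with the optimality subgradient and absorb the Bregman terms via strong convexity of $D$ (your three-point identity is just an equivalent rearrangement), and for (iii) you test semiconvexity at $x$ with $w\in\partial_P g(x)$ and complete the square (Young in place of the paper's Cauchy--Schwarz-plus-maximization), with (ii), (iv), (v) obtained exactly as in the paper. Your bookkeeping of the running $\epsilon$ versus $\overline{\epsilon}$ is, if anything, slightly more careful than the paper's.
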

\begin{proof}
From the assumptions and Proposition~\ref{prop:singlevalue}, for $\overline{\epsilon}<\min\{\frac{m}{L},\frac{m}{\rho}\}$, $x\in\RR^n$, we have that $T_{D,\epsilon}(x)$ is single valued.\\
{\rm (i):} From the optimality condition for the minimization problem in~\eqref{APk}, we have
\begin{equation}\label{eq:13}
0\in\nabla f(x)+\partial_L g\big{(}T_{D,\epsilon}(x)\big{)}+\frac{1}{\epsilon}\nabla_{y}D\big{(}x,T_{D,\epsilon}(x)\big{)}
\end{equation}
or
\begin{equation}
-\left(\nabla f(x)+\frac{1}{\epsilon}\nabla_{y}D\big{(}x,T_{D,\epsilon}(x)\big{)}\right)\in\partial_Lg\big{(}T_{D,\epsilon}(x)\big{)},
\end{equation}
Since $g$ is continuous on $\mathbf{dom}g$ and semiconvex with $\rho$,
\begin{eqnarray}\label{eq:10}
g(x)&\geq&g\big{(}T_{D,\epsilon}(x)\big{)}-\frac{\rho}{2}\|x-T_{D,\epsilon}(x)\|^2-\langle\nabla f(x)+\frac{1}{\epsilon}\nabla_{y}D\big{(}x,T_{D,\epsilon}(x)\big{)},x-T_{D,\epsilon}(x)\rangle\;\mbox{(by~\eqref{eq:varphi})}\nonumber\\
&\geq&g\big{(}T_{D,\epsilon}(x)\big{)}-\frac{\rho}{2}\|x-T_{D,\epsilon}(x)\|^2-\langle\nabla f(x),x-T_{D,\epsilon}(x)\rangle\nonumber\\
&&+\frac{1}{\epsilon}D(x,T_{D,\epsilon}(x))-\frac{1}{\epsilon}D(x,x)+\frac{m}{2\overline{\epsilon}}\|x-T_{D,\epsilon}(x)\|^2\nonumber\\
    &\geq&g\big{(}T_{D,\epsilon}(x)\big{)}-\langle\nabla f(x),x-T_{D,\epsilon}(x)\rangle+\frac{1}{\epsilon}D(x,T_{D,\epsilon}(x))+\frac{1}{2}\left(\frac{m}{\overline{\epsilon}}-\rho\right)\|x-T_{D,\epsilon}(x)\|^2.\nonumber\\
    &&\qquad\qquad\qquad\qquad\qquad\qquad\qquad\qquad\qquad\qquad\qquad\qquad\mbox{(by Assumption~\ref{assump2})}
\end{eqnarray}
Adding $f(x)$ to both sides and consider the definition of $E_{D,\epsilon}(x)$ proves the claim.\\
{\rm(ii):} Since $G_{D,\epsilon}(x)=\frac{1}{\epsilon}\big{(}F(x)-E_{D,\epsilon}(x)\big{)}$, from statement (i) of Proposition~\ref{prop:Ek} and (i) of this proposition, we have
\begin{eqnarray}
G_{D,\epsilon}(x)&=&\frac{1}{\epsilon}\big{(}F(x)-E_{D,\epsilon}(x)\big{)}\nonumber\\
      &\geq&\frac{1}{2\overline{\epsilon}^2}\big{(}m-\overline{\epsilon}\rho\big{)}\|x-T_{D,\epsilon}(x)\|^2.
\end{eqnarray}
{\rm(iii):} For $\epsilon<\min\{\frac{m}{L},\frac{m}{\rho}\}$, we have
\begin{eqnarray}
\epsilon G(x)=-\langle\nabla f(x), T_{D,\epsilon}(x)-x\rangle+g\left(T_{D,\epsilon}(x)\right)-g(x)-\frac{1}{\epsilon}D(x,T_{D,\epsilon}(x)).
\end{eqnarray}
Let $\nu\in\partial_Pg(x)$, thanks the semiconvex of $g$, we get
\begin{eqnarray}
\epsilon G(x)&\leq&-\langle\nabla f(x), T_{D,\epsilon}(x)-x\rangle-\langle\nu, T_{D,\epsilon}(x)-x\rangle+\frac{\rho}{2}\|x-T_{D,\epsilon}(x)\|^2-\frac{m}{2\overline{\epsilon}}\|x-T_{D,\epsilon}(x)\|^2\nonumber\\
&=&-\langle\nabla f(x), T_{D,\epsilon}(x)-x\rangle-\frac{1}{2}\left(\frac{m}{\overline{\epsilon}}-\rho\right)\|x-T_{D,\epsilon}(x)\|^2\nonumber\\
&\leq&\|\nabla f(x)+\nu\|\cdot\|x-T_{D,\epsilon}(x)\|-\frac{1}{2}\left(\frac{m}{\overline{\epsilon}}-\rho\right)\|x-T_{D,\epsilon}(x)\|^2\nonumber\\
&\leq&\frac{\overline{\epsilon}}{2(m-\overline{\epsilon}\rho)}\|\nabla f(x)+\nu\|^2.
\end{eqnarray}
Therefore $G_{D,\epsilon}(x)\leq\frac{1}{2(m-\overline{\epsilon}\rho)}\|\nabla f(x)+\nu\|^2$, $\forall\nu\in\partial_Pg(x)$, and the claim is verified.\\
{\rm(iv):} The statement is a simple consequence of (ii) and (iii).
{\rm(v):} The claim follows directly from statements (ii), (iii) and~\eqref{eq:13}.
\end{proof}
\begin{remark}\label{remark2.2}
If $x\in\mathfrak{B}(\overline{x};\frac{\eta}{2},\frac{\nu}{N})$ with $N\geq\frac{2\overline{\epsilon}\nu}{m-\overline{\epsilon}L}/\left(\frac{\eta}{2}\right)^2$ satisfies Property (A), by Lemma ~\ref{lemma:tp}, we have  that $t_{D,\epsilon}(x)\in\mathbb{B}(\overline{x};\eta)$, $\forall t_{D,\epsilon}(x)\in T_{D,\epsilon}(x)$. Furthermore, if $g$ is uniformly proximal regular with $\rho$ and $\eta$ on $\mathbb{B}(\overline{x};\eta)$, then by the definition of uniformly proximal regular function~\eqref{eq:varphi-0}, we conclude that the statements of Proposition~\ref{prop:singlevalue} and~\ref{prop:Gk} still hold on $\mathbb{B}(\overline{x};\eta)$.
\end{remark}
\subsection{The strong level-set error bounds and necessary and sufficient conditions for linear convergence}
Now we introduce the notion of the strong level-set error bounds holding on a  set $[\overline{F}\leq F\leq\overline{F}+\nu]$. This notion along with Proposition~\ref{prop:nec-suf} plays an important role in deriving
a sufficient condition and a necessary condition for linear convergence relative to level sets.
\begin{definition}[Strong Level-set subdifferential error bound] We say that $F$ satisfies the strong level-set subdifferential error bound condition on $[\overline{F}<F<\overline{F}+\nu]$ with the values $\overline{F}$ and $\nu>0$ if there exists $c_3'>0$ such that the following inequality holds:
\begin{equation}\label{LSSEB}
dist(x,[F\leq\overline{F}])\leq c_3'dist\big{(}0,\partial_P F(x)\big{)},~~\forall x\in[\overline{F}<F<\overline{F}+\nu].
\end{equation}
\end{definition}
\begin{definition}[Strong level-set Bregman error bound]
Given a Bregman distance $D$ along with $\epsilon>0$, we say that  $F$ satisfies the strong level-set Bregman proximal (BP) error bound condition on $[\overline{F}<F<\overline{F}+\nu]$ with the values $\overline{F}$ and $\nu>0$ if there exists $\theta'>0$ such that the following inequality holds:
\begin{equation}\label{LSBEB}
dist(x,[F\leq\overline{F}])\leq\theta'dist\left(x,T_{D,\epsilon}(x)\right),~~\forall x\in[\overline{F}<F<\overline{F}+\nu].
\end{equation}
\end{definition}
\begin{corollary}[Strong level-set subdifferential EB $\Rightarrow$ Strong level-set Bregman proximal EB]\label{Corollary:Strong}
Suppose Assumptions~\ref{assump1} and~\ref{assump2} hold with $\overline{\epsilon}<\frac{m}{L}$. Assume the strong level-set subdifferential error bound holds over $[\overline{F}<F<\overline{F}+\nu]$. Then there is $\theta'=1+c_3'(L+\frac{M}{\underline{\epsilon}})>0$ such that
\begin{equation}
dist(x,[F\leq\overline{F}])\leq\theta'dist\left(x,T_{D,\epsilon}(x)\right),~~\forall x\in[\overline{F}<F<\overline{F}+\nu].
\end{equation}
\end{corollary}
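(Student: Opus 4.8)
The plan is to mimic the proof of Theorem~\ref{theo:4.2}, but in a streamlined form: since the hypothesis now holds on the \emph{entire} band $[\overline{F}<F<\overline{F}+\nu]$, no ball-shrinking device (no analogue of Lemma~\ref{lemma:tp}) is needed, and the exponent bookkeeping collapses because the error-bound exponent is $1$. Fix $x\in[\overline{F}<F<\overline{F}+\nu]$. Since $\overline{\epsilon}<m/L$, Proposition~\ref{prop:Ek}(i) gives $T_{D,\epsilon}(x)\neq\emptyset$ and compact, so I may choose $t_{p}(x)\in Proj_{T_{D,\epsilon}(x)}(x)$; then $\|x-t_{p}(x)\|=dist\left(x,T_{D,\epsilon}(x)\right)$.

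First I would dispose of the trivial case $F(t_{p}(x))\leq\overline{F}$: here $t_{p}(x)\in[F\leq\overline{F}]$, so $dist(x,[F\leq\overline{F}])\leq\|x-t_{p}(x)\|=dist\left(x,T_{D,\epsilon}(x)\right)$, which already yields the claim with constant $1\leq\theta'$. The substantive case is $F(t_{p}(x))>\overline{F}$, and the one point requiring care is to verify that $t_{p}(x)$ still lies in the band, so that \eqref{LSSEB} is applicable there. This follows from the monotone descent of the Bregman proximal mapping: by Proposition~\ref{prop:Ek}(iv), $F(t_{p}(x))\leq F(x)-\tfrac12\big(\tfrac{m}{\overline{\epsilon}}-L\big)\|x-t_{p}(x)\|^{2}\leq F(x)<\overline{F}+\nu$, hence $\overline{F}<F(t_{p}(x))<\overline{F}+\nu$, i.e. $t_{p}(x)\in[\overline{F}<F<\overline{F}+\nu]$.

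Then I would apply the strong level-set subdifferential error bound \eqref{LSSEB} at the point $t_{p}(x)$ to get $dist(t_{p}(x),[F\leq\overline{F}])\leq c_{3}'\,dist\big(0,\partial_{P}F(t_{p}(x))\big)$, and bound the right-hand side using Proposition~\ref{prop:Fk}(ii), namely $dist\big(0,\partial_{P}F(t_{p}(x))\big)\leq\big(L+\tfrac{M}{\underline{\epsilon}}\big)\|x-t_{p}(x)\|$. Combining these with the triangle inequality gives
\[
dist(x,[F\leq\overline{F}])\leq\|x-t_{p}(x)\|+dist(t_{p}(x),[F\leq\overline{F}])\leq\Big(1+c_{3}'\big(L+\tfrac{M}{\underline{\epsilon}}\big)\Big)\|x-t_{p}(x)\|,
\]
and since $\|x-t_{p}(x)\|=dist\left(x,T_{D,\epsilon}(x)\right)$, this is exactly the asserted inequality with $\theta'=1+c_{3}'\big(L+\tfrac{M}{\underline{\epsilon}}\big)$. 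There is no genuine obstacle; the only step that is not purely mechanical is checking that the projected point $t_{p}(x)$ stays inside $[\overline{F}<F<\overline{F}+\nu]$, which is immediate from the descent estimate in Proposition~\ref{prop:Ek}(iv).
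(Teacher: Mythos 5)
Your proposal is correct and is essentially the paper's own proof: the paper simply says the claim follows by the argument of Theorem~\ref{theo:4.2} with $\gamma=1$, and you have carried out exactly that argument, with the same case split on $F(t_p(x))$, the same use of Proposition~\ref{prop:Fk}(ii), and the same constant $\theta'=1+c_3'(L+\frac{M}{\underline{\epsilon}})$. Your explicit verification via Proposition~\ref{prop:Ek}(iv) that $t_p(x)$ stays in the band $[\overline{F}<F<\overline{F}+\nu]$ (replacing the ball-shrinking device of Lemma~\ref{lemma:tp}) is precisely the detail the paper leaves implicit, so nothing further is needed.
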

\begin{proof}
The claim is proved by the same argument for the proof of Theorem~\ref{theo:4.2} with $\gamma=1$.
\end{proof}

The next proposition will be used to derive a necessary condition and a sufficient condition for linear convergence
with respect to a set.

\begin{proposition}\label{prop:nec-suf}
Suppose Assumptions~\ref{assump1} and~\ref{assump2} hold. Then the following statements hold:
\begin{itemize}
\item[{\rm(i)}] If $F$ satisfies the strong level-set subdifferential error bound condition with $c_3'$ and for given $\overline{F}$, $x\in[\overline{F}<F<\overline{F}+\nu]$ satisfies Property (A), then we have the following  inequality respect to the set $[F\leq\overline{F}]$
    \begin{eqnarray}\label{eq:x-Q}
    dist\left(t_{D,\epsilon}(x),[F\leq\overline{F}]\right)\leq\beta dist\left(x,[F\leq\overline{F}]\right),\quad\forall x\in[\overline{F}<F<\overline{F}+\nu]
    \end{eqnarray}
    with $\beta=\sqrt{\mathfrak{b}-\frac{\mathfrak{c}}{(\theta')^2}}$, $\theta'=1+c_3'(L+\frac{M}{\underline{\epsilon}})$, $\mathfrak{b}$ and $\mathfrak{c}$ are appeared in Lemma~\ref{lemma:1}. Moreover, if $\theta'\in(\sqrt{\frac{c}{b}},\sqrt{\frac{c}{b-1}})$, then $\beta\in(0,1)$.
\item[{\rm(ii)}]  If $g$ is semi-convex on $[\overline{F}<F<\overline{F}+\nu]$ and $\overline{\epsilon}<\min\{\frac{m}{L},\frac{m}{\rho}\}$, then the inequality~\eqref{eq:x-Q} with $\beta\in (0,1)$ implies the strong level-set subdifferential error bound on $[\overline{F}<F<\overline{F}+\nu]$ with $c_3'=\frac{\overline{\epsilon}}{(1-\beta)(m-\overline{\epsilon}\rho)}$.
\end{itemize}
\end{proposition}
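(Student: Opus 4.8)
The plan is to establish the two implications separately, using the generalized descent inequality of Lemma~\ref{lemma:1} for (i) and the single‑valuedness and gradient‑type estimates of Section~6 for (ii).

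For (i), fix $x\in[\overline{F}<F<\overline{F}+\nu]$ satisfying Property~(A), pick $x_p\in Proj_{[F\leq\overline{F}]}(x)$, and note that $F(x_p)=\overline{F}$ by Lemma~\ref{lemma1.5}, so $\|x-x_p\|=dist(x,[F\leq\overline{F}])$. Applying Lemma~\ref{lemma:1} with $u=x_p$ and using Property~(A) in the form $F(t_{D,\epsilon}(x))\geq\overline{F}=F(x_p)$ to absorb the nonnegative term $\mathfrak{a}[F(t_{D,\epsilon}(x))-\overline{F}]$, I would obtain
\[
dist^{2}\!\left(t_{D,\epsilon}(x),[F\leq\overline{F}]\right)\leq\|t_{D,\epsilon}(x)-x_p\|^{2}\leq\mathfrak{b}\,dist^{2}\!\left(x,[F\leq\overline{F}]\right)-\mathfrak{c}\,\|x-t_{D,\epsilon}(x)\|^{2}.
\]
The remaining task is to bound $\|x-t_{D,\epsilon}(x)\|$ from below by a multiple of $dist(x,[F\leq\overline{F}])$. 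First I would check that $t_{D,\epsilon}(x)$ sits in the strip: Proposition~\ref{prop:Ek}(iv) gives $F(t_{D,\epsilon}(x))\leq F(x)<\overline{F}+\nu$, Property~(A) gives $F(t_{D,\epsilon}(x))\geq\overline{F}$, and in the boundary case $F(t_{D,\epsilon}(x))=\overline{F}$ the desired inequality holds trivially, so we may assume $t_{D,\epsilon}(x)\in[\overline{F}<F<\overline{F}+\nu]$. Then the strong level‑set subdifferential error bound applied at $t_{D,\epsilon}(x)$, together with Proposition~\ref{prop:Fk}(ii), yields $dist(t_{D,\epsilon}(x),[F\leq\overline{F}])\leq c_3'(L+\frac{M}{\underline{\epsilon}})\|x-t_{D,\epsilon}(x)\|$, and hence, via the triangle inequality, $dist(x,[F\leq\overline{F}])\leq\theta'\|x-t_{D,\epsilon}(x)\|$ with $\theta'=1+c_3'(L+\frac{M}{\underline{\epsilon}})$ — this is exactly the strong level‑set Bregman error bound of Corollary~\ref{Corollary:Strong}. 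Substituting $\|x-t_{D,\epsilon}(x)\|\geq\frac{1}{\theta'}dist(x,[F\leq\overline{F}])$ (legitimate since $\mathfrak{c}\geq 0$, which is implicit in the stated range for $\theta'$) gives
\[
dist^{2}\!\left(t_{D,\epsilon}(x),[F\leq\overline{F}]\right)\leq\left(\mathfrak{b}-\frac{\mathfrak{c}}{(\theta')^{2}}\right)dist^{2}\!\left(x,[F\leq\overline{F}]\right),
\]
so $\beta=\sqrt{\mathfrak{b}-\mathfrak{c}/(\theta')^{2}}$; and $\beta\in(0,1)$ is equivalent to $0<\mathfrak{b}-\mathfrak{c}/(\theta')^{2}<1$, i.e.\ to $\sqrt{\mathfrak{c}/\mathfrak{b}}<\theta'<\sqrt{\mathfrak{c}/(\mathfrak{b}-1)}$, which is elementary algebra.

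For (ii), I would invoke Section~6: under semiconvexity of $g$ on the strip and $\overline{\epsilon}<\min\{\frac{m}{L},\frac{m}{\rho}\}$, the mapping $T_{D,\epsilon}$ is single‑valued (Proposition~\ref{prop:singlevalue}), and Proposition~\ref{prop:Gk}(iv) gives $\|x-T_{D,\epsilon}(x)\|\leq\frac{\overline{\epsilon}}{m-\overline{\epsilon}\rho}\,dist(0,\partial_P F(x))$ for $x$ in the strip (reading the estimate in the localized form of Remark~\ref{remark2.2}). For $x\in[\overline{F}<F<\overline{F}+\nu]$, the triangle inequality and hypothesis~\eqref{eq:x-Q} (the case $F(T_{D,\epsilon}(x))\leq\overline{F}$ being trivial, since then the left side of~\eqref{eq:x-Q} is $0$) give
\[
dist\!\left(x,[F\leq\overline{F}]\right)\leq\|x-T_{D,\epsilon}(x)\|+dist\!\left(T_{D,\epsilon}(x),[F\leq\overline{F}]\right)\leq\|x-T_{D,\epsilon}(x)\|+\beta\,dist\!\left(x,[F\leq\overline{F}]\right).
\]
Rearranging (using $\beta<1$) and inserting the bound on $\|x-T_{D,\epsilon}(x)\|$ yields $dist(x,[F\leq\overline{F}])\leq\frac{\overline{\epsilon}}{(1-\beta)(m-\overline{\epsilon}\rho)}\,dist(0,\partial_P F(x))$, which is the strong level‑set subdifferential error bound with $c_3'=\frac{\overline{\epsilon}}{(1-\beta)(m-\overline{\epsilon}\rho)}$, as claimed.

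I expect the main obstacle to be the middle step of (i): verifying that $t_{D,\epsilon}(x)$ remains inside the strip so that the strong subdifferential error bound can be re‑applied there, and then recognizing that the resulting inequality is precisely the strong level‑set Bregman error bound of Corollary~\ref{Corollary:Strong}, which supplies the lower bound on $\|x-t_{D,\epsilon}(x)\|$ needed to convert the descent inequality into a contraction. One also has to be careful about which row of the table in Lemma~\ref{lemma:1} supplies $\mathfrak{b}$ and $\mathfrak{c}$ and about ensuring $\mathfrak{c}>0$ there (which needs $\overline{\epsilon}$ small enough and is what makes the stated interval for $\theta'$ meaningful); everything else is bookkeeping with the triangle inequality and elementary algebra.
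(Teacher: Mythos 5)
Your proposal is correct and follows essentially the same route as the paper: part (i) applies the generalized descent inequality of Lemma~\ref{lemma:1} with $u=x_p$, uses Property~(A), and converts the strong level-set subdifferential EB into the strong level-set Bregman EB (which you re-derive inline, while the paper simply invokes Corollary~\ref{Corollary:Strong}) to lower-bound $\|x-t_{D,\epsilon}(x)\|$ and obtain the contraction factor $\beta=\sqrt{\mathfrak{b}-\mathfrak{c}/(\theta')^{2}}$; part (ii) is the same triangle-inequality argument combined with Proposition~\ref{prop:Gk}(iv). Your added checks (that $t_{D,\epsilon}(x)$ stays in the strip and that $\mathfrak{c}\geq 0$ is needed for the substitution) are points the paper leaves implicit, not deviations in method.
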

\begin{proof}
\begin{itemize}
\item[{\rm(i)}] For $x\in[\overline{F}<F<\overline{F}+\nu]$, let $x_p={Proj}_{[F\leq\overline{F}]}(x)$ in Lemma~\ref{lemma1.5}. Then $F(x_p)=\overline{F}$. By Lemma~\ref{lemma:1} with  $u=x_p$ in~\eqref{eq:descent}, we have $F\left(t_{D,\epsilon}(x)\right)\geq \overline{F}$ and
    \begin{eqnarray}\label{eq:60}
    0\leq\mathfrak{a}[F\left(t_{D,\epsilon}(x)\right)-F(x_p)]\leq\mathfrak{b}\|x_p-x\|^2-\|x_p-t_{D,\epsilon}(x)\|^2-\mathfrak{c}\|x-t_{D,\epsilon}(x)\|^2,
    \end{eqnarray}
    From Corollary~\ref{Corollary:Strong}, the strong level-set subdifferential EB condition implies  the strong level-set Bregman EB condition with $\theta'=1+c_3'(L+\frac{M}{\underline{\epsilon}})$. Thanks to the strong level-set Bregman proximal error bound condition on $[\overline{F}<F<\overline{F}+\nu]$ (see~\eqref{LSBEB}), from~\eqref{eq:60} we have
    \begin{eqnarray}
    \|x_p-t_{D,\epsilon}(x)\|^2&\leq&\mathfrak{b}\|x_p-x\|^2-\mathfrak{c}\|x-t_{D,\epsilon}(x)\|^2\nonumber\\
                               &\leq&\mathfrak{b}\|x_p-x\|^2-\frac{\mathfrak{c}}{(\theta')^2}\|x_p-x\|^2.
    \end{eqnarray}
    and
    \begin{eqnarray}
    dist\left(t_{D,\epsilon}(x),[F\leq\overline{F}]\right)\leq\|x_p-t_{D,\epsilon}(x)\|\leq\left(\mathfrak{b}-\frac{\mathfrak{c}}{(\theta')^2}\right)^{\frac{1}{2}}dist\left(x,[F\leq\overline{F}]\right).
    \end{eqnarray}
\item[{\rm(ii)}]  By semi-convexity of $g$ and $\overline{\epsilon}<\min\{\frac{m}{L},\frac{m}{\rho}\}$, $T_{D,\epsilon}(x)$ is single-valued. Let $T_{D,\epsilon}(x)_p=Proj_{[F\leq\overline{F}]}\left(T_{D,\epsilon}(x)\right)$.  Then we see that
    \begin{eqnarray}\label{TD}
    dist\left(x, [F\leq\overline{F}]\right)&\leq&\|x-T_{D,\epsilon}(x)_p\|\nonumber\\
                                           &\leq&\|T_{D,\epsilon}(x)-T_{D,\epsilon}(x)_p\|+\|x-T_{D,\epsilon}(x)\|\nonumber\\
                                           &=&dist\left(T_{D,\epsilon}(x),[F\leq\overline{F}]\right)+dist\left(x,T_{D,\epsilon}(x)\right)\nonumber\\
                                           &\leq&\beta dist\left(x,[F\leq\overline{F}]\right)+dist\left(x,T_{D,\epsilon}(x)\right).
    \end{eqnarray}
    By the statement (iv) of Proposition~\ref{prop:Gk}, we have
    \begin{eqnarray}
    dist\left(x,[F\leq\overline{F}]\right)&\leq&\frac{1}{(1-\beta)}dist\left(x,T_{D,\epsilon}(x)\right)\nonumber\\
                                          &\leq&\frac{\overline{\epsilon}}{(1-\beta)(m-\overline{\epsilon}\rho)}dist\left(0,\partial_PF(x)\right),
    \end{eqnarray}
    which completes the proof.
\end{itemize}
\end{proof}
The following theorem gives a necessary condition and a sufficient condition for linear convergence relative to a level set.
\begin{theorem}({\bf Necessary and sufficient conditions for linear convergence relative to $[F\leq\overline{F}]$})\label{theo:n-s}
Let a sequence $\{x^k\}$ be generated by the VBPG method, let $\overline{x}$ be an accumulation point of $\{x^k\}$, and let $\nu>0$ be given.
\begin{itemize}
\item[{\rm(i)}] For any  initial point $x^0\in[\overline{F}<F<\overline{F}+\nu]$,  if the strong level-set subdifferential error bound condition holds  on $[\overline{F}<F<\overline{F}+\nu]$ with $c_3'$, then the VBPG method converges linearly respect to level-set $[F\leq\overline{F}]$, i.e.,
\begin{eqnarray}\label{x-Q-linear}
dist\left(x^{k+1},[F\leq\overline{F}]\right)\leq\beta dist\left(x^{k},[F\leq\overline{F}]\right),\quad k\geq0,
\end{eqnarray}
with $\beta=\sqrt{\mathfrak{b}-\frac{\mathfrak{c}}{(\theta')^2}}$ and $\beta\in(0,1)$, $\theta'=1+c_3'(L+\frac{M}{\underline{\epsilon}})\in\left(\sqrt{\frac{\mathfrak{c}}{\mathfrak{b}}},\sqrt{\frac{\mathfrak{c}}{\mathfrak{b}-1}}\right)$, where the values of $\mathfrak{b}$ and $\mathfrak{c}$ are appeared in Lemma~\ref{lemma:1}.
\item[{\rm(ii)}] If $g$ is semi-convex on $\RR^n$, $\overline{\epsilon}<\min\{\frac{m}{L},\frac{m}{\rho}\}$ and the VBPG method converges linearly in the sense of~\eqref{x-Q-linear} with $\beta\in (0,1)$, then $F$ satisfies the strong level-set subdifferential error bound condition~\eqref{LSBEB} on $[\overline{F}<F<\overline{F}+\nu]$ with $c_3'=\frac{\overline{\epsilon}}{(1-\beta)(m-\overline{\epsilon}\rho)}$.
\end{itemize}
\end{theorem}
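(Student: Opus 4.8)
The plan is to obtain both parts as restatements of Proposition~\ref{prop:nec-suf} along the trajectory $\{x^k\}$ of the VBPG method, once the reference value $\overline{F}$ has been identified with the limiting objective value. Throughout I keep the standing Assumptions~\ref{assump1}--\ref{assump2} with $\overline{\epsilon}$ in the range required by Lemma~\ref{lemma1} (so that $a>0$ and the Bregman proximal mappings are nonempty).

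For (i): since $\{x^k\}$ is generated by VBPG, Lemma~\ref{lemma1} makes $\{F(x^k)\}$ non-increasing, and Proposition~\ref{proposition5} identifies its limit with $F(\overline{x})=\overline{F}$; hence $F(x^k)\ge\overline{F}$ for every $k$. If $x^{k^{\ast}}=x^{k^{\ast}+1}$ for some $k^{\ast}$ (equivalently $x^{k^{\ast}}\in\overline{\mathbf{X}}_P$), then $F(x^{k^{\ast}})=\overline{F}$, so $dist(x^{k},[F\le\overline{F}])=0$ for all $k\ge k^{\ast}$ and \eqref{x-Q-linear} holds trivially from that index on; otherwise $F(x^{k})>\overline{F}$ strictly for all $k$, and the hypothesis $x^0\in[\overline{F}<F<\overline{F}+\nu]$ together with monotonicity gives $x^{k}\in[\overline{F}<F<\overline{F}+\nu]$ for all $k$. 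On this trajectory I would rerun the computation behind Proposition~\ref{prop:nec-suf}(i): take $u=x_p:=Proj_{[F\le\overline{F}]}(x^{k})$, so $F(x_p)=\overline{F}$ by Lemma~\ref{lemma1.5}, and apply Lemma~\ref{lemma:1} with $t_{D^{k},\epsilon^{k}}(x^{k})=x^{k+1}$. Because $F(x^{k+1})\ge\overline{F}$ the left-hand side of the descent inequality is nonnegative, and Corollary~\ref{Corollary:Strong} turns the assumed strong level-set subdifferential error bound into the strong level-set Bregman error bound with $\theta'=1+c_3'(L+\frac{M}{\underline{\epsilon}})$; since $x^{k+1}\in T_{D^{k},\epsilon^{k}}(x^{k})$ this gives $\|x^{k}-x^{k+1}\|\ge\frac{1}{\theta'}\,dist(x^{k},[F\le\overline{F}])$. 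Substituting into the descent inequality yields $\|x_p-x^{k+1}\|^2\le(\mathfrak{b}-\frac{\mathfrak{c}}{(\theta')^2})\,dist^2(x^{k},[F\le\overline{F}])$, hence \eqref{x-Q-linear} with $\beta=\sqrt{\mathfrak{b}-\mathfrak{c}/(\theta')^2}$; the hypothesis $\theta'\in(\sqrt{\mathfrak{c}/\mathfrak{b}},\sqrt{\mathfrak{c}/(\mathfrak{b}-1)})$ is exactly the condition that places $\beta$ in $(0,1)$.

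For (ii): the work is to upgrade the sequential contraction \eqref{x-Q-linear} to the pointwise contraction \eqref{eq:x-Q} that feeds Proposition~\ref{prop:nec-suf}(ii). I would do this by the ``arbitrary point as initial point'' device: fix one admissible Bregman distance $D$ and one $\epsilon$ with $\overline{\epsilon}<\min\{\frac{m}{L},\frac{m}{\rho}\}$; since $g$ is semiconvex, Proposition~\ref{prop:singlevalue} makes $T_{D,\epsilon}$ single-valued, so for any $x\in[\overline{F}<F<\overline{F}+\nu]$ the VBPG run started at $x^0=x$ with the constant data $D^j\equiv D$, $\epsilon^j\equiv\epsilon$ produces $x^1=T_{D,\epsilon}(x)$. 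Invoking the assumed \eqref{x-Q-linear} at $k=0$ gives $dist(T_{D,\epsilon}(x),[F\le\overline{F}])\le\beta\,dist(x,[F\le\overline{F}])$, i.e. exactly \eqref{eq:x-Q}; since $x$ ranges over the whole slab, Proposition~\ref{prop:nec-suf}(ii) then yields the strong level-set subdifferential error bound on $[\overline{F}<F<\overline{F}+\nu]$ with $c_3'=\frac{\overline{\epsilon}}{(1-\beta)(m-\overline{\epsilon}\rho)}$ (the passage from $dist(x,T_{D,\epsilon}(x))$ to $dist(0,\partial_P F(x))$ being supplied by part (iv) of Proposition~\ref{prop:Gk}).

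I expect the main difficulty to be bookkeeping rather than conceptual depth. In (i) one must be careful to run Property~(A) and Lemma~\ref{lemma:1} with the \emph{limiting} value $F_\zeta=\overline{F}$, so that $F(x^{k+1})\ge\overline{F}$ really holds, and to dispatch the degenerate termination case separately; in (ii) the only non-routine point is recognizing that single-valuedness of $T_{D,\epsilon}$ makes ``$x^1$'' literally equal to ``$T_{D,\epsilon}(x)$'', which is what converts the algorithm's linear-rate hypothesis into a statement about an arbitrary test point, together with keeping track of the constant $c_3'$ through Proposition~\ref{prop:nec-suf}(ii).
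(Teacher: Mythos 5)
Your proposal is correct and follows essentially the same route as the paper: the paper's proof observes that $F(x^k)>\overline{F}$ (strict decrease toward the accumulation value) so that each $x^k$ stays in the slab and satisfies Property (A), and then invokes Proposition~\ref{prop:nec-suf} for both directions, which is exactly what you do — you merely inline the computation behind Proposition~\ref{prop:nec-suf}(i) via Lemma~\ref{lemma:1} and Corollary~\ref{Corollary:Strong}, and make explicit the degenerate termination case and the ``arbitrary point as initial point'' reading of part (ii) that the paper's one-line proof leaves implicit.
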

\begin{proof} Since $\overline{F}=F(\overline{x})$ and  $\{F(x^k)\}$ is strictly decreasing, we have $F(x^k)>\overline{F}$, thus $\forall k,\;x^k$ satisfies Property (A) trivially. The claim follows directly from Proposition~\ref{prop:nec-suf}.
\end{proof}
\begin{remark}
For problem (P), if  $F$ attains the global minimum value $F^*$ at every critical point , then solution set $\mathbf{X}^*=[F\leq F^*]$, $x^k\in[F^*<F<F^*+\nu]$, and  the inequality ~\eqref{x-Q-linear} with respect to $[F\leq F^*]$ becomes
\begin{eqnarray}\label{x-Q-linear-2}
dist\left(x^{k+1},\mathbf{X}^*\right)\leq\beta dist\left(x^k,\mathbf{X}^*\right).
\end{eqnarray}
Observe that a convex or an invex function $F$ satisfies (\ref{x-Q-linear-2}). Furthermore, conditions such as proximal-PL, a global version of KL and proximal EB in~\cite{Schmidt2016} also guarantee (\ref{x-Q-linear-2}).
\end{remark}
\section{Connections with  known error bounds in literature  and applications }\label{sec:level-set type error relationships}
In this section we will examine the relationships of level-set error bounds with existing error bounds. The previous necessary and sufficient condition of linear convergence results of VBPG allow us to exploit the novel convergence results for various existing algorithms. Although  we only study the ``local" version error bounds on $\mathfrak{B}(\overline{x};\eta,\nu)$ in this section, but the same analysis used in this section can be readily extended to  ``global" version error bounds on $[\overline{F}<F<\overline{F}+\nu]$.
\subsection{First type error bounds with target set $\overline{X}_P$ }\label{subsec:error bounds of point}
Let $\overline{x}\in\overline{\mathbf{X}}_P$, we study conditions under which  the distance from any vector $x\in
\mathfrak{B}(\overline{x};\eta,\nu)$ to the set  $\overline{\mathbf{X}}_P$ is bounded by a residual function $R_1(x)$, raised to a certain power, evaluated at $x$.
Specifically, we study the existence of some $\gamma_1$, $\delta_1$, such that
$$dist^{\gamma_1}(x,\overline{\mathbf{X}}_P)\leq\delta_1 R_1(x),\quad\forall x\in\mathfrak{B}(\overline{x};\eta,\nu).$$
An expression of this kind is called a first type error bound with target set $\overline{X}_P$ for (P).

\subsubsection{ Important  examples with target set $\overline{X}_P$}

\begin{definition}[Weak metric-subregularity]  We say that $\partial_P F$ is weakly metrically subregular at $\overline{x} \in \overline{X}_P$ for the zero vector $0$ if there exist $\eta$, $\nu$ and $c_5$ such that
\begin{equation}\label{subregularity}
c_5dist\big{(}x, \overline{\mathbf{X}}_P\big{)}\leq dist\big{(}0,\partial_PF(x)\big{)}, \forall x\in\mathfrak{B}(\overline{x};\eta,\nu).
\end{equation}
\end{definition} A few remarks about (\ref{subregularity}) are in order. Metric subregularity of a set-valued mapping is a well-known notion in
variational analysis. See the monograph \cite{DoR2009} by Dontchev and Rockafellar for motivations, theory, and applications.
In (\ref{subregularity}) if $\mathfrak{B}(\overline{x};\eta,\nu)$ is replaced  by $\mathbb{B}(\overline{x};\eta)$, then
(\ref{subregularity}) is equivalent to
metric subregularity of the set-value mapping $\partial_P F$ at $\bar x$ for the vector $0$ (see Exercise 3H.4 of \cite{DoR2009}) for a proof. Another important notion in variational analysis is calmness of a set-valued mapping. By Theorem 3H.3
of \cite{DoR2009},  metric subregularity of $\partial_P F$ at $\bar x$ for the vector $0$ is equivalent to
the inverse set-valued mapping $(\partial_P F)^{-1}$ is calm at the zero vector $0$ for $\bar x$. In this regard, metric subregularity and calmness can be used to examine properties of a set-valued mapping at a point from two distinct perspectives.  For the set-valued mapping $\partial_P F$, this equivalence can be precisely stated as follows:
\begin{proposition}{\bf (Equivalence of metric subregularity and calmness :
Theorem 3H.3 and Exercise 3H.4 of \cite{DoR2009})}

Let $\partial_P F:\RR^n\rightarrow\RR^n$ be the subdifferential set-valued mapping. Suppose that $0\in \partial_P (\overline{x})$. Then
the following statements are equivalent.

(i) There are $\eta>0$ and $\kappa$ such that
$$(\partial _PF)^{-1} (x^*)\cap \mathfrak{B}(\overline{x};\eta)\subset \overline{X}_P+\kappa\mathbb{B}(0;||x^*||)~\forall x^*\in \RR^n~(\mbox{calmness}).$$

(ii)  There are $\eta>0$ and $\kappa$ such that
 $$dist\big{(}x, \overline{\mathbf{X}}_P\big{)}\leq \kappa dist\big{(}0,\partial_PF(x)\big{)}, \forall x\in\mathbb{B}(\overline{x};\eta)~~(\mbox{metric subregularity}).$$
Furthermore, if (ii) holds, then (\ref{subregularity}) holds; that is, $\partial_P F$ is weakly mertric-subregular at
$\overline{x}$ for the zero vector $0$.
\end{proposition}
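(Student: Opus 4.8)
The plan is to recognize that the equivalence of (i) and (ii) is nothing more than the specialization of the general calmness/metric-subregularity duality in \cite{DoR2009} to the particular set-valued mapping $S:=\partial_P F$ from $\RR^n$ to $\RR^n$, so the real work is a translation of notation plus two elementary set inclusions. First I would record the dictionary between the abstract notions and the present setting: since $S^{-1}(0)=\{x\in\RR^n:0\in\partial_P F(x)\}=\overline{\mathbf{X}}_P$, condition (ii) says precisely that $S$ is metrically subregular at $\overline{x}$ for the value $0$ (with constant $\kappa$ and ball $\mathbb{B}(\overline{x};\eta)$), while condition (i) says precisely that the inverse $(\partial_P F)^{-1}=S^{-1}$ is calm at $0$ for $\overline{x}$. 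With this dictionary in place, Theorem 3H.3 of \cite{DoR2009} gives ``metric subregularity of $S$ at $\overline{x}$ for $0$'' $\Longleftrightarrow$ ``calmness of $S^{-1}$ at $0$ for $\overline{x}$'', and Exercise 3H.4 of \cite{DoR2009} supplies the full neighborhood-ball form of the subregularity estimate appearing in (ii); combining the two yields (i) $\Longleftrightarrow$ (ii). Note that nothing here uses that $S$ is a subdifferential; it is a general fact about the graph and its inverse.

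The one point that needs a brief verification is the global quantifier ``$\forall x^*\in\RR^n$'' in (i), since the textbook calmness property is stated only for $x^*$ in a neighborhood of $0$. Here I would argue that the localized inclusion automatically upgrades to all of $\RR^n$: because $\overline{x}\in\overline{\mathbf{X}}_P$ (as $0\in\partial_P F(\overline{x})$) and every $x\in\mathbb{B}(\overline{x};\eta)$ satisfies $\|x-\overline{x}\|<\eta$, enlarging $\kappa$ if necessary so that $\kappa\|x^*\|\ge\eta$ whenever $x^*$ lies outside the original neighborhood of $0$ forces $x\in\overline{x}+\kappa\|x^*\|\mathbb{B}\subset\overline{\mathbf{X}}_P+\kappa\mathbb{B}(0;\|x^*\|)$; for the remaining (small) values of $x^*$ the inclusion is exactly the textbook statement. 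This is routine bookkeeping and is essentially the only place where a little care is required.

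Finally, for the last sentence — that (ii) implies the weak metric-subregularity estimate \eqref{subregularity} — I would simply observe that $\mathfrak{B}(\overline{x};\eta,\nu)=\mathbb{B}(\overline{x};\eta)\cap\{x:\overline{F}<F(x)<\overline{F}+\nu\}\subseteq\mathbb{B}(\overline{x};\eta)$ for every $\nu>0$. Hence the inequality in (ii) holds in particular for all $x\in\mathfrak{B}(\overline{x};\eta,\nu)$, so \eqref{subregularity} is satisfied with the same $\eta$, any $\nu>0$, and $c_5=1/\kappa$. There is no genuine obstacle in the argument: everything reduces to the cited theorem together with these two inclusions, and the only subtlety is the quantifier-matching in the calmness condition described above.
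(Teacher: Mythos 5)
Your proposal matches the paper's treatment: the paper offers no independent proof of this proposition but simply invokes Theorem 3H.3 and Exercise 3H.4 of \cite{DoR2009}, exactly as you do, with the identification $(\partial_P F)^{-1}(0)=\overline{\mathbf{X}}_P$ and the observation that $\mathfrak{B}(\overline{x};\eta,\nu)\subseteq\mathbb{B}(\overline{x};\eta)$ giving the final ``weak metric-subregularity'' claim. Your extra bookkeeping (the quantifier upgrade to all $x^*\in\RR^n$ via enlarging $\kappa$, using $0\in\partial_P F(\overline{x})$) is correct and only fills in routine detail the paper leaves implicit.
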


\begin{definition}[Bregman proximal error bound]
Given a Bregman function $D$ along with $\epsilon>0$, we say that the Bregman proximal error bound holds
at $\overline{x} \in \overline{X}_P$  if there exist $\eta$, $\nu$ and $c_4$ such that
\begin{equation}\label{Bregman proximal error bound}
dist(x,\overline{\mathbf{X}}_P)\leq c_4dist\big{(}x, T_{D,\epsilon}(x)\big{)}, \forall x\in\mathfrak{B}(\overline{x};\eta,\nu).
\end{equation}
\end{definition}
\begin{assumption} [H$_3$]\label{assump4}
There is a $\delta>0$ such that $F(y)\leq F(\overline{x})$ whenever $y\in\overline{\mathbf{X}}_P$ and $\|y-\overline{x}\|\leq\delta$.
\end{assumption}
Next theorem states that, for $F$, if the Bregman proximal error bound or  $\partial_P F$ is weakly metric-subregular at $\overline{x}\in\overline{\mathbf{X}}_P$ for the zero vector $0$, and {\bf(H$_3$)} holds at $\overline{x}$, then the level-set subdifferential error bound  holds at $\overline{x}$.
\begin{theorem}\label{theorem:subregularity}{\bf (Level-set subdifferential error bound under the Bregman proximal error bound or weak metric-subregularity )} Suppose  Assumption~\ref{assump1} holds, and Assumption {\bf(H$_3$)} holds
at $\overline{x}\in\overline{\mathbf{X}}_P$. If one of the following condition holds
\begin{itemize}
\item[{\rm(i)}] the Bregman proximal error bound  holds uniformly at $\overline{x}$, and $g$ is semiconvex or $g$ is uniformly prox-regular around $\overline{x}$ with $\rho$, $\eta$, $\overline{\epsilon}<\min\{\frac{m}{L},\frac{m}{\rho}\}$, and $x$  satisfies Property {\rm(A)}
for  $x\in\mathfrak{B}(\overline{x};\frac{\eta}{2},\frac{\nu}{N})$ with $N\geq\frac{2\overline{\epsilon}\nu}{m-\overline{\epsilon}L}/\left(\frac{\eta}{2}\right)^2$.
\item[{\rm(ii)}] $\partial_P F$ is weakly metric-subregular at $\overline{x}$ for the zero vector $0$;
\end{itemize}
then  the level-set subdifferential error bound holds at $\overline{x}$ with
$\gamma=1$.
\end{theorem}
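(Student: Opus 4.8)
The point that links the target set $\overline{\mathbf{X}}_P$ to the level set $[F\le\overline{F}]$ is Assumption {\bf(H$_3$)}, and I would exploit it first. Since $\overline{x}\in\overline{\mathbf{X}}_P$ and $\overline{F}=F(\overline{x})$, Assumption {\bf(H$_3$)} gives $\overline{\mathbf{X}}_P\cap\mathbb{B}(\overline{x};\delta)\subseteq[F\le\overline{F}]$. Hence, for any $x\in\mathbb{B}(\overline{x};\delta/2)$ and any $\tau>0$ small enough, a point $y\in\overline{\mathbf{X}}_P$ with $\|x-y\|\le dist(x,\overline{\mathbf{X}}_P)+\tau$ satisfies $\|y-\overline{x}\|\le\|y-x\|+\|x-\overline{x}\|<\delta$, so $y\in[F\le\overline{F}]$; letting $\tau\downarrow0$ yields
\begin{equation}\label{eq:H3reduction}
dist\big(x,[F\le\overline{F}]\big)\le dist\big(x,\overline{\mathbf{X}}_P\big),\qquad\forall\,x\in\mathbb{B}(\overline{x};\delta/2).
\end{equation}
Thus it suffices to produce, on a small $\mathfrak{B}(\overline{x};\eta',\nu')$, a bound $dist(x,\overline{\mathbf{X}}_P)\le c\,dist(0,\partial_PF(x))$ with $c>0$; this is precisely a first type error bound with target set $\overline{\mathbf{X}}_P$, and each of the two hypotheses delivers one.

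Under hypothesis (ii), weak metric-subregularity of $\partial_PF$ at $\overline{x}$ for $0$ furnishes $\eta,\nu,c_5>0$ with $c_5\,dist(x,\overline{\mathbf{X}}_P)\le dist(0,\partial_PF(x))$ on $\mathfrak{B}(\overline{x};\eta,\nu)$. Combining with~\eqref{eq:H3reduction} on $\mathfrak{B}\big(\overline{x};\min\{\eta,\delta/2\},\nu\big)$ gives the level-set subdifferential error bound at $\overline{x}$ with $\gamma=1$ and $c_3=1/c_5$.

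Under hypothesis (i), I would chain the Bregman proximal error bound with Proposition~\ref{prop:Gk}(iv). The Bregman proximal error bound provides $\eta,\nu,c_4>0$ with $dist(x,\overline{\mathbf{X}}_P)\le c_4\,dist(x,T_{D,\epsilon}(x))$ on $\mathfrak{B}(\overline{x};\eta,\nu)$. If $g$ is semiconvex on $\RR^n$ with modulus $\rho$ and $\overline{\epsilon}<\min\{m/L,m/\rho\}$, then $T_{D,\epsilon}$ is single-valued by Proposition~\ref{prop:singlevalue} and Proposition~\ref{prop:Gk}(iv) gives $dist(x,T_{D,\epsilon}(x))=\|x-T_{D,\epsilon}(x)\|\le\frac{\overline{\epsilon}}{m-\overline{\epsilon}\rho}\,dist(0,\partial_PF(x))$ for all $x$. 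If instead $g$ is only uniformly prox-regular around $\overline{x}$ with $\rho,\eta$, I would shrink to $\mathfrak{B}(\overline{x};\eta/2,\nu/N)$ with $N\ge\frac{2\overline{\epsilon}\nu}{m-\overline{\epsilon}L}/(\eta/2)^2$ and invoke Property (A) together with Lemma~\ref{lemma:tp} (exactly as in Remark~\ref{remark2.2}) to force $x$ and every $t_{D,\epsilon}(x)\in T_{D,\epsilon}(x)$ into $\mathbb{B}(\overline{x};\eta)$, so that Propositions~\ref{prop:singlevalue} and~\ref{prop:Gk} remain valid there and the same estimate $\|x-T_{D,\epsilon}(x)\|\le\frac{\overline{\epsilon}}{m-\overline{\epsilon}\rho}\,dist(0,\partial_PF(x))$ holds. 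In either case, composing this with the Bregman proximal error bound and then with~\eqref{eq:H3reduction} on the intersection of all neighborhoods involved yields $dist(x,[F\le\overline{F}])\le\frac{c_4\overline{\epsilon}}{m-\overline{\epsilon}\rho}\,dist(0,\partial_PF(x))$, i.e.\ the level-set subdifferential error bound at $\overline{x}$ with $\gamma=1$.

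The genuine content I expect to be~\eqref{eq:H3reduction} and, in case (i), the localization step: arranging that on a single small $\mathfrak{B}(\overline{x};\eta',\nu')$ all of Assumption {\bf(H$_3$)}, the Bregman proximal error bound, Property (A)/Lemma~\ref{lemma:tp}, and the semiconvexity (or uniform prox-regularity) estimates of Propositions~\ref{prop:singlevalue}--\ref{prop:Gk} can be applied at once. Once that is set up, the remainder is just composing linear inequalities, which is why the exponent comes out to $\gamma=1$; the bookkeeping of which $\eta$, $\nu$, $N$ to retain — namely the minima of those supplied by the several hypotheses — is routine but must be handled carefully.
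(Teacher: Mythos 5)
Your proposal is correct and follows essentially the same route as the paper: both reduce $dist(x,[F\le\overline{F}])$ to $dist(x,\overline{\mathbf{X}}_P)$ by using Assumption {\bf(H$_3$)} together with the observation that near-optimal points of $\overline{\mathbf{X}}_P$ for $x$ close to $\overline{x}$ must lie in $\mathbb{B}(\overline{x};\delta)$, and then compose this with weak metric-subregularity in case (ii), or with the Bregman proximal error bound plus Proposition~\ref{prop:Gk}(iv) (justified locally via Property (A), Lemma~\ref{lemma:tp}, and Remark~\ref{remark2.2} in the prox-regular case) in case (i). Your $\tau$-approximate-projection argument for the reduction is just a slight variant of the paper's exact-projection/partition argument, so no substantive difference.
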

\begin{proof}
First noted that $\overline{\mathbf{X}}_P=\{\overline{\mathbf{X}}_P\cap\mathfrak{B}(\overline{x};\eta,\nu)\}\cup\{\overline{\mathbf{X}}_P\setminus \mathfrak{B}(\overline{x};\eta,\nu)\}$. For given $x\in\mathfrak{B}(\overline{x};\frac{\eta}{2},\nu)$, we have $dist\left(x,\overline{\mathbf{X}}_P\cap\mathfrak{B}(\overline{x};\eta,\nu)\right)\leq\frac{\eta}{2}$ and $dist\left(x,\overline{\mathbf{X}}_P\setminus\mathfrak{B}(\overline{x};\eta,\nu)\right)>\frac{\eta}{2}$. Let $x_p={Proj}_{\overline{X}_P}(x)$, then we must have $x_p\in\overline{\mathbf{X}}_P\cap\mathfrak{B}(\overline{x};\eta,\nu)$ and $dist\left(x,\overline{\mathbf{X}}_P\right)=dist\left(x,\overline{\mathbf{X}}_P\cap\mathfrak{B}(\overline{x};\eta,\nu)\right)$.\\
{\rm(i):} By Assumption {\bf(H$_3$)}, for $\eta\leq\delta$, we have that $\overline{\mathbf{X}}_P\cap\mathfrak{B}(\overline{x};\eta,\nu)\subset[F\leq\overline{F}]$. Therefore, for $x\in\mathfrak{B}(\overline{x};\frac{\eta}{2},\nu)\subset\mathfrak{B}(\overline{x};\eta,\nu)$, we conclude
\begin{eqnarray}\label{eq:71}
dist\left(x,[F\leq\overline{F}]\right)&\leq& dist\left(x,\overline{\mathbf{X}}_P\cap\mathfrak{B}(\overline{x};\eta,\nu)\right)\nonumber\\
&=&dist(x,\overline{\mathbf{X}}_P)\nonumber\\
&\leq&c_4 dist\big{(}x,T_{D,\epsilon}(x)\big{)}\\
&&\qquad\mbox{(by the definition of Bregman proximal error bound~\eqref{Bregman proximal error bound})}\nonumber.
\end{eqnarray}
If $g$ is semi-convex, Proposition~\ref{prop:Gk} is applicable.  For the case where $g$ is uniformly prox-regular around $\overline{x}$ with $\rho$, $\eta$, $\overline{\epsilon}<\min\{\frac{m}{L},\frac{m}{\rho}\}$ and $x\in\mathfrak{B}(\overline{x};\frac{\eta}{2},\frac{\nu}{N})$ with $N\geq\frac{2\overline{\epsilon}\nu}{m-\overline{\epsilon}L}/\left(\frac{\eta}{2}\right)^2$ satisfying Property (A), $T_{D,\epsilon}(x)$ is single valued and the statement (iv) of Proposition~\ref{prop:Gk} is still valid on $\mathbb{B}(\overline{x};\eta)$. Moreover,  we  have $dist\big{(}x,T_{D,\epsilon}(x)\big{)}\leq\frac{\overline{\epsilon}}{(m-\overline{\epsilon}\rho)}dist \left(0,\partial_P F(x)\right)$. By~\eqref{eq:71}, the level-set subdifferential error bound holds at $\overline{x}\in\overline{\mathbf{X}}_P$.\\
{\rm(ii):} By Assumption {\bf(H$_3$)}, for $\eta\leq\delta$, we have that $\overline{\mathbf{X}}_P\cap\mathbb{B}(\overline{x};\eta)\subset[F\leq\overline{F}]$. For $x\in\mathfrak{B}(\overline{x};\frac{\eta}{2},\nu)$, since $\partial_P F$ satisfies weak metric subregularity, we have
\begin{eqnarray}
dist\big{(}x,\partial_PF(x)\big{)}&\geq& c_5dist\big{(}x, \overline{\mathbf{X}}_P\big{)}\nonumber\\
&=&c_5dist\left(x,\overline{\mathbf{X}}_P\cap\mathfrak{B}(\overline{x};\eta,\nu)\right)\nonumber\\
&\geq& c_5dist\left(x,[F\leq\overline{F}]\right),
\end{eqnarray}
which yields the desired result.
\end{proof}
\subsubsection{Convergence analysis of various algorithms under first type error bounds for  linear convergence}
{\bf Application 7.1: Linear convergence of PG method for fully nonconvex problem (P)}\\
Very recently, Wang et. al.~\cite{Ye18} develop the perturbation technique to conducting the linear convergence of the PG method under the calmness condition along with the proper separation of stationary value condition {\bf(H$_3$)} for fully nonconvex problem (P). From Theorem~\ref{theo1}, we see that linear convergence of PG is in fact guaranteed by the level-set subdifferential error bound condition which is weaker than the calmness condition don't required Assumption~{\bf(H$_3$)}.
Moreover, if $g$ is semi-convex, the strong level-set subdifferential error bound on $[\overline{F}<F<\overline{F}+\nu]$ is necessary and sufficient for linear convergence in sense~\eqref{x-Q-linear}.\\

{\bf Application 7.2: Linear convergence of regularized Jaccobi method}\\
In many big data applications, the regularizer $g$ in problem (P) may have block separable structures, i.e., $g(x)=\sum\limits_{i=1}^{N}g_i(x_i)$, $x_i\in\RR_n^i$. In this setting, (P) can be specified as
\begin{equation}\label{separable}
\min_{x\in\RR^n} f(x_1,...,x_n)+\sum_{i=1}^{N}g_i(x_i)
\end{equation}
If we take $K^k(x)=\sum\limits_{i=1}^{N}f\left(R_i^k(x)\right)+\frac{c_i}{2}\|x_i-x_i^k\|^2$ and $D^k(x,y)=K^k(y)-\left[K^k(x)+\langle\nabla K^k(x),y-x\rangle\right]$, where $R_i^k\triangleq(x_1^k,...,x_{i-1}^k,x_i,x_{i+1}^k,...,x_n^k)$. Thus VBPG become a regularized Jaccobi algorithm. Recently, G. Bajac \cite{Banjac18} provided the linear convergence of regularized Jaccobi algorithm under quadratic growth condition for full convex problem~\eqref{separable}. From the results of in the convex setting quadratic growth condition is equivalent to Bregman proximal error bound, metric subregularity and KL property with exponent $\frac{1}{2}$, see \cite{Lewis2018} and \cite{YeJ18} for more details.\\
By Theorem~\ref{theo1},  for full nonconvex problem~\eqref{separable}, the VBPG method provides the linear convergence under the  level-set subdifferential error bound condition at the point $\overline{x}\in\overline{\mathbf{X}}_L$. For the convex problem~\eqref{separable}, together with Theorem~\ref{theo:n-s}, we can show that the quadratic growth condition is also necessary for linear convergence in the sense of~\eqref{x-Q-linear-2}.
\subsection{Second type error bounds with target value $F(\overline{x})$ }\label{subsec:error bounds of function}
 Second type error bounds are used to bound the absolute difference  of  any function value $F$  at $\overline{x}\in\overline{\mathbf{X}}_P$ from a test set to  the value $\overline{F}=F(\overline{x})$ by a residual function $R_2$. Specifically we study if there exist some $\gamma_2$, $\delta_2$ such that
$$R_2(x)\geq\delta_2\left(F(x)-\overline{F}\right)^{\gamma_2},\quad\forall x\in\mathfrak{B}(\overline{x};\eta,\nu).$$
An expression of this kind is called a second type error bound of $F$ for problem (P).
\subsubsection{ Connections of important second type error bounds and  level-set based error bounds}
\begin{definition}[Kurdyka-{\L}ojasiewicz property] The proper lower semicontinuous function $F$ is said to satisfy the Kurdyka-{\L}ojasiewicz  (KL) property at $\overline{x}$ with exponent $\alpha\in(0,1)$, if there exist $\nu>0$, $\eta>0$, and $c_1>0$ such that the following inequality holds:
\begin{equation}\label{eq:KL_1}
dist(0,\partial_LF(x))\geq c_1[F(x)-\overline{F}]^{\alpha}~~~\forall x\in\mathfrak{B}(\overline{x};\eta,\nu).
\end{equation}
\end{definition}
\begin{definition}[Level-set sharpness]  A proper lower semicontinuous function $\psi$ is said to satisfy the level-set sharpness at $\overline{x}$ with exponent $\beta>0$, if there exist $\nu>0$,$\eta>0$, and $c_2>0$ such that the following inequality holds:
$$dist(x,[F \leq \overline{F}])\leq c_2\big{(}F(x)-\overline{F}\big{)}^{\beta}~~~\forall x\in\mathfrak{B}
(\overline{x};\eta,\nu).$$
\end{definition}

Generally speaking, the KL property is the strongest property that implies all others.
\begin{theorem}[ Level-set subdifferential EB under KL property]\label{kl}
 Let $F$ be a proper lower semicontinuous function on $R^n$. Suppose that
$F$ satisfies the KL property  at $\bar x$ with  exponent  $\alpha\in(0,1)$ over $\mathfrak{B}(\overline{x};\eta,\nu)$. Then
\begin{itemize}
\item[\rm{(a)}] The function $F$ is level-set sharpness at $\overline{x}$ with $\beta=1-\alpha$  over $x\in\mathfrak{B}(\overline{x};\frac{\eta}{2},\nu)$.
\item[\rm{(b)}] Moreover, $F$ also satisfies the level-set subdifferential error bound at $\overline{x}$ with $\gamma=\frac{\alpha}{1-\alpha}$ over $x\in\mathfrak{B}(\overline{x};\frac{\eta}{2},\nu)$.
So $\alpha=\frac{\gamma}{1+\gamma}$. As a consequence,
 $\alpha\in(0,1/2]$ if and only if $\gamma\in(0,1]$, and $\alpha\in (\frac{1}{2},1)$ if and only if $\gamma>1$.
\end{itemize}
\end{theorem}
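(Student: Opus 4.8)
The plan is to prove (a) first — the implication from the KL inequality to the level-set sharpness bound $dist(x,[F\le\overline{F}])\le c_2(F(x)-\overline{F})^{1-\alpha}$ — and then to read off (b) by substituting (a) back into the KL inequality, together with the bookkeeping identity $\alpha=\gamma/(1+\gamma)$. To set up (a) I would first put the hypothesis in desingularizing form: with $\varphi(s)=\frac{1}{c_1(1-\alpha)}s^{1-\alpha}$ on $[0,\nu)$ one has $\varphi(0)=0$, $\varphi$ concave and strictly increasing, and $\varphi'(s)=\frac{1}{c_1}s^{-\alpha}$, so the KL estimate $dist(0,\partial_L F(x))\ge c_1(F(x)-\overline{F})^{\alpha}$ is exactly $\varphi'(F(x)-\overline{F})\,dist(0,\partial_L F(x))\ge 1$ on $\mathfrak{B}(\overline{x};\eta,\nu)$. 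Since the hypothesis only asserts the existence of \emph{some} $\eta,\nu$, I would also shrink $\nu$ at the outset so that $\varphi(\nu)\le\eta/2$, which makes the localization below automatic.

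\emph{Descent into the sublevel set.} Fix $x\in\mathfrak{B}(\overline{x};\frac{\eta}{2},\nu)$, so $\overline{F}<F(x)<\overline{F}+\nu$ and $\|x-\overline{x}\|<\eta/2$. Following the by-now standard route linking the KL inequality to sublevel-set error bounds, I would run a curve of maximal slope $t\mapsto\gamma(t)$ for $F$ started at $\gamma(0)=x$ (or, equivalently, a discretized version built from iterated subgradient / Ekeland-variational-principle steps, which avoids curves altogether). Along such a path $t\mapsto F(\gamma(t))$ is nonincreasing, and applying the chain rule to $t\mapsto\varphi(F(\gamma(t))-\overline{F})$ together with the KL inequality $\varphi'(F-\overline{F})\,dist(0,\partial_L F)\ge 1$ gives the length estimate $\mathrm{length}(\gamma|_{[0,t]})\le\varphi(F(x)-\overline{F})-\varphi(F(\gamma(t))-\overline{F})\le\varphi(F(x)-\overline{F})$. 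In particular every point of the path stays in $\mathbb{B}(\overline{x};\tfrac{\eta}{2}+\varphi(F(x)-\overline{F}))\subseteq\mathbb{B}(\overline{x};\eta)$ with $F$-value in $[\overline{F},\overline{F}+\nu)$, so the KL inequality is available all along it — this closure property is what makes the estimate legitimate. Letting the path run to its limit point (it has finite length, hence converges), lower semicontinuity of $F$ together with the KL inequality at that limit forces the limit to lie in $[F\le\overline{F}]$, whence
\[
dist(x,[F\le\overline{F}])\ \le\ \mathrm{length}(\gamma)\ \le\ \varphi(F(x)-\overline{F})\ =\ \tfrac{1}{c_1(1-\alpha)}\,(F(x)-\overline{F})^{1-\alpha},
\]
which is (a) with $\beta=1-\alpha$ and $c_2=\frac{1}{c_1(1-\alpha)}$.

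\emph{From (a) to (b).} Given (a), for $x\in\mathfrak{B}(\overline{x};\frac{\eta}{2},\nu)$ with $F(x)>\overline{F}$ we have $(F(x)-\overline{F})^{1-\alpha}\ge c_2^{-1}\,dist(x,[F\le\overline{F}])$, hence $(F(x)-\overline{F})^{\alpha}\ge c_2^{-\alpha/(1-\alpha)}\,dist(x,[F\le\overline{F}])^{\alpha/(1-\alpha)}$; multiplying by $c_1$ and invoking KL gives $dist(0,\partial_L F(x))\ge c_1 c_2^{-\alpha/(1-\alpha)}\,dist(x,[F\le\overline{F}])^{\alpha/(1-\alpha)}$. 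Since $\partial_P F(x)\subseteq\partial_L F(x)$ forces $dist(0,\partial_P F(x))\ge dist(0,\partial_L F(x))$, this rearranges to $dist^{\gamma}(x,[F\le\overline{F}])\le c_3\,dist(0,\partial_P F(x))$ with $\gamma=\frac{\alpha}{1-\alpha}$ and $c_3=c_2^{\gamma}/c_1$, i.e. the level-set subdifferential error bound of Definition~3.3 holds at $\overline{x}$ with exponent $\gamma$. Inverting $\gamma=\alpha/(1-\alpha)$ yields $\alpha=\gamma/(1+\gamma)$, and since $t\mapsto t/(1+t)$ is increasing on $(0,\infty)$ with value $1/2$ at $t=1$, the stated equivalences $\alpha\in(0,1/2]\Leftrightarrow\gamma\in(0,1]$ and $\alpha\in(1/2,1)\Leftrightarrow\gamma>1$ follow at once.

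\emph{Main obstacle.} The bookkeeping in the first and third paragraphs is routine; the real content, and the delicate point, is the descent step. Because $F$ is assumed only proper lsc — no smoothness, no convexity, and $\partial_L F(x)$ may even be empty at some $x$ — one cannot literally take a gradient step, so the descent path must be produced through the metric-slope / curve-of-maximal-slope machinery (or an iterated Ekeland construction), and one must verify it never leaves $\mathfrak{B}(\overline{x};\eta,\nu)$ so that the KL inequality keeps applying. Controlling this localization is precisely why $\nu$ may need to be shrunk first and why the conclusion is stated on the half-ball $\mathfrak{B}(\overline{x};\frac{\eta}{2},\nu)$ rather than on the full ball.
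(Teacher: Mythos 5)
Your part (b) is exactly the paper's own argument: combine the sharpness estimate from (a) with the KL inequality, raise to the appropriate power, and pass from $\partial_L F$ to $\partial_P F$ via $\partial_P F(x)\subseteq\partial_L F(x)$, which gives $dist^{\gamma}(x,[F\leq\overline{F}])\leq c_3\,dist(0,\partial_P F(x))$ with $\gamma=\frac{\alpha}{1-\alpha}$; the exponent bookkeeping is the same. The divergence is in (a): the paper does not prove it at all — it simply invokes Theorem~4.1 of Az\'e and Corvellec \cite{Aze2017} (nonlinear error bounds via a change of function) — whereas you sketch a direct proof by desingularizing with $\varphi(s)=\frac{1}{c_1(1-\alpha)}s^{1-\alpha}$ and following a descent path. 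That is a more self-contained route, and it is essentially how the cited result is proved, but the part you defer is precisely the part the citation covers, and your curve formulation has a genuine soft spot beyond mere technicality. First, for a function that is only proper and lsc, a curve of maximal slope starting at $x$ need not exist, so the path itself must be replaced by the discretized Ekeland construction you mention in passing. Second, and more importantly, the length/dissipation estimate along such a curve is governed by the strong slope $|\nabla F|$, and the KL property as defined in the paper is stated through $dist(0,\partial_L F(x))$; these two quantities are not comparable pointwise in the direction you need (one only has $|\nabla F|(x)\leq dist(0,\partial_F F(x))$, while $dist(0,\partial_L F(x))$ can be strictly smaller than the slope), so the inequality $\varphi'(F(\gamma(t))-\overline{F})\,|\nabla F|(\gamma(t))\geq 1$ used in your chain-rule computation does not follow from the stated hypothesis. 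The repair is the Ekeland/metric-critical-point argument: if the error bound failed, Ekeland's variational principle plus the finite-dimensional sum rule would produce a point in the slice with a small Fr\'echet subgradient of $\varphi\circ(F-\overline{F})$, hence a small limiting subgradient, contradicting KL. Writing that out carefully is exactly the content of the theorem the paper cites, so your proposal should either cite it as the paper does or carry out the Ekeland version explicitly; the curve version as sketched is not yet a proof.

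Two minor points. Shrinking $\nu$ so that $\varphi(\nu)\leq\frac{\eta}{2}$ yields the conclusion on a possibly smaller slice than the literal $\mathfrak{B}(\overline{x};\frac{\eta}{2},\nu)$ in the statement; this is harmless because the level-set sharpness and level-set subdifferential error bound definitions only require the existence of some $\eta,\nu$, but it is worth saying. Also, points where $\partial_L F(x)=\emptyset$ cause no difficulty: by the paper's convention $dist(0,\emptyset)=\infty$, so the KL inequality holds vacuously there and no special treatment is needed.
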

\begin{proof}
\begin{itemize}
\item[\rm{(a)}] See Theorem 4.1 of Az\'e and Corvellec with the name nonlinear local error bound for level-set sharpness condition~\cite{Aze2017}.
\item[\rm{(b)}] From (a), there is some $c_2>0$ such that for $x\in\mathfrak{B}(\overline{x};\frac{\eta}{2},\nu)$ we have
$$dist(x,[F\leq \overline{F}])\leq c_2\big{(}F(x)-\overline{F}\big{)}^{1-\alpha}.$$
Then as the KL  property holds at $\bar x$ with exponent $\alpha\in (0,1)$, there is some $c_1>0$ such that
\begin{eqnarray*}
dist(x,[F\leq \overline{F}])&\leq&c_2\frac{1}{c_{1}^{\frac{1-\alpha}{\alpha}}}dist^{\left(\frac{1-\alpha}{\alpha}\right)}\big{(}0,\partial_L F(x)\big{)}\\
&=&\frac{c_1c_2}{{c_1}^{\frac{1}{\alpha}}}dist^{\left(\frac{1-\alpha}{\alpha}\right)}\big{(}0,\partial_L F(x)\big{)}.
\end{eqnarray*}
This yields $\gamma=\frac{\alpha}{1-\alpha}$.
Since $\partial_PF(x)\subseteq\partial_LF(x)$, the claim is proved. The last part follows easily with some simple computations.
\end{itemize}
\end{proof}

The following proposition reveals that the notion of level-set subdifferential EB condition is actually weaker than that of KL property.
\begin{proposition}
For any real number $\overline{x}$,
there is a lower-semi continuous function $\psi:R\rightarrow R$ such that if $\mathfrak{B}(\overline{x};\eta,\nu)\not=\emptyset$ with $\eta>0$ and $\nu>0$, then for $\overline{\psi}=\psi(\overline{x})$,
the level-set subdifferential EB condition holds at $\overline{x}$, but KL property fails at $\overline{x}$.
\end{proposition}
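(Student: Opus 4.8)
The plan is to construct, for an arbitrary $\overline{x}\in\RR$ (by translation it suffices to take $\overline{x}=0$), an explicit bounded lower semicontinuous function $\psi:\RR\to\RR$ built as a sequence of thin increasing ramps accumulating at $0$. Fix for $n\ge 2$ the numbers $b_n=1/n$, $w_n=1/n^{3}$, $a_n=b_n-w_n$, $h_n=1/n$, $s_n=1/n^{2}$; I would set $\psi\equiv 0$ off $\bigcup_{n\ge 2}(a_n,b_n)$ and $\psi(x)=h_n+s_n(x-a_n)$ on each open ramp $(a_n,b_n)$. A short check ($n^{2}>n+1$ for $n\ge2$) shows the ramps are pairwise disjoint and ordered $0<\dots<a_{n+1}<b_{n+1}<a_n<b_n<\dots<\tfrac12$, so $\psi$ is well defined, satisfies $0\le\psi\le 1$, and vanishes at every point of discontinuity (the ramp endpoints, the valleys between ramps, the half-line $(-\infty,0]$, and $0$). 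Since $\psi\ge 0$ and $\psi=0$ at all those points, $\psi$ is lower semicontinuous on $\RR$; moreover $\overline{\psi}:=\psi(0)=0$, and $[\psi\le\overline{\psi}]=\RR\setminus\bigcup_{n\ge2}(a_n,b_n)$, which contains every $a_n$ and $b_n$. Finally, every $\mathfrak{B}(0;\eta,\nu)$ is nonempty, because for all large $n$ the whole ramp $(a_n,b_n)$ lies in it ($b_n\to0$ and $\sup_{(a_n,b_n)}\psi=h_n+s_nw_n\to0$), so the hypothesis ``$\mathfrak{B}(\overline{x};\eta,\nu)\neq\emptyset$'' is automatically in force.

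Next I would record the one local computation that is needed. On the interior of a ramp $(a_n,b_n)$ the function $\psi$ coincides with an affine map of slope $s_n$, so $\partial_P\psi(x)=\partial_L\psi(x)=\{s_n\}$ and hence $\mathrm{dist}(0,\partial_P\psi(x))=\mathrm{dist}(0,\partial_L\psi(x))=s_n$; and since every point of $[\psi\le\overline{\psi}]$ outside $(a_n,b_n)$ is at least as far from $x$ as the nearer ramp endpoint, $\mathrm{dist}(x,[\psi\le\overline{\psi}])=\min\{x-a_n,\,b_n-x\}\le w_n/2$. Because the ramp endpoints, the valleys and the half-line $(-\infty,0]$ all carry the value $\overline{\psi}$, none of them lies in any $\mathfrak{B}(0;\eta,\nu)$, so these two identities describe the situation at every point of every $\mathfrak{B}(0;\eta,\nu)$.

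The two assertions then follow directly. For the level-set subdifferential error bound I would take exponent $\gamma=1$: any $x\in\mathfrak{B}(0;\eta,\nu)$ sits in some ramp $n\ge2$, and $\mathrm{dist}(x,[\psi\le\overline{\psi}])\le w_n/2=\frac{1}{2n}\,s_n\le\frac14\,s_n=\frac14\,\mathrm{dist}(0,\partial_P\psi(x))$, so the bound holds at $0$ with $\gamma=1$, $c_3=\frac14$ and arbitrary $\eta,\nu>0$. For the failure of the KL property, fix any exponent $\alpha\in(0,1)$, any $c_1>0$ and any $\eta',\nu'>0$, and pick $n$ so large that $(a_n,b_n)\subset\mathfrak{B}(0;\eta',\nu')$ and $n^{2-\alpha}>1/c_1$. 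For $x$ in this ramp, $\psi(x)-\overline{\psi}\ge h_n=1/n$ whereas $\mathrm{dist}(0,\partial_L\psi(x))=s_n=1/n^{2}=n^{\alpha-2}\cdot n^{-\alpha}<c_1(1/n)^{\alpha}\le c_1\big(\psi(x)-\overline{\psi}\big)^{\alpha}$, so the KL inequality is violated inside $\mathfrak{B}(0;\eta',\nu')$. As $\alpha,c_1,\eta',\nu'$ were arbitrary and each $\mathfrak{B}(0;\eta',\nu')$ is nonempty, $\psi$ satisfies no KL inequality at $0$.

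The construction is not delicate once the exponents are in place; the point I expect to be the real obstacle — and the one I would be most careful about — is the need to decouple ``height'' $h_n$ from ``slope'' $s_n$. A continuous bump rising to height $h_n$ over width $w_n$ is forced to have slope of order $h_n/w_n$ near its peak, and the resulting pair of demands (slope $\gtrsim w_n$ so that the level-set bound survives, yet slope $\ll h_n^{\alpha}$ for every $\alpha<1$ so that KL is destroyed) are mutually incompatible. Breaking continuity — letting each ramp drop back to $0$ at its right endpoint $b_n$ — severs this link and lets $s_n$ be chosen polynomially below $h_n$ while still dominating $w_n$. I would therefore verify carefully that lower semicontinuity is not lost at the downward jumps $b_n$ (it is not, since $\psi\ge0$ and $\psi(b_n)=0$) and that $\partial_P\psi$ on a ramp interior is exactly the singleton $\{s_n\}$ and not a larger set, before assembling the argument above.
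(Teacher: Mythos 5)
Your construction is correct, but it is a genuinely different route from the paper's. The paper's proof is a two-line example: it takes $\psi(x)=\tfrac12(x-\overline{x})^2$ for $x\neq\overline{x}$ and $\psi(\overline{x})=-1$, so that $[\psi\le\overline{\psi}]=\{\overline{x}\}$ and, for every $x\in\mathfrak{B}(\overline{x};\eta,\nu)$, $dist(x,[\psi\le\overline{\psi}])=|x-\overline{x}|=dist\big{(}0,\partial_P\psi(x)\big{)}$, giving the level-set subdifferential EB with $\gamma=1$, $c_3=1$; KL fails because the value gap satisfies $\psi(x)-\overline{\psi}\geq 1$ on $\mathfrak{B}(\overline{x};\eta,\nu)$ while $dist(0,\partial_L\psi(x))=|x-\overline{x}|\rightarrow 0$. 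In other words, the paper kills KL in the cheapest possible way, by dropping the value at the reference point itself so that the gap $F(x)-\overline{F}$ is bounded away from zero nearby. Your ramp construction instead keeps $\psi$ continuous at $\overline{x}$ with $\psi(x)-\overline{\psi}\rightarrow 0$ along the test points, and defeats KL by an asymptotic mismatch of scales ($s_n=n^{-2}$ versus $h_n^{\alpha}=n^{-\alpha}$), while the EB survives because the width $w_n=n^{-3}$ decays faster than the slope; your key computations (disjointness of the ramps for $n\geq2$, lower semicontinuity at the endpoints, $\partial_P\psi=\partial_L\psi=\{s_n\}$ on ramp interiors, $dist(x,[\psi\le\overline{\psi}])\leq w_n/2\leq\tfrac14 s_n$, and $s_n<c_1h_n^{\alpha}$ for large $n$) all check out. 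What each buys: the paper's example is far shorter and suffices for the stated proposition; yours is more work but demonstrates the stronger point that the gap between the two notions is not merely an artifact of an isolated downward value jump at $\overline{x}$ — it persists even when $\psi$ is continuous at $\overline{x}$ and the function values of the test points converge to $\overline{\psi}$.
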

\begin{proof} For a given real number $\overline{x}$, let $\psi:R\rightarrow R$ be given by $\psi (x)=1/2(x-\overline{x})^2$ if $x\not=\overline{x}$ and $\psi(x)=-1$ if $x=\overline{x}$.
Then $\partial_P \psi (x)=\{(x-\overline{x})\}$ if $x\not=\overline{x}$. Suppose
that $\mathfrak{B}(\overline{x};\eta,\nu)\not=\emptyset$ with $\eta>0$ and $\nu>0$. It follows that $dist(x,[\psi\leq \overline{\psi}])=|x-\overline{x}|\leq
dist(0,\partial_P(x))$ for all $x\in \mathfrak{B}(\overline{x};\eta,\nu)$. So the level-set subdifferential EB condition holds at $\overline{x}$ with
$\gamma=1$ and $c_3=1$ by Definition 3.1. However $\psi (x)-\psi (\overline{x})\geq 1$ for any $x\in \mathfrak{B}(\overline{x};\eta,\nu)$, and $dist (0,\partial_L\psi(x))=|x-\overline{x}|\rightarrow 0$ as $|x-\overline{x}|\rightarrow 0$.
Hence KL property fails at $\overline{x}$ for any $\alpha\in (0,1)$ (see Definition 7.3).
\end{proof}
\begin{definition}[Bregman proximal gap condition]
Given  a Bregman function  $D$ along with  $\epsilon>0$, we say that  the function $F$  satisfies the
Bregman proximal  (BP) gap condition relative to $D$ and $\epsilon$  at $\overline{x}\in{\rm dom}F$ with exponent $q\in[0,2)$ if there exist $\nu>0$, $\eta>0$, and $\mu>0$ such that the following inequality holds:
$$G_{D,\epsilon}(x)\geq\mu\big{(}F(x)-\overline{F}\big{)}^q~~~\quad\forall x\in\mathfrak{B}(\overline{x};\eta,\nu),$$
where $G_{D,\epsilon}(x)=-\frac{1}{\epsilon}\min\limits_{y\in\RR^n}\big{\{}\langle\nabla f(x),y-x\rangle+g(y)-g(x)+\frac{1}{\epsilon}D(x,y)\big{\}}$.
\end{definition}
Under the assumption of uniform prox-regularity of $g$ at $\overline{x}$, we have the following theorem, which gives
an answer to  the converse of Theorem~\ref{kl}.
\begin{theorem} \label{theo:4.3}{\bf(BP gap condition, level-set Bregman EB and KL property)}
Suppose that Assumption 1 holds.
For a given  Bregman function $D$ along with $\epsilon>0$ satisfying Assumption~\ref{assump2}, $g$ is semi-convex or $g$ is uniformly prox-regular around $\overline{x}$ with $\rho$, $\eta$, $\overline{\epsilon}<\min\{\frac{m}{L},\frac{m}{\rho}\}$ and  $x\in\mathfrak{B}(\overline{x};\frac{\eta}{2},\frac{\nu}{N})$ with $N\geq\frac{2\overline{\epsilon}\nu}{m-\overline{\epsilon}L}/\left(\frac{\eta}{2}\right)^2$ satisfies Property (A):
\begin{itemize}
\item[\rm{(i)}] If $F$ satisfies level-set Bregman error bound holds at $\overline{x}$ with exponent $p$ over $\mathfrak{B}(\overline{x};\eta,\nu)$, then BP gap condition holds at $\overline{x}$  with exponent $q=\frac{1}{\min\{\frac{1}{p},1\}}$
over $\mathfrak{B}(\overline{x};\frac{\eta}{2},\frac{\nu}{N})$ with $N\geq\frac{2\overline{\epsilon}\nu}{m-\overline{\epsilon}L}/\left(\frac{\eta}{2}\right)^2$.
\item[\rm{(ii)}] If $F$ satisfies BP gap condition at $\overline{x}$ with exponent $q$ over $\mathfrak{B}(\overline{x};\eta,\nu)$, then function $F$ has the KL property at $\overline{x}$ with exponent of $\frac{q}{2}$ over $\mathfrak{B}(\overline{x};\eta,\nu)$ .
\end{itemize}
\end{theorem}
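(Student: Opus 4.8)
The plan is to prove the two implications by chaining the estimates already established for the Bregman proximal gap function $G_{D,\epsilon}$, the envelope $E_{D,\epsilon}$, and the (now single‑valued) mapping $T_{D,\epsilon}$. Throughout, the standing hypotheses — semiconvexity of $g$, or uniform prox‑regularity of $g$ around $\overline{x}$ together with $\overline{\epsilon}<\min\{m/L,m/\rho\}$ and Property~(A) on $\mathfrak{B}(\overline{x};\frac{\eta}{2},\frac{\nu}{N})$ — are precisely what makes $T_{D,\epsilon}$ single‑valued (Proposition~\ref{prop:singlevalue}) and the inequalities of Proposition~\ref{prop:Gk} available: in the semiconvex case they hold everywhere, and in the uniformly prox‑regular case Lemma~\ref{lemma:tp} puts $t_{D,\epsilon}(x)$ back inside $\mathbb{B}(\overline{x};\eta)$, so by Remark~\ref{remark2.2} Propositions~\ref{prop:singlevalue} and~\ref{prop:Gk} remain valid there. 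I will also enlarge $N$, if needed, so that $N\ge \nu/\underline{\epsilon}$ in addition to $N\ge \frac{2\overline{\epsilon}\nu}{m-\overline{\epsilon}L}/(\frac{\eta}{2})^2$.

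For part (i), first note that on $\mathfrak{B}(\overline{x};\frac{\eta}{2},\frac{\nu}{N})$ one has $\overline{F}<F(x)<\overline{F}+\frac{\nu}{N}$; since $\epsilon G_{D,\epsilon}(x)=F(x)-E_{D,\epsilon}(x)$ with $E_{D,\epsilon}(x)\ge F\big(t_{D,\epsilon}(x)\big)\ge\overline{F}$ (Property~(A) together with Proposition~\ref{prop:Ek}(iii)), it follows that $0\le \epsilon G_{D,\epsilon}(x)\le F(x)-\overline{F}<\frac{\nu}{N}$, hence $G_{D,\epsilon}(x)\le \frac{\nu}{\underline{\epsilon}N}\le 1$. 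Next, the level‑set Bregman error bound combined with Proposition~\ref{prop:Gk}(ii) gives $dist^{2p}(x,[F\le\overline{F}])\le \theta^2\|x-T_{D,\epsilon}(x)\|^2\le \frac{2\overline{\epsilon}^2\theta^2}{m-\overline{\epsilon}\rho}G_{D,\epsilon}(x)$, so $dist^2(x,[F\le\overline{F}])\le C^2 G_{D,\epsilon}(x)^{1/p}$ with $C^2=\big(2\overline{\epsilon}^2\theta^2/(m-\overline{\epsilon}\rho)\big)^{1/p}$. Feeding this into Proposition~\ref{proposition1.5}, i.e. $E_{D,\epsilon}(x)-\overline{F}\le c_0\,dist^2(x,[F\le\overline{F}])$, and using $F(x)-\overline{F}=\epsilon G_{D,\epsilon}(x)+(E_{D,\epsilon}(x)-\overline{F})$, one gets $F(x)-\overline{F}\le \overline{\epsilon}\,G_{D,\epsilon}(x)+c_0 C^2 G_{D,\epsilon}(x)^{1/p}$. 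Finally, since $0\le G_{D,\epsilon}(x)\le 1$: if $p\le 1$ then $G_{D,\epsilon}(x)^{1/p}\le G_{D,\epsilon}(x)$, so the right side is $\le(\overline{\epsilon}+c_0C^2)G_{D,\epsilon}(x)$, yielding the BP gap condition with $q=1$; if $p>1$ then $G_{D,\epsilon}(x)\le G_{D,\epsilon}(x)^{1/p}$, so the right side is $\le(\overline{\epsilon}+c_0C^2)G_{D,\epsilon}(x)^{1/p}$, yielding $\big(F(x)-\overline{F}\big)^p\le(\overline{\epsilon}+c_0C^2)^p G_{D,\epsilon}(x)$, i.e. the BP gap condition with $q=p$. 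In both cases $q=1/\min\{1/p,1\}$, as claimed.

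For part (ii), the crucial observation is that under semiconvexity (or uniform prox‑regularity on $\mathbb{B}(\overline{x};\eta)$) we have $\partial_P g=\partial_L g$, hence by Proposition~\ref{prop1.1}(ii) also $\partial_P F(x)=\partial_L F(x)$ on the relevant neighborhood. Starting from Proposition~\ref{prop:Gk}(iii), $G_{D,\epsilon}(x)\le \frac{1}{2(m-\overline{\epsilon}\rho)}dist^2\big(0,\partial_P F(x)\big)$, and combining with the BP gap condition $G_{D,\epsilon}(x)\ge \mu\big(F(x)-\overline{F}\big)^q$ gives $dist^2\big(0,\partial_P F(x)\big)\ge 2(m-\overline{\epsilon}\rho)\mu\,\big(F(x)-\overline{F}\big)^q$. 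Taking square roots and replacing $\partial_P F$ by $\partial_L F$ produces $dist\big(0,\partial_L F(x)\big)\ge \sqrt{2(m-\overline{\epsilon}\rho)\mu}\,\big(F(x)-\overline{F}\big)^{q/2}$, which is exactly the KL property at $\overline{x}$ with exponent $q/2$ and constant $c_1=\sqrt{2(m-\overline{\epsilon}\rho)\mu}$.

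The algebraic manipulations of exponents are routine; the step needing the most care is checking that Propositions~\ref{prop:singlevalue}, \ref{prop:Gk} and~\ref{proposition1.5} are legitimately applicable on the correct ball in the uniformly prox‑regular branch — this is where Lemma~\ref{lemma:tp}, Remark~\ref{remark2.2}, Property~(A) and the enlargement of $N$ must be used consistently. A second, milder subtlety is the case split between $p\le 1$ and $p>1$ in part (i), where the bound $G_{D,\epsilon}(x)\le 1$ is exactly what forces the single exponent $q=1/\min\{1/p,1\}$ to come out correctly.
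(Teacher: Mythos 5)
Your argument is correct and follows essentially the same route as the paper's proof: part (i) chains Proposition~\ref{proposition1.5}, the level-set Bregman error bound, and Proposition~\ref{prop:Gk}(ii) to get $F(x)-\overline{F}\leq \overline{\epsilon}G_{D,\epsilon}(x)+cG_{D,\epsilon}(x)^{1/p}$, and part (ii) combines Proposition~\ref{prop:Gk}(iii) with $\partial_PF=\partial_LF$ exactly as the paper does. The only difference is that you make explicit the final exponent-combination step (bounding $G_{D,\epsilon}(x)$ and splitting $p\leq1$ versus $p>1$), which the paper leaves implicit in its ``there is some $\mu>0$'' conclusion.
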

\begin{proof}
\rm{(i):} For $x\in\mathfrak{B}(\overline{x};\frac{\eta}{2},\frac{\nu}{N})$, let $x_p\in[F\leq\overline{F}]$ s.t. $\|x-x_p\|=dist(x,[F\leq\overline{F}])$. By Lemma~\ref{lemma1.5}, we have $F(x_p)=F(\overline{x})=\overline{F}$ and the estimate for term $E_{D,\epsilon}(x)-\overline{F}$ can obtained by Proposition~\ref{proposition1.5} as following
    $$E_{D,\epsilon}(x)-\overline{F}\leq c_0{dist}^2(x,[F\leq\overline{F}]),\quad\mbox{with}\quad c_0=\frac{3}{2}L+\frac{M}{2\underline{\epsilon}}.$$
Furthermore, we obtain
\begin{eqnarray*}
F(x)-\overline{F}&=&F(x)-E_{D,\epsilon}(x)+E_{D,\epsilon}(x)-\overline{F}\\
                 &\leq&F(x)-E_{D,\epsilon}(x)+c_0dist^2(x,[F\leq \overline{F}])\qquad\mbox{(by~\eqref{eq:4.1})}\\
                 &\leq&\epsilon G_{D,\epsilon}(x)+c_0\theta^2dist^{\frac{2}{p}}\left(x,T_{D,\epsilon}(x)\right)\;\mbox{(by level-set Bregman EB condition)}\\
                 &\leq&\epsilon G_{D,\epsilon}(x)+c_0\theta^2\left(\frac{2\overline{\epsilon}^2}{m-\overline{\epsilon}\rho}\right)^{\frac{1}{p}}\left(G_{D,\epsilon}(x)\right)^{\frac{1}{p}}\\
                 &&\quad\mbox{(by (ii) of Proposition~\ref{prop:Gk} is valid)}
\end{eqnarray*}
So, there is some $\mu>0$ such that
$$G_{D,\epsilon}(x)\geq\mu\left(F(x)-\overline{F}\right)^q,\quad\forall x\in\mathfrak{B}(\overline{x};\frac{\eta}{2},\frac{\nu}{N}),\quad q=\frac{1}{\min\{\frac{1}{p},1\}}.$$
The proof is completed.\\
\rm{(ii):} By the hypothesis, the BP gap condition  holds at $\overline{x}$ with exponent $q$
over $\mathfrak{B}(\overline{x};\eta,\nu)$ , i.e.,
$$G_{D,\epsilon}(x)\geq\mu\left(F(x)-\overline{F}\right)^q~~\forall x\in \mathfrak{B}(\overline{x};\eta,\nu). $$
By the assumptions for $g$, then $\partial_Pg(x)=\partial_Lg(x)$ and $\partial_P F(x)=\partial_L F(x)$, $\forall
x\in \mathfrak{B}(\overline{x};\frac{\eta}{2},\frac{\nu}{N})$
the statement (iv) of Proposition~\ref{prop:Gk} is valid, we have
\begin{equation}
G_{D,\epsilon}(x)\leq\frac{dist^2(0,\partial_LF(x))}{2(m-\overline{\epsilon}\rho)}.
\end{equation}
It follows that
\begin{equation}
2(m-\overline{\epsilon}\rho)\mu\left(F(x)-\overline{F}\right)^q\leq\left[dist\big{(}0,\partial_L F(x)\big{)}\right]^2.
\end{equation}
Thus
\begin{equation}
dist\big{(}0,\partial_L F(x)\big{)}\geq\sqrt{2(m-\overline{\epsilon}\rho)\mu}\left(F(x)-\overline{F}\right)^{\frac{q}{2}},
\end{equation} and the assertion is justified.
\end{proof}

Figure 1 summarizes the main results of this section.
\begin{figure}
\centering
\begin{center}
\scriptsize
		\tikzstyle{format}=[rectangle,draw,thin,fill=white]
		\tikzstyle{test}=[diamond,aspect=2,draw,thin]
		\tikzstyle{point}=[coordinate,on grid,]
\begin{tikzpicture}
[
>=latex,
node distance=5mm,
 ract/.style={draw=blue!50, fill=blue!5,rectangle,minimum size=6mm, very thick, font=\itshape, align=center},
 racc/.style={rectangle, align=center},
 ractm/.style={draw=red!100, fill=red!5,rectangle,minimum size=6mm, very thick, font=\itshape, align=center},
 cirl/.style={draw, fill=yellow!20,circle,   minimum size=6mm, very thick, font=\itshape, align=center},
 raco/.style={draw=green!500, fill=green!5,rectangle,rounded corners=2mm,  minimum size=6mm, very thick, font=\itshape, align=center},
 hv path/.style={to path={-| (\tikztotarget)}},
 vh path/.style={to path={|- (\tikztotarget)}},
 skip loop/.style={to path={-- ++(0,#1) -| (\tikztotarget)}},
 vskip loop/.style={to path={-- ++(#1,0) |- (\tikztotarget)}}]

        \node (a) [ractm, xshift=-20]{\baselineskip=3pt\small {\bf level-set subdifferential}\\
                          \baselineskip=3pt\small {\bf error bound}\\ \baselineskip=3pt\footnotesize$dist^{\gamma}\left(x,[F\leq\overline{F}]\right)\leq c_3{dist}\left(0,\partial_PF(x)\right)$};
        \node (b) [ract, below = of a, xshift=20, yshift=-20]{\baselineskip=3pt\small {\bf level-set Bregman error bound}\\ \baselineskip=3pt\footnotesize$dist^p\left(x,[F\leq\overline{F}]\right)\leq\theta dist\left(x,T_{D,\epsilon}(x)\right)$\\
        \baselineskip=3pt\footnotesize$\forall x\in\mathfrak{B}(\overline{x};\eta,\nu)$};
        \node (b1) [ract, right = of b, xshift=20]{\baselineskip=3pt\small {\bf BP gap condition}\\
                          \baselineskip=3pt\footnotesize$G_{D,\epsilon}\left(x\right)\geq\mu\left(F(x)-\overline{F}\right)^{q}$};
        \node (bbb) [racc, below= of b, xshift=100, yshift=32]{\baselineskip=3pt\footnotesize$g$ is\\
                                                              \baselineskip=3pt\footnotesize semi-convex};
        \node (aa1) [cirl, above = of a]{$+$};
        \node (aa2) [cirl, left = of a]{$+$};
        \node (aa2a) [racc, below= of aa2, yshift=12]{\baselineskip=3pt\footnotesize $g$ is\\
                                               \baselineskip=3pt\footnotesize semi-convex};
        \node (aba) [above = of aa2, yshift=12] {\baselineskip=3pt\footnotesize {\bf(H$_3$)}};
        \node (aba1) [racc, above= of aba, xshift=-15, yshift=-8]{\baselineskip=3pt\footnotesize $g$ is semi-convex};
        \node (aa3) [ract, left = of aa2]{\baselineskip=3pt\small {\bf Bregman proximal}\\
        \baselineskip=3pt\small {\bf error bound}\\
        \baselineskip=3pt\footnotesize$dist\left(x,\overline{\mathbf{X}}_P\right)\leq c_4dist\left(x,T_{D,\epsilon}(x)\right)$};
        \node (aa3-1) [ract, above = of aa3, xshift=-45]{\baselineskip=3pt\footnotesize{\bf Calmness}\\
                                                         \baselineskip=3pt\footnotesize{\bf at} $(0,\overline{x})$.};
        \node (aa4) [ract, above = of aa1]{\baselineskip=3pt\small {\bf weak metric subregularity}\\ \baselineskip=3pt\footnotesize$dist\left(0,\partial_PF(x)\right)\geq c_5dist\left(x,\overline{\mathbf{X}}_P\right)$\\
        \baselineskip=3pt\footnotesize\qquad\qquad\qquad\qquad$\forall x\in\mathfrak{B}(\overline{x};\eta,\nu)$};
        \node (aa5) [ract, left = of aa4, xshift=-40, yshift=30]{\baselineskip=3pt\small {\bf metric subregularity}\\ \baselineskip=3pt\footnotesize$dist\left(0,\partial_PF(x)\right)\geq c_5dist\left(x,\overline{\mathbf{X}}_P\right)$\\
        \baselineskip=3pt\footnotesize\qquad\qquad\qquad\qquad$\forall x\in\mathbb{B}(\overline{x},\eta)$};
        \node (dd)[ract, below = of b,xshift=-112, yshift=-15]{\baselineskip=3pt\small $\{F(x^k)\}$ Q-linear\\
                                         \baselineskip=3pt\small $\{x^k\}$ R-linear};
        \node (ddd)[ract, right = of dd, xshift=-16]{\baselineskip=3pt\small $\{dist(x^k,[F\leq\hat{F}])\}$\\
                                       \baselineskip=3pt\small Q-linear};
        \node (dddd)[racc, left = of dd, xshift=10]{\baselineskip=3pt\small Linear convergence for VBPG};
        \node (ddddd)[ract, right = of ddd, xshift=10]{\baselineskip=3pt\small {\bf strong level-set}\\
                                                        \baselineskip=3pt\small {\bf subdifferential error bound}\\ \baselineskip=3pt\footnotesize$dist\left(x,[F\leq\overline{F}]\right)\leq c_3'dist\left(x,\partial_PF(x)\right)$\\
        \baselineskip=3pt\footnotesize$\forall x\in[\overline{F}< F<\overline{F}+\nu]$};
        \node (g) [ract, right = of a, xshift=3]{\baselineskip=3pt\small {\bf KL property}\\ \baselineskip=3pt\footnotesize $dist\left(0,\partial_LF(x)\right)\geq c_1\left(F(x)-\overline{F}\right)^{\alpha}$};
        \node (hh) [racc, below = of b, xshift=205, yshift=90]{\baselineskip=3pt\footnotesize $g$ is\\
                                                               \baselineskip=3pt\footnotesize semi-convex};
        \path 
              (b) edge[->] (b1)
              (g) edge[->] (a)
            (aa3) edge[->] (aa2)
            (aa2) edge[->] (a)
            (aa4) edge[->] (aa1)
            (aa1) edge[->] (a)
            (aba) edge[->] (aa2)
            (aba) edge[->] (aa1)
             (b1) edge[->] (5.95,-0.5)
          (aa3-1) edge[->] (-9.25,0.7)
          (aa3-1) edge[->] (-9.25,3.4)
          (-9.25,3.4) edge[->] (aa3-1);
        \path ( 1.5, -0.7) edge[->] ( 1.5, -1.9);
        \path (-3.1, -0.7) edge[->] (-3.1, -4.3);
        \path ( 0.65, -4.7) edge[->] (1.5, -4.7);
        \path ( 1.5, -5.1) edge[->] (0.65, -5.1);
        \path (aa5) edge[->,hv path] (aa4);
        \path (-6.65,0.7) edge[->, vh path] (-3.15,2.55);
\end{tikzpicture}
\caption{The relationship among the notions of the level-set subdifferential EB, subregularity of subdifferential, Bregman proximal EB, KL property, level-set Bregman EB and Bregman gap condition \big{(}In this Figure, conditions of semiconvexity of $g$ can be replaced by uniformly proximal regular with $\rho$ and $\eta$ and $x\in\mathfrak{B}(\overline{x};\frac{\eta}{2},\frac{\nu}{N})$ with $N\geq\frac{2\overline{\epsilon}\nu}{m-\overline{\epsilon}L}/\left(\frac{\eta}{2}\right)^2$ satisfies Property (A)\big{)}}\label{fig:1}
\end{center}
\end{figure}
\subsubsection{Linear convergence of various algorithms under second type error bounds}
{\bf Application 7.3: Linear convergence under proximal-PL inequality and Bregman proximal gap}\\
\cite{Schmidt2016} proposes the concept of proximal-PL inequality for solving problem (P) where $F$ is invex function, $g$ is convex, i.e., there is $\mu>0$ such that the following inequality holds:
$$\frac{1}{2}D_g(x,L)\geq\mu\left(F(x)-F^*\right).$$
where $F^*$ is the global minimum value  and
$$D_g(x,\alpha)=-2\alpha\min_{y\in\RR^n}\left[\langle\nabla f(x),y-x\rangle+\frac{\alpha}{2}\|y-x\|^2+g(y)-g(x)\right],$$
which is a global version of Bregman proximal gap function with $D^k(x,y)=\frac{\|x-y\|^2}{2}$. \cite{Schmidt2016} proves the sequence $\{F(x^k)\}$ generated by PG method with a step size of $1/L$ linearly converges to $F^*$ under proximal-PL inequality. For the fully nonconvex case, Theorem~\ref{theo1}  shows the Q-linear convergence of $\{F(x^k)\}$ and the R-linear convergence of $\{x^k\}$ under the Bregman proximal gap condition, which is weaker than the proximal-PL inequality. Observe that the proximal PL inequality implies that every critical point achieves an optimum $F^*$, and the strong level-set subdifferential error bound condition holds. If $g$ is semi-convex, by Theorem~\ref{theo:n-s} the proximal PL inequality is also a necessary condition for linear convergence in the sense of~\eqref{x-Q-linear-2}.

{\bf Application 7.4: Linear convergence  under KL property}\\
Various variable metric proximal gradient methods are provided in following algorithms for problem (P)
$$x^{k+1}\rightarrow\min\langle\nabla f(x^k),x-x^k\rangle+g(x)+\frac{1}{2}\|x-x^k\|_{B_k}^2,$$
where $B_k$ is positive definite matrix.\\
The extrapolation and line-search techniques may be combined with the standard VMPG. For full convex problem (P), the converge rate of $O(1/k^2)$ of $\{F(x^k)\}$ is provided. Recently, E. Chonzennx et. al. \cite{Chouzenous2014} proposed an inexact version of VMPG algorithm for problem (P) where $g$ is convex. And the authors also provided linear convergence of VMPG under KL property with exponent $\frac{1}{2}$. Noted that VMPG is the special case with $D^k=\frac{\|x-y\|_{B_k}^2}{2}$, Theorem~\ref{theo1} states that VMPG has the linear convergence for $\{x^k\}$ and $\{F(x^k)\}$ under level-set subdifferential EB condition. Moreover, the strong level-set subdifferential error bound condition on $[\overline{F}<F<\overline{F}+\nu]$ is necessary and sufficient for linear convergence in the sense of~\eqref{x-Q-linear}. Mention that if $g$ is semi-convex, level-set subdifferential EB condition with exponent $\gamma=1$ is equivalent to KL exponent $\frac{1}{2}$ condition.

\section{Sufficient conditions for the  level-set subdifferential error bound condition to hold on $\mathfrak{B}(\overline{x};\eta,\nu)$ with $\overline{x}\in\overline{\mathbf{X}}_P$}\label{sec:sufficient}
In this section, we examine sufficient conditions to guarantee level-set subdifferential error bound condition
at $\overline{x}\in\overline{\mathbf{X}}_P$ on $\mathfrak{B}(\overline{x};\eta,\nu)$, where $\overline{x}$ is a proximal critical point of $F=f+g$\\

First, we provide some new notions
on relaxed strong convexity of function $f$ on $\mathbb{B}(\overline{x};\eta)$.
Given $z\in\mathbb{B}(\overline{x};\eta)$, for brevity, we denote
$Proj_{\mathbb{B}(\overline{x};\eta)\cap\overline{\mathbf{X}}_P}(z)$ by $\overline{z}_p$.
The following notations can be viewed as the local version for that in H. Karimi et al's and I. Necoara et al's paper~\cite{Schmidt2016},\cite{Necoara2018} respectively.
\begin{itemize}
\item[1.] Local strong-convexity (LSC) on $\mathbb{B}(\overline{x};\eta)$:
    $$f(y)\geq f(x)+\langle\nabla f(x),y-x\rangle+\frac{\mu}{2}\|y-x\|^2,\quad\forall x,y\in\mathbb{B}(\overline{x};\eta).$$
\item[2.] Local essentially-strong-convexity at $\overline{x}_p$ (LESC) on $\mathbb{B}(\overline{x};\eta)$:
    \begin{eqnarray}
    f(y)\geq f(x)+\langle\nabla f(x),y-x\rangle+\frac{\mu}{2}\|y-x\|^2,
    \quad\forall x,y\in\mathbb{B}(\overline{x};\eta)\quad\mbox{with}\quad\overline{x}_p=\overline{y}_p.
    \end{eqnarray}
\item[3.] Local weak- strong-convexity at $\overline{x}_p$ (LWSC) on $\mathbb{B}(\overline{x};\eta)$:
    \begin{eqnarray}
    f(\overline{x}_p)\geq f(x)+\langle\nabla f(x),\overline{x}_p-x\rangle+\frac{\mu}{2}\|\overline{x}_p-x\|^2,
    \quad\forall x\in\mathbb{B}(\overline{x};\eta).
    \end{eqnarray}
\item[4.] Local quadratic-gradient-growth (LQGG) at $\overline{x}_p$ on $\mathbb{B}(\overline{x};\eta)$:
    \begin{eqnarray}
    \langle\nabla f(x)-\nabla f(\overline{x}_p), x-\overline{x}_p\rangle\geq\mu\|\overline{x}_p-x\|^2,
    \quad\forall x\in\mathbb{B}(\overline{x};\eta).
    \end{eqnarray}
\end{itemize}
For the case $g=0$, the following two notions are introduced.
\begin{itemize}
\item[5.] Local restricted secant inequality (LRSI):
\begin{eqnarray}
\langle\nabla f(x),x-\overline{x}_p\rangle\geq\mu\|x-\overline{x}_p\|^2,\quad\forall x\in\mathbb{B}(\overline{x};\eta).
\end{eqnarray}
\item[6.] Local Polyak-{\L}ojasiewicz (LPL) inequality:
    \begin{eqnarray}
    \frac{1}{2}\|\nabla f(x)\|^2\geq\mu\left(f(x)-f(\overline{x})\right),\quad\forall x\in\mathbb{B}(\overline{x};\eta).
    \end{eqnarray}
\end{itemize}
It's easy to show that the following implications hold for the function $f$ on $\mathbb{B}(\overline{x};\eta)$.
$$(LSC)\Rightarrow(LESC)\Rightarrow(LWSC).$$
For the case $g=0$, the assumptions of $\nabla f(\overline{x}_p)=0$ and the LQGG  reduce to the local restricted secant inequality (LRSI). So  we have:
$$(LWSC)\Rightarrow(LRSI)\Rightarrow(LPL)\qquad\mbox{(if $g=0$)}.$$
Along with Assumption 3,  we can establish the level-set subdifferential error bound for $F$, whenever $g$ is prox-regular.
\begin{proposition}[Sufficient conditions for weak metric subregularity]\label{prop:5.1}
Suppose $\overline{x}\in\overline{X}_P$, $g$ is uniformly prox-regular around $\bar x\in \mathbf{dom}~g$ with modulus $\rho$. If  one of the following conditions holds
\begin{itemize}
\item[{\rm(i)}] $f$ is local weak strongly convex at $\overline{x}_p$ (LWSC) with modulus $\mu$ and $\mu>\rho$ on $\mathbb{B}(\overline{x};\eta)$.
\item[{\rm(ii)}] $f$ satisfies local quadratic gradient growth condition at $\overline{x}_p$ (LQGG) with modulus $\mu$ and $\mu>\rho$ on $\mathbb{B}(\overline{x};\eta)$,
\end{itemize}
then $F$ satisfies the weak metric subregularity condition at $\overline{x}$; that is,
$$dist(0,\partial_P F(x))\geq\frac{(\mu-\rho )}{2}dist(x,\overline{\mathbf{X}}_p),\quad\forall x\in\mathfrak{B}(\overline{x};\eta,\nu).$$
\end{proposition}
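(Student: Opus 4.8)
The plan is to fix an arbitrary $x\in\mathfrak{B}(\overline{x};\eta,\nu)$ (the claimed inequality being vacuous if this set is empty, and also if $\partial_P F(x)=\emptyset$, in which case $dist(0,\partial_P F(x))=\infty$), pick an arbitrary $\xi\in\partial_P F(x)$, show $\|\xi\|\ge\frac{\mu-\rho}{2}\,dist(x,\overline{\mathbf{X}}_P)$, and then take the infimum over $\xi$. By Proposition~\ref{prop1.1}(ii) one may write $\xi=\nabla f(x)+v$ with $v\in\partial_P g(x)\subseteq\partial_L g(x)$. Let $\overline{x}_p$ be the projection of $x$ onto $\mathbb{B}(\overline{x};\eta)\cap\overline{\mathbf{X}}_P$ (taken as a point attaining, or nearly attaining, that distance — a minor technicality since the paper's own notation $\overline{x}_p$ suppresses it); then $\overline{x}_p\in\mathbb{B}(\overline{x};\eta)$, $dist(x,\overline{\mathbf{X}}_P)\le\|x-\overline{x}_p\|$, and, because $\overline{x}_p\in\overline{\mathbf{X}}_P$, the inclusion $0\in\partial_P F(\overline{x}_p)=\nabla f(\overline{x}_p)+\partial_P g(\overline{x}_p)$ gives $\overline v:=-\nabla f(\overline{x}_p)\in\partial_P g(\overline{x}_p)\subseteq\partial_L g(\overline{x}_p)$. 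If $x=\overline{x}_p$ the target inequality is trivial, so assume $x\ne\overline{x}_p$.

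The common device is to lower-bound $\langle\xi,x-\overline{x}_p\rangle$ and then invoke Cauchy--Schwarz in the form $\|\xi\|\ge\langle\xi,x-\overline{x}_p\rangle/\|x-\overline{x}_p\|$. Applying the uniform prox-regularity inequality~\eqref{eq:varphi-0} for $g$ with base point $x$, target point $\overline{x}_p$, and subgradient $v$ yields $\langle v,x-\overline{x}_p\rangle\ge g(x)-g(\overline{x}_p)-\tfrac{\rho}{2}\|x-\overline{x}_p\|^2$. In case~(i), the LWSC inequality at $\overline{x}_p$ rearranges to $\langle\nabla f(x),x-\overline{x}_p\rangle\ge f(x)-f(\overline{x}_p)+\tfrac{\mu}{2}\|x-\overline{x}_p\|^2$; adding these two inequalities and using $\xi=\nabla f(x)+v$ gives $\langle\xi,x-\overline{x}_p\rangle\ge\big(F(x)-F(\overline{x}_p)\big)+\tfrac{\mu-\rho}{2}\|x-\overline{x}_p\|^2$. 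Here is where the separation-of-critical-values hypothesis (H$_3$) stated just before the proposition enters (with $\eta\le\delta$): since $\overline{x}_p\in\overline{\mathbf{X}}_P\cap\mathbb{B}(\overline{x};\eta)$ one gets $F(\overline{x}_p)\le F(\overline{x})=\overline{F}<F(x)$, so the bracket is nonnegative and $\langle\xi,x-\overline{x}_p\rangle\ge\tfrac{\mu-\rho}{2}\|x-\overline{x}_p\|^2$.

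In case~(ii) I would instead apply~\eqref{eq:varphi-0} a second time, now with base point $\overline{x}_p$, target point $x$, and subgradient $\overline v$, and add it to the first application; this cancels the function values and leaves $\langle v-\overline v,x-\overline{x}_p\rangle\ge-\rho\|x-\overline{x}_p\|^2$, i.e. $\langle v,x-\overline{x}_p\rangle\ge-\langle\nabla f(\overline{x}_p),x-\overline{x}_p\rangle-\rho\|x-\overline{x}_p\|^2$ after substituting $\overline v=-\nabla f(\overline{x}_p)$. Hence $\langle\xi,x-\overline{x}_p\rangle\ge\langle\nabla f(x)-\nabla f(\overline{x}_p),x-\overline{x}_p\rangle-\rho\|x-\overline{x}_p\|^2$, and the LQGG condition at $\overline{x}_p$ bounds the first term below by $\mu\|x-\overline{x}_p\|^2$, giving $\langle\xi,x-\overline{x}_p\rangle\ge(\mu-\rho)\|x-\overline{x}_p\|^2\ge\tfrac{\mu-\rho}{2}\|x-\overline{x}_p\|^2$ (case~(ii) alone in fact yields the sharper constant $\mu-\rho$; $\tfrac{\mu-\rho}{2}$ is the constant valid uniformly under either hypothesis).

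In both cases, dividing by $\|x-\overline{x}_p\|$ and using $dist(x,\overline{\mathbf{X}}_P)\le\|x-\overline{x}_p\|$ gives $\|\xi\|\ge\tfrac{\mu-\rho}{2}\,dist(x,\overline{\mathbf{X}}_P)$; since $\xi\in\partial_P F(x)$ was arbitrary, $dist(0,\partial_P F(x))\ge\tfrac{\mu-\rho}{2}\,dist(x,\overline{\mathbf{X}}_P)$, and $\mu>\rho$ makes this nontrivial. The main obstacle is essentially bookkeeping: one must keep the directions of the prox-regularity and LWSC/LQGG inequalities consistent when summing them, and — conceptually — recognize that case~(i) genuinely needs $F(x)\ge F(\overline{x}_p)$ (supplied by (H$_3$) together with the level-set restriction built into $\mathfrak{B}(\overline{x};\eta,\nu)$), whereas case~(ii) does not, which is precisely why the stated constant is $\tfrac{\mu-\rho}{2}$ rather than $\mu-\rho$.
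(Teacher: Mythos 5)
Your proof is correct and follows essentially the same route as the paper: in case (i) you add the LWSC inequality to the prox-regularity lower bound for $g$ at base point $x$, in case (ii) you add LQGG to the hypomonotonicity of $\partial_P g$ obtained from two applications of uniform prox-regularity together with $0=\nabla f(\overline{x}_p)+\overline v$, and then you conclude by Cauchy--Schwarz, exactly as the paper does. The only difference is one of care, in your favor: where the paper simply asserts $F(\overline{x}_p)=F(\overline{x})=F_{\zeta}$ to discard the value term in case (i), you make explicit that dropping $F(x)-F(\overline{x}_p)$ requires $F(\overline{x}_p)\leq\overline{F}<F(x)$, supplied by Assumption (H$_3$) combined with the restriction $x\in\mathfrak{B}(\overline{x};\eta,\nu)$, and you correctly observe that case (ii) needs no such value comparison and in fact yields the sharper constant $\mu-\rho$.
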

\begin{proof}
{\rm(i):} If $f$ is LWSC at $\overline{x}_p$ on $\mathbb{B}(\overline{x};\eta)$, then we have
\begin{equation}\label{eq:s-1}
f(\overline{x}_p)\geq f(x)+\langle\nabla f(x),\overline{x}_p-x\rangle+\frac{\mu}{2}\|\overline{x}_p-x\|^2.
\end{equation}
Since $g$ is uniformly prox-regular around $\bar x$ with $\rho$, then $\partial_Pg(x)=\partial_Lg(x)$ and
\begin{equation}\label{eq:s-2}
g(\overline{x}_p)\geq g(x)+\langle\xi,\overline{x}_p-x\rangle-\frac{\rho}{2}\|\overline{x}_p-x\|^2,\quad\forall \xi\in\partial_P g(x).
\end{equation}
Adding inequalities~\eqref{eq:s-1} and~\eqref{eq:s-2}, we obtain
$$F(\overline{x}_p)=F(\overline{x})=F_{\zeta}\geq F(x)+\langle\nabla f(x)+\xi,\overline{x}_p-x\rangle+\frac{(\mu-\rho )}{2}\|\overline{x}_p-x\|^2.$$
and
$$\langle\nabla f(x)+\xi,x-\overline{x}_p\rangle\geq\frac{(\mu-\rho )}{2}\|\overline{x}_p-x\|^2,\quad\forall\xi\in\partial_P g(x),\quad\forall x\in\mathbb{B}(\overline{x};\eta).$$
Using Cauchy-Schwartz on above inequality, we conclude
$$dist(0,\partial_PF(x))\geq\frac{(\mu-\rho )}{2}\|\overline{x}_p-x\|\geq\frac{(\mu-\rho )}{2}dist(x,\overline{\mathbf{X}}_P),\quad\forall x\in\mathbb{B}(\overline{x};\eta),$$
which yields the desired results.\\
{\rm(ii):} If $f$ is LQGG at $\overline{x}_p$ on $\mathbb{B}(\overline{x};\eta)$, then we have
$$\langle\nabla f(x)-\nabla f(\overline{x}_p),x-\overline{x}_p\rangle\geq\mu\|\overline{x}_p-x\|^2,\quad\forall x\in\mathbb{B}(\overline{x};\eta).$$
Since $g$ is semi-convex, we have
$$\langle u-v,x-\overline{x}_p\rangle\geq-\rho\|x-\overline{x}_p\|^2,\quad\forall u\in\partial_Pg(x),\quad\forall v\in\partial_Pg(\overline{x}_p).$$
Adding the above two inequalities for $x\in\mathbb{B}(\overline{x};\eta)$, we obtain
$$\langle(\nabla f(x)+u)-(\nabla f(\overline{x}_p)+v),x-\overline{x}_p\rangle\geq\frac{\mu}{2}\|\overline{x}_p-x\|^2.$$
Since $\overline{x}_p$ is a proximal critical point,  $0=\nabla f(\overline{x}_p)+v$ for some $v\in\partial_P g(\overline{x}_p)$.
With this choice of $v$, the last above inequality yields
$$\langle\nabla f(x)+u,x-\overline{x}_p\rangle\geq\frac{(\mu-\rho )}{2}\|\overline{x}_p-x\|^2\geq\frac{(\mu-\rho )}{2}dist(x,\overline{\mathbf{X}}_P),\quad\forall u\in\partial_P g(x),\quad\forall x\in\mathbb{B}(\overline{x};\eta).$$
This is enough for the proof of proposition.
\end{proof}
If we take $D^k(x,y)=\frac{\|x-y\|^2}{2}$, $\epsilon^k=\epsilon$, then VPBG is the proximal gradient (PG) method, the subproblem (AP$^k$) becomes to
\begin{equation}
x^{k+1}=Prox_{g}^{\epsilon}\left(x^k-\epsilon\nabla f(x^k)\right)
\end{equation}
where $Prox_{g}^{\epsilon}(y)=\arg\min\limits_{x\in\RR^n}\{g(x)+\frac{1}{2\epsilon}\|x-y\|^2\}$.\\
If $Prox_{g}^{\epsilon}(y)$ is a single-valued map and we have
\begin{equation}
\overline{x}\in\overline{\mathbf{X}}_P\Leftrightarrow 0\in\nabla f(\overline{x})+\partial_P g(\overline{x})\Leftrightarrow\overline{x}=Prox_{g}^{\epsilon}\left(\overline{x}-\epsilon\nabla f(\overline{x})\right).
\end{equation}
The following proposition present a sufficient conditions for Bregman proximal error bound.
\begin{definition}[Luo-Tseng error bound~\cite{Tseng2009}]
We say the Luo-Tseng error bound holds if any $\xi\geq\inf_{x\in\RR^n} F(x)$, there exists constant $c_6>0$ and $\sigma>0$ such that
\begin{equation}
dist(x,\overline{\mathbf{X}}_P)\leq c_6\|x-Prox_g^{\epsilon}\left(x-\epsilon\nabla f(x)\right)\|
\end{equation}
whenever $F(x)\leq\xi$, $\|x-Prox_g^{\epsilon}\left(x-\epsilon\nabla f(x)\right)\|\leq\sigma$.
\end{definition}
\begin{proposition}\label{prop:LuoTseng}
The Luo-Tseng error bound condition implies the Bregman proximal error bound when $g$ is convex.
\end{proposition}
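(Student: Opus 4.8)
The plan is to reduce the claim to a \emph{comparison of residuals} between the ordinary proximal--gradient map and the Bregman map $T_{D,\epsilon}$, after which the Luo--Tseng inequality transfers to a neighbourhood of a proximal critical point by a routine continuity argument. Throughout I work under the standing hypotheses (Assumptions~\ref{assump1} and~\ref{assump2} with $\overline{\epsilon}<m/L$), so that $T_{D,\epsilon}(x)\neq\emptyset$ for every $x$ by Proposition~\ref{prop:Ek}(i). Fix $\overline{x}\in\overline{\mathbf{X}}_P$, set $\overline{F}=F(\overline{x})$, and write $x^{+}=Prox_g^{\epsilon}\big(x-\epsilon\nabla f(x)\big)$; since $g$ is convex this is a single-valued, continuous map, and by the equivalence recorded just above the statement one has $\overline{x}=Prox_g^{\epsilon}\big(\overline{x}-\epsilon\nabla f(\overline{x})\big)$.

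\emph{Step 1 (residual comparison).} Fix any $x$ and any $t\in T_{D,\epsilon}(x)$. Applying Proposition~\ref{prop:Fk}(i) once with the Euclidean Bregman distance $\tfrac12\|\cdot-\cdot\|^{2}$ and once with $D$, and using $\partial_P g=\partial_L g=\partial g$ from convexity of $g$, the two first-order conditions become
\[
-\nabla f(x)-\tfrac1\epsilon\big(x^{+}-x\big)\in\partial g(x^{+}),\qquad
-\nabla f(x)-\tfrac1\epsilon\nabla_y D(x,t)\in\partial g(t).
\]
Subtracting and invoking monotonicity of $\partial g$ gives $\big\langle\nabla_y D(x,t)-(x^{+}-x),\,x^{+}-t\big\rangle\ge0$. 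Putting $u=x-x^{+}$, $v=x-t$, and using $\nabla_y D(x,t)=\nabla K(t)-\nabla K(x)$ together with the $m$-strong convexity and $M$-Lipschitz gradient of $K$ (Assumption~\ref{assump2}), this expands to $\|u\|^{2}+m\|v\|^{2}\le(M+1)\|u\|\,\|v\|$, whence $\|x-x^{+}\|\le(M+1)\|x-t\|$. Taking the infimum over $t\in T_{D,\epsilon}(x)$ yields the uniform bound
\[
\big\|x-Prox_g^{\epsilon}\big(x-\epsilon\nabla f(x)\big)\big\|\;\le\;(M+1)\,dist\big(x,T_{D,\epsilon}(x)\big),\qquad\forall x\in\RR^{n}.
\]

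\emph{Step 2 (localization and conclusion).} Apply the Luo--Tseng error bound with the level $\xi=\overline{F}+1\ge\inf F$ to obtain constants $c_6>0$ and $\sigma>0$. Since $x\mapsto x^{+}$ is continuous and $\overline{x}-\overline{x}^{+}=0$, there is $\eta>0$ such that $\|x-x^{+}\|<\sigma$ for all $x\in\mathbb{B}(\overline{x};\eta)$. Then for every $x\in\mathfrak{B}(\overline{x};\eta,1)$ one has $F(x)<\overline{F}+1=\xi$ and $\|x-x^{+}\|<\sigma$, so the Luo--Tseng inequality applies and, together with Step~1,
\[
dist\big(x,\overline{\mathbf{X}}_P\big)\le c_6\,\|x-x^{+}\|\le c_6(M+1)\,dist\big(x,T_{D,\epsilon}(x)\big),
\]
which is exactly the Bregman proximal error bound at $\overline{x}$ with $c_4=c_6(M+1)$, $\nu=1$, and the above $\eta$. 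The only genuinely technical step is the residual comparison in Step~1 (the monotonicity computation); the remaining localization is continuity bookkeeping. I would also remark that the same inequality gives $\|x-t\|\le\frac{M+1}{m}\|x-x^{+}\|$, so the two residuals are in fact equivalent up to constants, although only the one direction above is needed here.
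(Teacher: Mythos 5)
Your proof is correct, but it takes a genuinely different (and more general) route than the paper's. The paper's own argument is essentially a pure localization: it works in the setting introduced just before the statement, where $D(x,y)=\tfrac12\|x-y\|^2$, so that $T_{D,\epsilon}(x)$ \emph{is} the prox-gradient map $Prox_g^{\epsilon}(x-\epsilon\nabla f(x))$ (single-valued and continuous since $g$ is convex); one then only has to choose $\eta,\nu$ so that $F(x)\leq\xi$ and $\|x-T_{D,\epsilon}(x)\|\leq\sigma$ on $\mathfrak{B}(\overline{x};\eta,\nu)$ and quote the Luo--Tseng inequality verbatim, with no comparison of residuals needed. Your Step~2 is exactly this localization (your choice $\xi=\overline{F}+1$, $\nu=1$ is a concrete instance of it), but your Step~1 is new relative to the paper: by writing the two optimality conditions from Proposition~\ref{prop:Fk}(i), using monotonicity of $\partial g$ and the $m$-strong convexity and $M$-Lipschitz gradient of $K$ from Assumption~\ref{assump2}, you obtain $\|x-Prox_g^{\epsilon}(x-\epsilon\nabla f(x))\|\leq (M+1)\,dist(x,T_{D,\epsilon}(x))$ (and the reverse bound with constant $(M+1)/m$), so the Euclidean prox-gradient residual and the Bregman residual are equivalent up to constants. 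What this buys is that the implication holds for an \emph{arbitrary} Bregman distance satisfying Assumption~\ref{assump2}, with the explicit constant $c_4=c_6(M+1)$, whereas the paper's proof as written covers the case where the Bregman map coincides with the Euclidean proximal map; the price is the (routine but nontrivial) monotonicity computation. Both arguments rely on the same two facts for the localization: continuity of the residual map and its vanishing at $\overline{x}\in\overline{\mathbf{X}}_P$.
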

\begin{proof}
First note that by the  hypotheses, $T_{D,\epsilon}(x)={Prox}_g^{\epsilon}(x-\epsilon\nabla f(x))$ is single-valued and continuous in $x$. For $\sigma$ and $\xi>\inf F(x)$, there are $\eta>0$ and $\nu\in(0,+\infty)$ such that $\|x-T_{D,\epsilon}(x)\|<\sigma$ and $F(x)\leq\xi$, whenever $\|x-\overline{x}\|\leq\eta$ and $F(x)\leq\overline{F}+\nu$. Since the Luo-Tseng error bound condition holds at $\overline{x}$,  we have
\begin{eqnarray}\label{eq:41}
dist(x,\overline{\mathbf{X}}_P)\leq c_6\|x-T_{D,\epsilon}(x)\|,\quad\forall x\in\mathfrak{B}(\overline{x};\eta,\nu).
\end{eqnarray}
This shows that  the  Bregman proximal error bound holds at $\overline{x}$.
\end{proof}
Now we are ready to present the main results on sufficient conditions to guarantee that
 the level-set subdifferential error
bound  holds at $\overline{x}$ on $\mathfrak{B}(\overline{x},\eta,\nu)$, where $\overline{x}$ is an accumulation point of the sequence $\{x^k\}$ generated by VBPG.
\begin{theorem}[Sufficient conditions for the existence of a level-set subdifferential EB]
Consider problem (P). Suppose that Assumption~\ref{assump1} and Assumption~\ref{assump2} hold, and $\overline{x}\in\overline{\mathbf{X}}_P$. If one of following conditions hold, then $F$ satisfies the level-set subdifferential error bound condition at $\overline{x}$ on $\mathfrak{B}(\overline{x};\eta,\nu)$.
\begin{itemize}
\item[{\rm(i)}] $F=f+g$ satisfies the KL exponent at $\overline{x}$ on $\mathfrak{B}(\overline{x};\eta,\nu)$ at $\overline{x}$.
\item[{\rm(ii)}] $F=f+g$ satisfies Bregman proximal error bound condition, $g$ is semi-convex or $g$ is uniformly prox-regular around $\overline{x}$, $x\in\mathfrak{B}(\overline{x};\frac{\eta}{2},\frac{\nu}{N})$ with $N\geq\frac{2\overline{\epsilon}\nu}{m-\overline{\epsilon}L}/\left(\frac{\eta}{2}\right)^2$ satisfies Property (A) and Assumption {\bf(H$_3$)} holds.
\item[{\rm(iii)}] $F=f+g$ satisfies weak metric subregularity at $\overline{x}$ and Assumption {\bf(H$_3$)} holds.
\item[{\rm(iv)}] With $g=0$, $f=F$ satisfies the (LPL) inequality on $\mathbb{B}(\overline{x};\eta)$.
\end{itemize}
\end{theorem}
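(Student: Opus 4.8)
The plan is to dispose of the four cases one at a time, in each case reducing to a result already proved earlier in the paper; no fresh estimates will be needed beyond one elementary observation in case~(iv).

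First I would handle case~(i): if $F$ has the KL property at $\overline{x}$ with some exponent $\alpha\in(0,1)$ over $\mathfrak{B}(\overline{x};\eta,\nu)$, then part~(b) of Theorem~\ref{kl} already gives the level-set subdifferential error bound at $\overline{x}$ with exponent $\gamma=\alpha/(1-\alpha)$ over $\mathfrak{B}(\overline{x};\eta/2,\nu)$, which is exactly the claimed conclusion (with the radius halved). For cases~(ii) and~(iii), I would observe that the hypotheses listed there are, respectively, precisely condition~(i) and condition~(ii) of Theorem~\ref{theorem:subregularity}, together with Assumption~{\bf(H$_3$)} at $\overline{x}\in\overline{\mathbf{X}}_P$; so in both cases the level-set subdifferential error bound at $\overline{x}$ with $\gamma=1$ follows immediately from that theorem. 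It is worth recording here that Proposition~\ref{prop:5.1} furnishes concrete sufficient conditions --- local weak strong convexity or local quadratic gradient growth of $f$ with modulus exceeding the prox-regularity modulus of $g$ --- for the weak metric subregularity required in case~(iii), and that Proposition~\ref{prop:LuoTseng} shows the Luo--Tseng error bound implies the Bregman proximal error bound when $g$ is convex, which feeds into case~(ii).

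The one case needing a genuine argument is case~(iv). With $g\equiv 0$, $F=f$ is $\mathcal{C}^1$, so $\partial_P F(x)=\partial_L F(x)=\{\nabla f(x)\}$ and $dist(0,\partial_L F(x))=\|\nabla f(x)\|$ for every $x$. Taking square roots in the (LPL) inequality $\tfrac12\|\nabla f(x)\|^2\geq\mu(f(x)-f(\overline{x}))$ and restricting to $\mathfrak{B}(\overline{x};\eta,\nu)\subset\mathbb{B}(\overline{x};\eta)$, where the right-hand side is nonnegative, yields $dist(0,\partial_L F(x))\geq\sqrt{2\mu}\,(F(x)-\overline{F})^{1/2}$ on $\mathfrak{B}(\overline{x};\eta,\nu)$ for every $\nu>0$ --- that is, $F$ has the KL property at $\overline{x}$ with exponent $\alpha=\tfrac12$. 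Case~(iv) then reduces to case~(i) (equivalently, Theorem~\ref{kl} with $\alpha=\tfrac12$), delivering $\gamma=1$.

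I expect the main obstacle to be bookkeeping rather than depth: one must verify that the standing hypotheses of the present theorem (Assumptions~\ref{assump1} and~\ref{assump2}, together with the restrictions on $\overline{\epsilon}$, the choice of $N$, and Property~(A) that appear inside Theorem~\ref{theorem:subregularity}) line up with the hypotheses of Theorem~\ref{kl} and Theorem~\ref{theorem:subregularity} in each case, and that the shrinking of neighborhoods ($\eta\mapsto\eta/2$ in cases~(i) and~(iv), $\nu\mapsto\nu/N$ in case~(ii)) is harmless since a level-set subdifferential error bound \emph{at} $\overline{x}$ only asserts existence of some admissible radii. The single new ingredient worth isolating is the recognition that, when $g=0$, the (LPL) inequality is literally the KL property with exponent $\tfrac12$.
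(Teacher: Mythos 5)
Your proposal is correct and follows essentially the same route as the paper: cases (i)--(iii) are reduced to Theorem~\ref{kl} and Theorem~\ref{theorem:subregularity} exactly as the paper does, and for case (iv) the paper likewise observes that (LPL) is the KL property (you merely spell out the elementary square-root computation, using that $\partial_P F(x)=\{\nabla f(x)\}$ when $g=0$ and $\nabla f$ is Lipschitz, which is a harmless and correct addition). The remark about shrinking radii being immaterial for an error bound ``at $\overline{x}$'' matches the paper's implicit convention, so no gap remains.
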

\begin{proof}
(i) See the results of Section~\ref{subsec:level-set error bounds}.\\
(ii) \& (iii) See Theorem~\ref{theorem:subregularity}.\\
(iv) For this case, The (LPL) inequality implies the KL property. The assertion follows from Theorem~\ref{kl}.
\end{proof}
\begin{remark}
 For the composite optimization problem (P),
if we consider the global solution $\mathbf{X}^*$ instead of  $\overline{\mathbf{X}}_P$, then Assumption~\ref{assump4} is automaticcally satisfied.  Weak metric subregularity and the Bregman proximal error bound imply the level-set subdifferential error bound.
\end{remark}
\begin{remark}
From the definition of a level-set subdifferential error bound, suppose that $\overline{x}$ is a critical point. If $x\in\mathfrak{B}(\overline{x};\eta,\nu)$ is also a critical point, then $0\in\partial_PF(x)$ and $dist\left(x,[F\leq\overline{F}]\right)=0$. This fact follows $F(x)\leq F(\overline{x})$, which implies Assumption~\ref{assump4} is a necessary condition for level-set subdifferential error bounds to hold. We mention that Assumption~\ref{assump4} is also necessary for KL property.
\end{remark}

\begin{figure}
\begin{center}
\begin{tikzpicture}
[
>=latex,
node distance=3mm,
 ract/.style={draw=blue!50, fill=blue!5,rectangle,minimum size=6mm, very thick, font=\itshape, align=center},
 racc/.style={rectangle, align=center},
 ractm/.style={draw=red!100, fill=red!5,rectangle,minimum size=6mm, very thick, font=\itshape, align=center},
 cirl/.style={draw, fill=yellow!20,circle,   minimum size=6mm, very thick, font=\itshape, align=center},
 raco/.style={draw=green!500, fill=green!5,rectangle,rounded corners=2mm,  minimum size=6mm, very thick, font=\itshape, align=center},
 hv path/.style={to path={-| (\tikztotarget)}},
 vh path/.style={to path={|- (\tikztotarget)}},
 skip loop/.style={to path={-- ++(0,#1) -| (\tikztotarget)}},
 vskip loop/.style={to path={-- ++(#1,0) |- (\tikztotarget)}}]

        \node (a) [ractm]{\baselineskip=3pt\footnotesize level-set subdifferential error bound\\ \baselineskip=3pt\footnotesize$dist^{\gamma}\left(x,[F\leq\overline{F}]\right)\leq c_3{dist}\left(0,\partial_PF(x)\right)$};
        \node (b) [ract, below = of a, xshift=-50,yshift=-20]{\baselineskip=3pt\small level-set Bregman error bound\\ \baselineskip=3pt\footnotesize$dist^{p}\left(x,[F\leq\overline{F}]\right)\leq\theta dist\left(x,T_{D,\epsilon}(x)\right)$};
        \node (bb) [ract, right = of b, xshift=30]{\baselineskip=3pt\small BP gap condition\\ \baselineskip=3pt\footnotesize$G_{D,\epsilon}(x)\geq\mu\left(F(x)-\overline{F}]\right)^q$};
        \node (bbb) [racc, below= of b, xshift=98, yshift=26]{\baselineskip=3pt\footnotesize$g$ is\\
                                                              \baselineskip=3pt\footnotesize semi-convex};
        \node (bb1) [racc, above= of bb, xshift=-38,yshift=-10]{\baselineskip=0.1pt\footnotesize$g$ is\\
                                                     \baselineskip=0.1pt\footnotesize semi-convex};
        \node (d) [ract, right = of a]{\baselineskip=3pt\footnotesize KL\\ \baselineskip=3pt\footnotesize exponent};
        \node (ddd) [above = of d, yshift=-10]{$F=f+g$};
        \node (e) [ract, right = of d]{\baselineskip=3pt\footnotesize LPL};
        \node (f) [ract, right = of e]{\baselineskip=3pt\footnotesize LRSI};
        \node (g) [ract, right = of f]{\baselineskip=3pt\footnotesize LWSC};
        \node (gg) [above = of g, xshift=-20, yshift=-5]{$F=f$};
        \node (h) [ract, below = of g]{\baselineskip=3pt\footnotesize LESC};
        \node (i) [ract, left = of h]{\baselineskip=3pt\footnotesize LSC};
        \node (j) [ract, above = of a, xshift=-14, yshift=60]{\baselineskip=3pt\footnotesize LWSC};
        \node (jj) [ract, below = of j, xshift=10]{\baselineskip=3pt\footnotesize weak metric subregularity};
        \node (aa) [cirl, below = of j, yshift=-20]{$+$};
        \node (aba) [left = of aa, xshift=-53] {\baselineskip=3pt\footnotesize {\bf(H$_3$)}};
        \node (aba1) [racc, below= of aba, xshift=15, yshift=5]{\baselineskip=3pt\footnotesize$g$ is semi-convex};
        \node (k) [ract, left = of j]{\baselineskip=3pt\footnotesize LESC};
        \node (kk) [above= of k, xshift=48, yshift=-5]{\baselineskip=3pt\footnotesize $F=f+g$, $g$ is uniformly proximal regular};
        \node (l) [ract, left = of k]{\baselineskip=3pt\footnotesize LSC};
        \node (ll) [racc, left= of l, xshift=-10]{\baselineskip=3pt\footnotesize $f$ is};
        \node (m) [ract, right = of j, xshift=20]{\baselineskip=3pt\footnotesize $f$ is LQGG};
        \node (n) [cirl, left = of a]{$+$};
        \node (o) [ract, left  = of n]{\baselineskip=3pt\footnotesize Bregman proximal\\
                                       \baselineskip=3pt\footnotesize error bound};
        \node (o1) [ract, below = of o]{\baselineskip=3pt\footnotesize Luo-Tseng\\
                                        \baselineskip=3pt\footnotesize error bound};
        \node (o1o) [racc, above= of o1, xshift=-42, yshift=-12]{\baselineskip=3pt\footnotesize $g$ is convex};
        \node (oo) [above = of o, xshift=-22, yshift=-10]{\baselineskip=3pt\footnotesize $F=f+g$};
        \path (-2.75,-0.6) edge[->] (-2.75,-1.6)
              (3.95,-1.6) edge[->] (d)
              (b) edge[->] (bb)
              (aba) edge[->] (aa)
              (aa) edge[->] (-0.5,0.6)
              (j) edge[->] (-0.5,2.7)
              (-0.5,2.1) edge[->] (aa)
              (d) edge[->] (a)
              (e) edge[->] (d)
              (f) edge[->] (e)
              (g) edge[->] (f)
              (h) edge[->] (g)
              (i) edge[->] (h)
              (k) edge[->] (j)
              (l) edge[->] (k)
              (n) edge[->] (a)
              (aba) edge[->] (n)
              (o) edge[->] (n)
              (o1) edge[->] (o);
        \path (1.63,3.05) edge[->] (1.63,2.7)
              (ll) edge[->] (l);

        \draw[dotted,very thick] (4.95,-1.4)--(8.9,-1.4);
        \draw[dotted,very thick] (4.95,0.4)--(8.9,0.4);
        \draw[dotted,very thick] (4.95,-1.4)--(4.95,0.4);
        \draw[dotted,very thick] (8.9,0.4)--(8.9,-1.4);

        \draw[dotted,very thick] (0.25,2.95)--(-4,2.95);
        \draw[dotted,very thick] (0.25,3.8)--(-4,3.8);
        \draw[dotted,very thick] (0.25,2.95)--(0.25,3.8);
        \draw[dotted,very thick] (-4,2.95)--(-4,3.8);
\end{tikzpicture}
\caption{Sufficient conditions for the level-set subdifferential error bound \big{(}In this Figure, conditions of semiconvexity for $g$ can be replaced by uniformly proximal regularity with $\rho$ and $\eta$ and $x\in\mathfrak{B}(\overline{x};\frac{\eta}{2},\frac{\nu}{N})$ with $N\geq\frac{2\overline{\epsilon}\nu}{m-\overline{\epsilon}L}/\left(\frac{\eta}{2}\right)^2$ satisfies Property (A)\big{)}}\label{fig:2}
\end{center}
\end{figure}
\noindent {\bf Acknowledgments:} We are grateful for valuable feedbacks  from  Lei Zhao and Minghua Li   on earlier versions of this manuscript. Figures were computer-drawn by Lei Zhao, who also
in numerous  other ways generously gave  LaTex  technical support.


\begin{thebibliography} {99}
\bibitem{Attouch13}
Attouch, H., Bolte, J., \& Svaiter, B. F. (2013). Convergence of descent methods for semi-algebraic and tame problems: proximal algorithms, forward-backward splitting, and regularized Gauss-Seidel methods. {\sl Mathematical Programming, 137}(1-2), 91-129.
\bibitem{Aze2017}
Az\'e, D., \& Corvellec, J. N. (2017). Nonlinear error bounds via a change of function. {\sl Journal of Optimization Theory and Applications, 172}(1), 9-32.
\bibitem{Bacak10}
Ba\v{c}ak, M., Borwein, J. M., Eberhard, A., \& Mordukhovich, B. S. (2010). Infimal convolutions and Lipschitzian properties of subdifferentials for prox-regular functions in Hilbert spaces. {\sl Journal of Convex Analysis, 17}(3-4), 737-763.
\bibitem{Banjac18}
Banjac, G., K. Margellos and P. J. Goulart (2018). On the convergence of a regularized Jacobi algorithm for convex optimization. {\sl IEEE Tranations on Automatic control, 63}(4), 1113-1119.
\bibitem{BeckTeboulle09}
Beck, A., \& Teboulle, M. (2009). A fast iterative shrinkage-thresholding algorithm for linear inverse problems. {\sl SIAM journal on imaging sciences, 2}(1), 183-202.
\bibitem{Berge97}
Berge, C. (1997). {\sl Topological Spaces: including a treatment of multi-valued functions, vector spaces, and convexity.} Courier Corporation.
\bibitem{Bernard05}
Bernard, F., \& Thibault, L. (2005). Uniform prox-regularity of functions and epigraphs in Hilbert spaces. {\sl Nonlinear analysis, 60} 187-207.
\bibitem{Bolte2010}
Bolte, J., Daniilidis, A., Ley, O., \& Mazet, L. (2010). Characterizations of {\L}ojasiewicz inequalities: subgradient flows, talweg, convexity. {\sl Transactions of the American Mathematical Society, 362}(6), 3319-3363.
\bibitem{Bonettini2016}
Bonettini, S., Loris, I., Porta, F., \& Prato, M. (2016). Variable metric inexact line-search-based methods for nonsmooth optimization. {\sl SIAM journal on optimization, 26}(2), 891-921.
\bibitem{Bonettini2016computing}
Bonettini, S., Porta, F., \& Ruggiero, V. (2016). A variable metric forward-backward method with extrapolation. {\sl SIAM Journal on Scientific Computing, 38}(4), A2558-A2584.

\bibitem{BuD02}
J.~V. Burke and S.~Deng.
\newblock Weak sharp minima revisited, part I: Basic theory.
\newblock {\em Control and Cybernetics}, 31:439--469, 2002.

\bibitem{BuF93}
J.~V. Burke and M.~C. Ferris.
\newblock Weak sharp minima in mathematical programming.
\newblock {\em SIAM J.~Control and Optim.}, 31:1340---1359, 1993.

\bibitem{CandesTao05}
Cand\'es, E. J., \& Tao, T. (2005). Decoding by linear programming. {\sl IEEE transactions on information theory, 51}(12), 4203-4215.
\bibitem{Cohen17}
Carpentier, P. et Cohen, G. (2017). {\sl Decomposition-coordination en optimisation deterministe et stochastique, oding by linear programming}. Springer.
\bibitem{Chouzenous2014}
Chouzenoux, E., Pesquet, J. C., \& Repetti, A. (2014). Variable metric forward–backward algorithm for minimizing the sum of a differentiable function and a convex function. {\sl Journal of Optimization Theory and Applications, 162}(1), 107-132.
\bibitem{Clarke98} F.H. Clarke, Y.S. Ledyaev, R.J, Stern, \& P.R. Wolenski (1998) {\sl Nonsmooth Analysis and Control Theory}. Springer-Verlag New York, Inc.
\bibitem{Cohen80}
Cohen, G. (1980). Auxiliary problem principle and decomposition of optimization problems. {\sl Journal of Optimization Theory and Applications, 32}(3), 277-305.
\bibitem{CohenZ}
Cohen, G., \& D. Zhu (1984). Decomposition and coordination methods in large scale optimization problems: The nondifferentiable case and the use of augmented Lagrangians. Adv. in Large Scale Systems, 1, 203-266.

\bibitem{Cro78}
L. Cromme.
\newblock Strong Uniqueness.
\newblock {\em Numerische Mathematik},
29(1978), 179--193.

\bibitem{Donoho06}
Donoho, D. L. (2006). Compressed sensing. {\sl IEEE Transactions on information theory, 52}(4), 1289-1306.

\bibitem{DoR2009}
Dontchev, A., and Rockafellar, R.T (2009) {\sl Implicit Functions and Solution Mappings}. Springer Science \& Business Media.

\bibitem{Lewis2018}
Drusvyatskiy, D., \& Lewis, A. S. (2018). Error bounds, quadratic growth, and linear convergence of proximal methods. {\sl Mathematics of Operations Research}.
\bibitem{Fan2001}
Fan, J., \& Li, R. (2001). Variable selection via nonconcave penalized likelihood and its oracle properties. {\sl Journal of the American statistical Association, 96}(456), 1348-1360.
\bibitem{Frankel2015}
Frankel, P., Garrigos, G., \& Peypouquet, J. (2015). Splitting methods with variable metric for Kurdyka-{\L}ojasiewicz functions and general convergence rates. {\sl Journal of Optimization Theory and Applications, 165}(3), 874-900.
\bibitem{DC2018}
Gotoh, J. Y., Takeda, A., \& Tono, K. (2018). DC formulations and algorithms for sparse optimization problems. {\sl Mathematical Programming, 169}(1), 141-176.
\bibitem{Hoffman1952}
Hoffman, A. J. (1952). On approximate solutions of systems of linear inequalities. {\sl Journal of Research of the National Bureau of Standards, 49}, 263-265).
\bibitem{Schmidt2016}
Karimi, H., Nutini, J., \& Schmidt, M. (2016, September). Linear convergence of gradient and proximal-gradient methods under the Polyak-{\L}ojasiewicz condition. In {\sl Joint European Conference on Machine Learning and Knowledge Discovery in Databases} (pp. 795-811). Springer, Cham.
\bibitem{LiPong18}
Li, G., \& Pong, T. K. (2018). Calculus of the exponent of Kurdyka-{\L}ojasiewicz inequality and its applications to linear convergence of first-order methods. {\sl Foundations of computational mathematics, 18}(5), 1199-1232.
\bibitem{LionsMercier1979}
Lions, P. L., \& Mercier, B. (1979). Splitting algorithms for the sum of two nonlinear operators. {\sl SIAM Journal on Numerical Analysis, 16}(6), 964-979.

\bibitem{Loj63}
S. Lojasiewicz.
\newblock  A topological property of real analytic subsets (in French).
\newblock {\em Coll du. CNRS. Les equations
aux derivees partielles},
pages 87-89, 1963.

\bibitem{LuoTseng92}
Luo, Z. Q., \& Tseng, P. (1992). Error bound and convergence analysis of matrix splitting algorithms for the affine variational inequality problem. {\sl SIAM Journal on Optimization, 2}(1), 43-54.
\bibitem{Mordukhovich}
Mordukhovich, B. S. (2006). {\sl Variational analysis and generalized differentiation I: Basic theory (Vol. 330)}. Springer Science \& Business Media.
\bibitem{Necoara2018}
Necoara, I., Nesterov, Y., \& Glineur, F. (2018). Linear convergence of first order methods for non-strongly convex optimization. {\sl Mathematical Programming}, 1-39.
\bibitem{Nesterov13}
Nesterov, Y. (2013). {\sl Introductory lectures on convex optimization: A basic course} (Vol. 87). Springer Science \& Business Media.
\bibitem{Ortega}
Ortega, J. M., \& Rheinboldt, W. C. (1970). {\sl Iterative solution of nonlinear equations in several variables} (Vol. 30). Siam.

\bibitem{Pan97}
J.~S. Pang.
\newblock Error bounds in mathematical programming.
\newblock {\em Mathematical Programming}, 79:299---332, 1997.

\bibitem{Pol63}
B. T. Polyak.
\newblock Gradient methods for minimizing functionals (in Russian).
\newblock {\em  Zh. Vychisl. Mat. Mat. Fiz.,}
pages 643--653, 1963

\bibitem{Pol79}
B.~T. Polyak.
\newblock Sharp Minima.
\newblock {\em Institute of Control Sciences Lecture Notes}, Moscow,
USSR, 1979; Presented at the IIASA Workshop on Generalized Lagrangians and
Their Applications, IIASA, Laxenburg, Austria, 1979.

\bibitem{Polyak87}
Polyak, B. T. (1987). Introduction to optimization. Translations series in mathematics and engineering. Optimization Software.

\bibitem{Rob81}
Robinson, S. M.
\newblock Some continuity properties of polyhedral multifunctions.
\newblock {\em Mathematics of Operations Research} 5, 206-214, 1980.
\bibitem{Rockafellar}
Rockafellar, R. T., \& Wets, R. J. B. (2009). {\sl Variational analysis} (Vol. 317). Springer Science \& Business Media.
\bibitem{Schmidt11}
Schmidt, M., Roux, N. L., \& Bach, F. R. (2011). Convergence rates of inexact proximal-gradient methods for convex optimization. {\sl In Advances in neural information processing systems} (pp. 1458-1466).
\bibitem{Schopfer2016}
Sch\"opfer, F. (2016). Linear convergence of descent methods for the unconstrained minimization of restricted strongly convex functions. {\sl SIAM Journal on Optimization, 26}(3), 1883-1911.

\bibitem{StW99}
M.~Studniarski and D.~E. Ward.
\newblock Weak sharp minima: characterizations and sufficient conditions.
\newblock {\em SIAM J. Control and Optim.}, 38:219---236, 1999.

\bibitem{Tibshirani1996}
Tibshirani, R. (1996). Regression shrinkage and selection via the lasso. {\sl Journal of the Royal Statistical Society. Series B (Methodological)}, 267-288.
\bibitem{Tseng2009}
Tseng, P., \& Yun, S. (2009). A coordinate gradient descent method for nonsmooth separable minimization. {\sl Mathematical Programming, 117}(1-2), 387-423.
\bibitem{Vial1983}
Vial, J. P. (1983). Strong and weak convexity of sets and functions. {\sl Mathematics of Operations Research, 8}(2), 231-259
\bibitem{Ye18}
Wang, X., Ye, J., Yuan, X., Zeng, S., \& Zhang, J. (2018). Perturbation techniques for convergence analysis of proximal gradient method and other first-order algorithms via variational analysis. {\sl arXiv preprint arXiv:1810.10051}.
\bibitem{WenChenPong}
Wen, B., Chen, X., \& Pong, T. K. (2017). Linear convergence of proximal gradient algorithm with extrapolation for a class of nonconvex nonsmooth minimization problems. {\sl SIAM Journal on Optimization, 27}(1), 124-145.
\bibitem{YeJ18}
Jane, J. Y., Yuan, X., Zeng, S., \& Zhang, J. (2018). Variational analysis perspective on linear convergence of some first order methods for nonsmooth convex optimization problems.
\bibitem{Zhang2010}
Zhang, C. H. (2010). Nearly unbiased variable selection under minimax concave penalty. {\sl The Annals of statistics, 38}(2), 894-942.
\bibitem{Zhang2019}
Zhang, H. (2019). New analysis of linear convergence of gradient-type methods via unifying error bound conditions. {\sl Mathematical programming, published online}, Jan. 2019.
\bibitem{Zhu94}
Zhu, D. L., \& Marcotte, P. (1994). An extended descent framework for variational inequalities. {\sl Journal of Optimization Theory and Applications, 80}(2), 349-366.
\bibitem{Zhu95}
Zhu, D. L., \& Marcotte, P. (1995). Coupling the auxiliary problem principle with descent methods of peudoconvex programming. {\sl European journal of operations research, 83}, 670-685.
\bibitem{Zhu98}
Zhu, D. L., \& Marcotte, P. (1998). Convergence properties of feasible descent methods for solving variational inequalities in Banach spaces. {\sl Computational Optimization and Applications, 10}(1), 35-49.
\end{thebibliography}
\end{document}